\newtheorem{theorem}{Theorem}[section]
\newtheorem{lemma}[theorem]{Lemma}
\newtheorem{remark}[theorem]{Remark}
\definecolor{lightblue}{rgb}{0,0,0.8}
\definecolor{darkgreen}{rgb}{0,0.5,0}
\definecolor{darkgrey}{rgb}{0.35,0.35,0.35}
\definecolor{lightgrey}{rgb}{0.925,0.925,0.925}
\providecommand{\RR}{\mathbb{R}}
\providecommand{\ZZ}{\mathbb{Z}}
\providecommand{\NN}{\mathbb{N}}
\providecommand{\CC}{\mathbb{C}}
\DeclareMathOperator{\supp}{supp}
\newcommand{\dif}[1]{\mathop{\mathrm{d}#1}}
\renewcommand{\i}{\operatorname{i}}
\newcommand{\e}{\operatorname{e}}
\newcommand{\cC}{\mathcal{C}}
\newcommand{\cF}{\mathcal{F}}
\newcommand{\cG}{\mathcal{G}}
\newcommand{\cH}{\mathcal{H}}
\newcommand{\cI}{\mathcal{I}}
\newcommand{\cSH}{\mathcal{SH}}
\newcommand{\tT}{{\text{\tiny$\operatorname{T}$}}}
\newcommand{\cone}{\cC}
\newcommand{\fftn}{\text{\texttt{fft2}}}
\newcommand{\ifftn}{\text{\texttt{ifft2}}}
\title{Fast Finite Shearlet Transform: a tutorial}
\author{
  Sören Häuser
  \thanks{
  Fachbereich für Mathematik, 
  Technische Universität Kaiserslautern, 
  Paul-Ehrlich-Str. 31,
  67663 Kaiserslautern, 
  Germany,
  \{haeuser,steidl\}@mathematik.uni-kl.de  
  }
  \and
  Gabriele Steidl
  \footnotemark[1]
}
\begin{document}

\maketitle

\tableofcontents

\begin{abstract}

\end{abstract}

\section{Introduction} \label{sec:introduction}
Directional multiscale representation of images to address curved singularities has received much attention in harmonic analysis in the last 25 years.
In particular, shearlets \cite{GKL06} and curvelets \cite{CDDY06} provide an optimally sparse approximation of carton-like images, that is
\begin{equation*}
  \lVert f - f_N\rVert_{L_2}^2 \leq C  N^{-2} (\log N)^3 
\quad 
  \text{as } N \to \infty,
\end{equation*}
where $f_N$ is the nonlinear shearlet approximation of a function $f$
from this class obtained by taking the $N$ largest shearlet coefficients in absolute value.
Shearlets possess a uniform construction for both the continuous and the discrete setting.
They further stand out since they stem
from a square-integrable group representation
\cite{DKMSST08} and have the corresponding useful mathematical properties.
Moreover, similarly as wavelets are related to Besov spaces via atomic decompositions,
shearlets correspond to certain function spaces, the so-called shearlet coorbit spaces
\cite{DKST09}.

Figure~\ref{fig:edgeDetection} illustrates the directional information contained in the shearlet coefficients. 
Shearlets have been applied to a wide field of image processing tasks,
e.g.,
denoising \cite{ELC09,DCGG11}, 
inversion of the Radon transform \cite{CEGL10,ECL09},
inverse halftoning \cite{EPH09},
deconvolution \cite{PEH09},
geometric separation \cite{DK09},
inpainting \cite{KKL13}
and many more.
A detailed summary can be found in \cite{EL12}.
In \cite{HS13} the authors show how the directional information encoded by the shearlet transform
can be used in image segmentation. 
To this end, we introduce a simple discrete shearlet transform which translates the shearlets over the full grid
at each scale and for each direction. Using the FFT this transform can be still realized in a fast way.

This tutorial explains the details behind the \textsc{Matlab}-implementation of the transform
and shows how to apply the transform. The software is available for free under the GPL-license at
\begin{center}
	\url{http://www.mathematik.uni-kl.de/imagepro/software/}
\end{center}
In analogy with other transforms we named the software \emph{FFST}---\textbf{F}ast \textbf{F}inite \textbf{S}hearlet \textbf{T}ransform.
The package provides a fast implementation of the finite (discrete) shearlet transform.

For shearlets there are currently three toolboxes available.
They are
\begin{description}
  \item[Local Shearlet Toolbox\footnotemark] 
    \footnotetext{\url{http://www.math.uh.edu/~dlabate/software.html}}
    developed by  Easley, Labate and Lim. This was the first shearlet implementation, for details see \cite{ELL08}.
  \item[ShearLab\footnotemark]\footnotetext{\url{http://www.shearlab.org}}
    developed by Donoho, Kutyniok, Lim, Shahram, Zhuang and Reisenhofer. This package consists of three different implementations: One is implemented on pseudo-polar grids, one on Cartesian grids and the newest one using compactly supported shearlets, for details see \cite{KSD09,Lim10,KSZ12,KLR14}.
  \item[Fast Finite Shearlet Transform (FFST)\footnotemark] 
    \footnotetext{\url{http://www.mathematik.uni-kl.de/imagepro/software/}}
    developed by the authors. The first fully finite and translation invariant shearlet implementation, described in \cite{HS13,Hae14} and in this tutorial.
\end{description}
\begin{figure}[ht]
  \centering
  \subfloat
  [Geometric shapes with different edge orientations.]
  {
    \label{fig:formen}
    \imageWithBorder{0.3\textwidth}{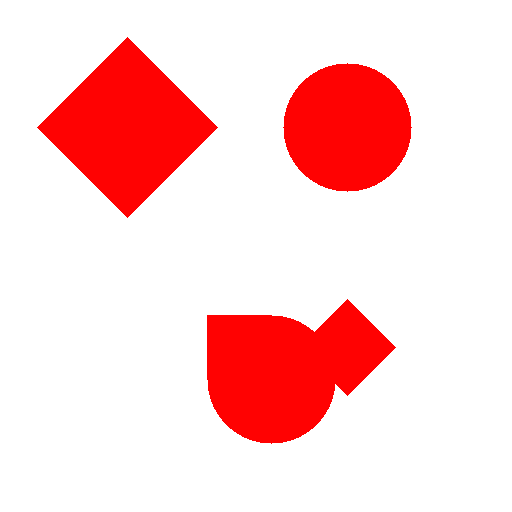}
  }
  \hspace{0.2cm}
  \subfloat
  [Sum of shearlet coefficients for $j=3$ ($a=\frac{1}{64}$) over all $k$ ($s$).]
  {
    \label{fig:formen_all_directions} 
    \imageWithBorder{0.3\textwidth}{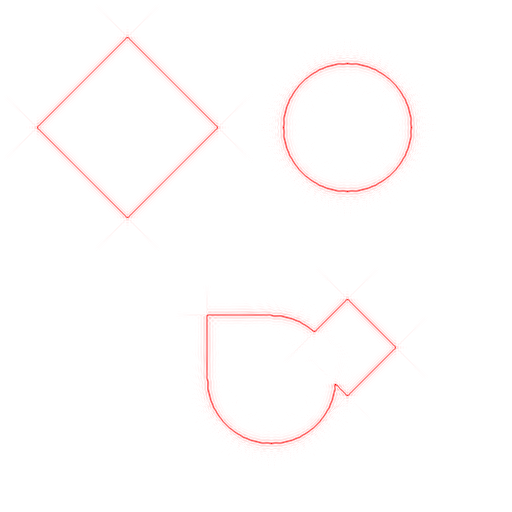}
  }
  \\
  \subfloat
  [Shearlet coefficients for $j=3$ ($a=\frac{1}{64}$) and $k=-8$ ($s = -1$).]
  {
    \label{fig:formen_one_correct_direction}
    \imageWithBorder{0.3\textwidth}{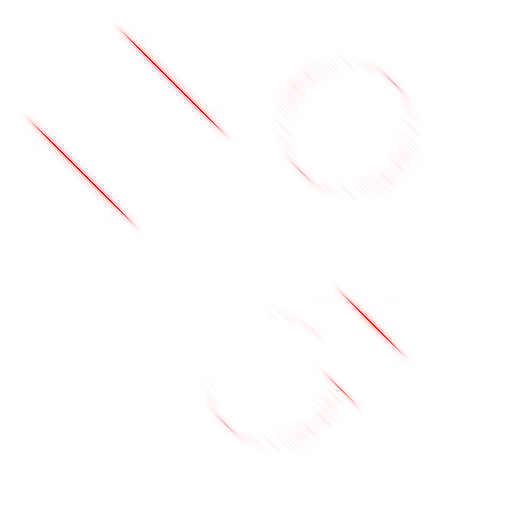}
  }
  \hspace{0.2cm}
  \subfloat
  [Shearlet coefficients for $j=3$ ($a=\frac{1}{64}$) and $k=6$ ($s = \tfrac{3}{4}$).]
  {
    \label{fig:formen_one_false_direction}
    \imageWithBorder{0.3\textwidth}{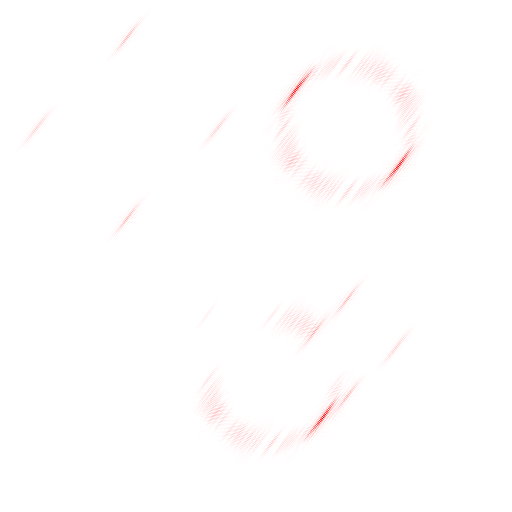}
  }
  \caption{Shearlet coefficients can detect edges with different orientations.}
  \label{fig:edgeDetection}
\end{figure}

Recall that the \emph{Fourier transform} $\mathcal{F}\colon L_2(\RR^2)\to L_2(\RR^2)$ and the inverse transform are defined by
\begin{alignat*}{4}
  \mathcal{F}f(\omega)
&=
  \hat{f}(\omega)
&:=&
  \int_{\RR^2} f(t)e^{-2\pi i\langle\omega,t\rangle} dt,
\\
  \mathcal{F}^{-1}\hat{f}(\omega)
&=
  f(t)
&=&
  \int_{\RR^2} \hat{f}(\omega)e^{2\pi i\langle \omega,t\rangle} d\omega.
\end{alignat*}

This tutorial is organized as follows:
In Section \ref{sec:closer_look_continuous_shearlet_transform} we introduce the continuous shearlet transform
and prove the properties of the involved shearlets. 
We follow in Section \ref{sec:computation_discrete_shearlet_transform} the path via the continuous shearlet transform,
its counterpart on cones and finally its discretization on the full grid to obtain the
translation invariant discrete shearlet transform.
This is different to other implementations as, e.g., in ShearLab,
see \cite{KSZ12}.
Our discrete shearlet transform can be efficiently computed by the fast Fourier transform (FFT).
The discrete shearlets constitute a Parseval frame of the finite Euclidean space such that
the inversion of the shearlet transform can be simply done by applying the adjoint transform.
The second part of the section covers the implementation and installation details and provides
some performance measures.

\section{Closer Look at the Continuous Shearlet Transform in \texorpdfstring{$\RR^2$}{R 2}} 
\label{sec:closer_look_continuous_shearlet_transform}
In this section we combine some mostly well-known results from different authors. 
To make this tutorial self-contained and to obtain a complete documentation we also include the proofs. 
The functions are taken from \cite{Mey01,Mal08}. 
The construction of the shearlets is based on ideas from \cite{ELL08} and \cite{KLZ11}. 
The shearlet transform and the concept of shearlets on the cone were introduced in \cite{GKL06}.

\subsection{Some Functions and their Properties}
To define usable shearlets we need functions with special properties. 
We begin with defining these functions and prove their necessary properties. 
The results will be used later. 

We start by defining an auxiliary function $v\colon\RR\to\RR$ as
\begin{equation}\label{eq:v}
  v(x) 
:=
  \begin{cases}
    0                       & \text{for } x< 0,           \\
    35x^4-84x^5+70x^6-20x^7 & \text{for } 0\leq x \leq 1, \\
    1                       & \text{for } x>1.
  \end{cases}
\end{equation}
This function was proposed by Y. Meyer in \cite{Mey01,Mal08}, see Remark~\ref{remark:construction_v} for more information on the construction of $v$. 
Other choices of $v$ are possible, in \cite{KSZ12} the simpler function
\begin{equation*}
  \widetilde{v}(x) 
=
  \begin{cases}
    0          &\text{for } x< 0,                    \\
    2x^2       &\text{for } 0\leq x\leq\frac{1}{2},  \\
    1-2(1-x)^2 &\text{for } \frac{1}{2}\leq x\leq 1, \\
    1          &\text{for } x>1,
  \end{cases}
\end{equation*}
was chosen.

As we will see, the useful properties of $v$ for our purposes are its symmetry around $\left(\frac{1}{2},\frac{1}{2}\right)$ 
and the values at $0$ and $1$ with increase in between.
A plot of $v$ is shown in Figure~\ref{fig:meyerAux}.

Next we define the function $b:\RR\to\RR$ with
\begin{equation}\label{eq:b}
  b(\omega)
:=
  \begin{cases}
    \sin\left(\frac{\pi}{2}v(\lvert\omega\rvert-1)\right)                       & \text{for }1\leq\lvert\omega\rvert\leq 2, \\
    \cos\left(\frac{\pi}{2}v\left(\frac{1}{2}\lvert\omega\rvert-1\right)\right) & \text{for }2 < \lvert\omega\rvert\leq 4,  \\
    0                                                                           & \text{otherwise}.
  \end{cases}
\end{equation}
Note that $b$ is symmetric, positive, real-valued and $\supp b = [-4,-1]\cup[1,4]$. 
We further have that $b(\pm 2) = 1$. 
A plot of $b$ is shown in Figure~\ref{fig:meyerHelper}.

\begin{figure}[hptb]
  \centering
  \subfloat[$v(x)$.]{\label{fig:meyerAux}\includegraphics[width=0.4\textwidth]{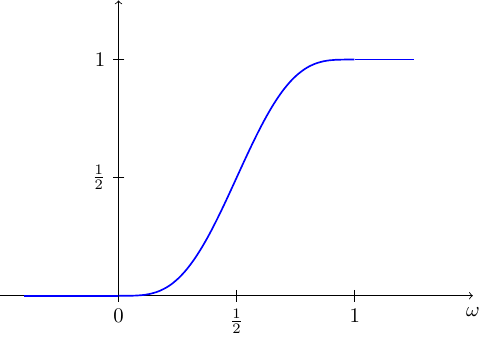}} 
  \hspace{1cm}
  \subfloat[solid: $b(\omega)$, dashed: $b(2\omega)$.]{
    \label{fig:meyerHelper}
    \raisebox{0.05in}{
      \includegraphics[height=0.27\textwidth]{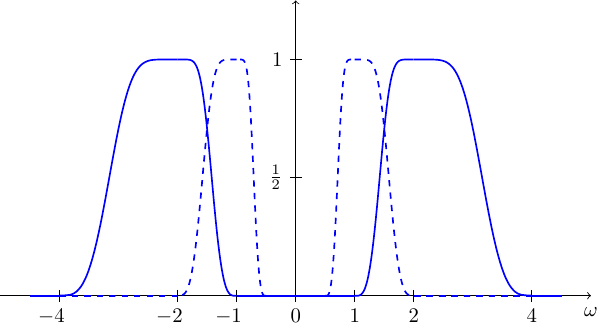}
    }
  }
  \caption{The two auxiliary functions $v$ (see \eqref{eq:v}) and $b$ (see \eqref{eq:b}).}
\end{figure}

Because of the symmetry we restrict ourselves in the following analysis to the case $\omega>0$.
Let $b_j:=b(2^{-j}\,\cdot\,)$, $j\in\NN_0$, thus, $\supp b_j = 2^j[1,4] = [2^{j},2^{j+2}]$ and $b_j(2^{j+1}) = 1$.
Observe that $b_j$ is increasing for $\omega\in [2^j,2^{j+1}]$ and decreasing for $\omega\in [2^{j+1},2^{j+2}]$.
Obviously all these properties carry over to $b_j^2$.
These facts are illustrated in the following diagram 
\begin{equation*}
\begin{array}{c|ccccccc}
  \omega  & 2^j &         & 2^{j+1} &          & 2^{j+2} &          & 2^{j+3} \\
  b_j     & 0   &\nearrow & 1       & \searrow & 0       &          &         \\
  b_{j+1} &     &         & 0       & \nearrow & 1       & \searrow & 0       \\
\end{array}
\end{equation*}
where $\nearrow$ stands for the increasing and $\searrow$ for the decreasing function.

For $j_1 \neq j_2$ the overlap between the support of $b_{j_1}^2$ and $b_{j_2}^2$ is empty except for $\lvert j_1-j_2\rvert = 1$.
Thus, for $b_j^2$ and $b_{j+1}^2$ we have that $\supp b_j^2 \cap \supp b_{j+1}^2 = [2^{j+1},2^{j+2}]$.
In this interval $b_j^2$ is decreasing with 
$
  b_j^2 
= 
  \cos^2\left(\frac{\pi}{2}v\left(\frac{2^{-j}}{2}\lvert\omega\rvert-1\right)\right)
$
and $b_{j+1}^2$ is increasing with 
$
  b_{j+1}^2 
= 
  \sin^2\left(\frac{\pi}{2}v(2^{-(j+1)}\lvert\omega\rvert-1)\right)
$.
Their sum in this interval is
\begin{equation*}
  b_j^2(\omega) + b_{j+1}^2(\omega)
=
  \cos^2\left(\frac{\pi}{2}v(2^{-j-1}\lvert\omega\rvert-1)\right)
  +
  \sin^2\left(\frac{\pi}{2}v(2^{-j-1}\lvert\omega\rvert-1)\right)
=
  1.
\end{equation*}

Hence, we can summarize
\begin{equation*}
  (b_j^2 + b_{j+1}^2)(\omega)
=
  \begin{cases}
    b_j^2     & \text{for }\omega< 2^{j+1},               \\
    1         & \text{for }2^{j+1}\leq\omega\leq 2^{j+2}, \\
    b_{j+1}^2 & \text{for }\omega>2^{j+2}.
  \end{cases}
\end{equation*}
Consequently, we have the following lemma.
\begin{lemma}\label{lemma:propertySumbj}
For $b_j$ defined as above, the relations
\begin{equation*}
  \sum_{j=-1}^\infty 
  b_j^2(\omega) 
= 
  \sum_{j=-1}^\infty 
  b^2(2^{-j}\omega) 
= 
  1
\quad\text{for }
  \lvert\omega\rvert\geq 1
\end{equation*}
and
\begin{equation}\label{eq:OverlapPsi1}
  \sum_{j=-1}^\infty b_j^2(\omega)
=
\begin{cases}
  0                                            & \text{for }\lvert\omega\rvert\leq\frac{1}{2}, \\
  \sin^2\left(\frac{\pi}{2}v(2\omega-1)\right) & \text{for }\frac{1}{2}<\lvert\omega\rvert<1,  \\
  1                                            & \text{for }\lvert\omega\rvert\geq 1
\end{cases}
\end{equation}
hold true.
\end{lemma}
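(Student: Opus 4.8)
The plan is to prove the two displayed identities by exploiting the telescoping/partition-of-unity structure already established in the diagram above. The key observation is that, thanks to the disjoint-support analysis, for any fixed $\omega$ with $|\omega| \geq 1$ at most two consecutive terms $b_j^2(\omega)$ and $b_{j+1}^2(\omega)$ in the sum $\sum_{j=-1}^\infty b_j^2(\omega)$ are nonzero; all infinite sums are therefore really finite sums of one or two terms, and no convergence issues arise. I would first restrict to $\omega > 0$ as the text suggests, using symmetry of $b$ to extend to $\omega < 0$ at the end.

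For the first identity, I would argue as follows. Fix $\omega \geq 1$ and pick $k \in \NN_0$ with $2^{k} \leq \omega \leq 2^{k+1}$ (handling the boundary cases $\omega = 2^k$ separately or noting the values match). Using $\supp b_j = [2^j, 2^{j+2}]$, only $b_{k-1}$ and $b_k$ can be nonzero at $\omega$ (with $b_{k-1}$ meaning $b_{-1} = b(2\cdot)$ when $k=0$, which is why the sum starts at $j=-1$ and why $|\omega| \geq 1$ rather than $|\omega| \geq 2$ is the right threshold). Then $\sum_{j=-1}^\infty b_j^2(\omega) = b_{k-1}^2(\omega) + b_k^2(\omega)$, and on the overlap interval $[2^k, 2^{k+1}]$ the computation already carried out in the excerpt — $\cos^2(\tfrac{\pi}{2}v(\cdot)) + \sin^2(\tfrac{\pi}{2}v(\cdot)) = 1$ — gives exactly $1$. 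The identity $b(2^{-j}\omega) = b_j(\omega)$ is just the definition of $b_j$, so the middle expression in the lemma is the same sum rewritten.

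For the second identity, only the regime $|\omega| < 1$ is new, since $|\omega| \geq 1$ is covered by the first part. For $\omega \leq \tfrac12$: every $b_j$ with $j \geq -1$ is supported in $[2^j, 2^{j+2}] \subseteq [\tfrac12, \infty)$, and in fact $b_{-1} = b(2\cdot)$ is supported on $[\tfrac12, 2]$, so at $\omega \leq \tfrac12$ the only possibly-nonzero term is $b_{-1}(\omega)$, which vanishes because $b(2\omega) = 0$ for $|2\omega| < 1$, i.e.\ $|\omega| < \tfrac12$; the endpoint $\omega = \tfrac12$ gives $b(1) = \sin(\tfrac{\pi}{2}v(0)) = 0$. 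For $\tfrac12 < \omega < 1$: the only term whose support contains $\omega$ is $b_{-1}$, so the sum equals $b_{-1}^2(\omega) = b^2(2\omega) = \sin^2(\tfrac{\pi}{2}v(|2\omega|-1)) = \sin^2(\tfrac{\pi}{2}v(2\omega-1))$, using the first branch of the definition of $b$ since $1 \leq 2\omega \leq 2$. Finally I would invoke symmetry of $b$ (hence of each $b_j^2$) to replace $\omega$ by $|\omega|$ throughout.

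The only mildly delicate point — the "main obstacle", though it is minor — is bookkeeping at the dyadic breakpoints $\omega = 2^k$ and the left endpoint $\omega = \tfrac12$, where two descriptions of the sum formally overlap; one must check the one-sided limits agree (they do, since $v(0)=0$, $v(1)=1$ make the $\sin$/$\cos$ values $0$ and $1$ line up with the adjacent constant pieces). Everything else is a direct consequence of the support bookkeeping and the Pythagorean identity already spelled out in the text preceding the lemma.
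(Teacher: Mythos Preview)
Your proposal is correct and follows essentially the same approach as the paper: locate $\omega$ in a dyadic interval, observe that at most two consecutive $b_j$ are nonzero there, and apply the Pythagorean identity $\sin^2+\cos^2=1$ already established in the text preceding the lemma. Your treatment of the second relation is in fact more detailed than the paper's, which dispatches it as ``straightforward computation''; your case analysis for $|\omega|\le\tfrac12$ and $\tfrac12<|\omega|<1$ is exactly what that computation amounts to.
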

\begin{proof}
In each interval $[2^{j+1},2^{j+2}]$ only $b_j$ and $b_{j+1}$, $j\geq -1$, are not equal to zero. Thus, it is sufficient to prove that $b_j^2+b_{j+1}^2\equiv 1$ in this interval. We get that
\begin{align*}
  (b_j^2+b_{j+1}^2)(\omega)
&=
  \quad
  b^2(
  \underbrace{
  2^{-j}\omega
  }_{
  \makebox[0mm][c]{ 
    \begin{tiny}$\in 2^{-j}[2^{j+1},2^{j+2}] = [2,4]$\end{tiny}
  }
  }
  )
  \qquad+\qquad
  b^2(
  \underbrace{
  2^{-j-1}\omega
  }_{
  \makebox[0mm][c]{ 
    \begin{tiny}
    $\in 2^{-j-1}[2^{j+1},2^{j+2}] = [1,2]$
    \end{tiny}
  }
  }
)\\
&=
  \cos^2\left(\frac{\pi}{2}v\left(\frac{1}{2}\cdot 2^{-j}\omega-1\right)\right)
  +
  \sin^2\left(\frac{\pi}{2}v(2^{-j-1}\omega-1)\right)
  \\
&=
  \cos^2\left(\frac{\pi}{2}v(2^{-j-1}\omega-1)\right)
  +
  \sin^2\left(\frac{\pi}{2}v(2^{-j-1}\omega-1)\right)
  \\
&=
  1.
\end{align*}
The second relation follows by straightforward computation.
\end{proof}

We define the function $\psi_1\colon\RR\to\RR$ via its Fourier transform as
\begin{equation}\label{eq:Psi1}
  \hat{\psi}_1(\omega) := \sqrt{b^2(2\omega) + b^2(\omega)}.
\end{equation}
Figure~\ref{fig:Psi1} shows the function $\hat{\psi}_1$. 
The following theorem states an important property of $\psi_1$.
\begin{theorem}\label{thm:PropertyPsi1}
  The above defined function $\hat{\psi}_1$ has 
  $\supp\hat{\psi}_1 = [-4,-\frac{1}{2}]\cup[\frac{1}{2},4]$ 
  and fulfills
  \begin{equation*}
    \sum_{j\geq 0}
    \lvert \hat{\psi}_1(2^{-2j}\omega)\rvert^2 
  = 
    1\quad\text{for }\lvert\omega\rvert>1.
  \end{equation*}
\end{theorem}
%
\begin{proof}
The assumption on the support follows from the definition of $b$. For the sum we have
\begin{equation*}
  \sum_{j\geq 0}\lvert\hat{\psi}_1(2^{-2j}\omega)\rvert^2
=
  \sum_{j=0}^\infty
  b^2(2\cdot 2^{-2j}\omega) + b^2(2^{-2j}\omega)
=
  \sum_{j=0}^\infty
  b^2(
    2^{-2j+1}
    \omega
  )
  +
  b^2(
    2^{-2j}\omega
  )
\end{equation*}
where $-2j+1 \in \{+1, -1, -3,\ldots\}$ (odd) and $-2j \in \{ 0,-2,-4,\ldots \}$ (even).
Thus, by Lemma~\ref{lemma:propertySumbj}, we get
\begin{equation*}
  \sum_{j\geq 0}\lvert\hat{\psi}_1(2^{-2j}\omega)\rvert^2
=
  \sum_{j=-1}^\infty
  b^2(2^{-j}\omega)
=
  1.
  \qedhere
\end{equation*}
\end{proof}

By \eqref{eq:OverlapPsi1} we have that
\begin{equation}\label{eq:OverlapPsi1_2}
 \sum_{j\geq 0}
  \lvert\hat{\psi}_1(2^{-2j}\omega)\rvert^2
=
\begin{cases}
  0                                            &\text{for }\lvert\omega\rvert\leq\frac{1}{2}, \\
  \sin^2\left(\frac{\pi}{2}v(2\omega-1)\right) &\text{for }\frac{1}{2}<\lvert\omega\rvert<1,  \\
  1                                            &\text{for }\lvert\omega\rvert\geq 1.
\end{cases}
\end{equation}

Next we define a second function $\psi_2\colon\RR\to\RR$---again in Fourier domain---by
\begin{equation}\label{eq:Psi2}
  \hat{\psi}_2(\omega) 
:=
  \begin{cases}
    \sqrt{v(1+\omega)}&\text{for }\omega\leq 0, \\
    \sqrt{v(1-\omega)}&\text{for }\omega >0.
  \end{cases}
\end{equation}
The function $\hat{\psi}_2$ is shown in Figure~\ref{fig:Psi2}. 
Before stating a theorem about the properties of $\hat{\psi}_2$ we need the following two auxiliary lemmas.
\begin{figure}[htbp]
  \centering
  \subfloat[$\hat{\psi}_1$.]{\label{fig:Psi1}
  \raisebox{0.03in}{\includegraphics[width=0.4\textwidth]{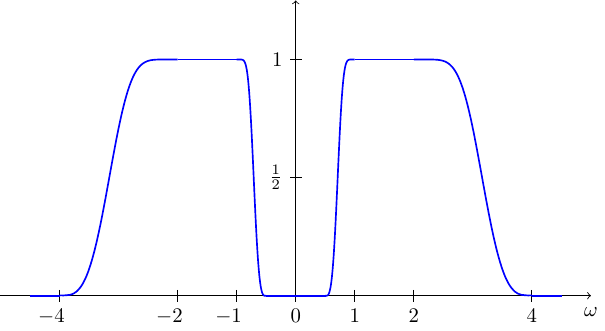}}}
  \hspace{1cm}
  \subfloat[$\hat{\psi}_2$.]{\label{fig:Psi2}\includegraphics[height=0.22\textwidth]{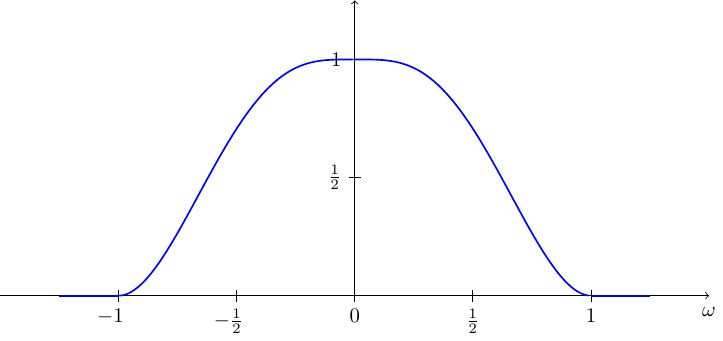}}
  \caption{The functions $\hat{\psi}_1$ (see \eqref{eq:Psi1}) and $\hat{\psi}_2$ (see \eqref{eq:Psi2}).}
\end{figure}
Recall that a function $f\colon \RR\to\RR$ is \emph{point symmetric with respect to $(a,b)$} if and only if
\begin{equation*}
  f(a+x) - b 
= 
  -f(a-x) + b 
\quad\text{for all }
  x\in\RR.
\end{equation*}
With the substitution $x+a \to x$ this is equivalent to
\begin{equation*}
  f(x) + f(2a-x) 
= 
  2b \quad\text{for all } x\in\RR.
\end{equation*}
Thus, for a function symmetric to $(0.5,0.5)$ we have that $f(x) + f(1-x) = 1$.
\begin{lemma}
  The function $v$ in \eqref{eq:v} is symmetric with respect to 
  $(0.5,0.5)$, in particular, $v(x)+v(1-x) = 1$ for all $x\in\RR$.
\end{lemma}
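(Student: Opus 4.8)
The plan is to reduce the claimed identity to a polynomial identity on the unit interval and then verify that identity by a derivative trick rather than brute-force expansion. First I would split into cases according to the position of $x$ relative to $[0,1]$. If $x<0$, then $1-x>1$, so by definition $v(x)=0$ and $v(1-x)=1$, hence $v(x)+v(1-x)=1$; the case $x>1$ is symmetric, swapping the roles of $x$ and $1-x$. This disposes of everything outside $[0,1]$ and leaves the single nontrivial range $0\le x\le 1$, in which both $x$ and $1-x$ lie in $[0,1]$ as well, so both values are given by the degree-seven polynomial $p(x):=35x^4-84x^5+70x^6-20x^7$.

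It then remains to show $p(x)+p(1-x)=1$ for $x\in[0,1]$. Instead of expanding $p(1-x)$, I would differentiate: a short computation gives
\begin{equation*}
  p'(x) = 140x^3 - 420x^4 + 420x^5 - 140x^6 = 140\,x^3(1-x)^3,
\end{equation*}
which is invariant under the substitution $x\mapsto 1-x$. Consequently
\begin{equation*}
  \frac{d}{dx}\bigl(p(x)+p(1-x)\bigr) = p'(x) - p'(1-x) = 0,
\end{equation*}
so $p(x)+p(1-x)$ is constant on $[0,1]$. Evaluating at $x=0$ gives $p(0)+p(1) = 0 + (35-84+70-20) = 1$, so the constant equals $1$, which is exactly what is needed. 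Combining the three cases yields $v(x)+v(1-x)=1$ for all $x\in\RR$, and in particular $v$ is point symmetric with respect to $(0.5,0.5)$ in the sense recalled before the statement.

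The only mild subtlety is bookkeeping at the junction points $x=0$ and $x=1$, where one must check that the piecewise definition is consistent (e.g. $p(0)=0$ matches the branch $x<0$ and $p(1)=1$ matches $x>1$); this is immediate from the values just computed, so there is no real obstacle. Everything else is the routine factorization of $p'$, which is the one computation I would actually carry out in detail.
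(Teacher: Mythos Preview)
Your argument is correct. The case split outside $[0,1]$ is the same as the paper's, and on $[0,1]$ your derivative trick works: the factorization $p'(x)=140x^3(1-x)^3$ is right, it is manifestly invariant under $x\mapsto 1-x$, so $p(x)+p(1-x)$ is constant, and the evaluation $p(0)+p(1)=1$ is correct.

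The route differs from the paper's. There the identity on $[0,1]$ is handled by brute force: one writes out $p(x)+p(1-x)$, expands each $(1-x)^k$ via the binomial theorem, and asserts that the resulting polynomial collapses to $1$. Your approach replaces that expansion by the single observation that $p'$ is symmetric under $x\mapsto 1-x$, reducing the verification to one value $p(0)+p(1)$. This is cleaner and, incidentally, is exactly the structural fact about $v$ that the paper later exploits (in the remark on smoothness, where $v$ is recovered from $v'(x)=-140\,x^3(x-1)^3$); you are just using that structure earlier. The paper's direct expansion has the minor advantage of not requiring any side observation, but the algebra it suppresses is longer than your whole argument.
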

\begin{proof}
The symmetry is obvious for $x<0$ and $x>1$. 
It remains to prove the symmetry for $0\leq x \leq 1$. 
We will see in Remark~\ref{remark:construction_v} that $v'(x) = -140 x^3(x-1)^3$.
With the fundamental theorem of calculus this implies that 
\begin{equation*}
  v(x) 
= 
  -140 \int_0^x t^3(t-1)^3 \dif{t}.
\end{equation*}
Since $v(1) = 1$ we know that $v(1) = -140 \int_0^1 t^3(t-1)^3 \dif{t} = 1$.
Next, consider
$
  v(1-x) 
= 
  -140\int_0^{1-x} t^3(t-1)^3 \dif{t}
$.
Substituting $t\to 1-t$ yields
\begin{equation*}
  v(1-x) 
= 
  140 \int_1^x(1-t^3)(-t)^3 \dif{t} 
= 
  -140 \int_x^1 t^3(t-1)^3\dif{t}.
\end{equation*}
Finally, we obtain for the sum
\begin{equation*}
  v(x) + v(1-x)
=
  -140 \int_0^x t^3(t-1)^3 \dif{t}
  -140 \int_x^1 t^3(t-1)^3\dif{t}
=
  -140 \int_0^1 t^3(t-1)^3\dif{t}
=
  1.
  \qedhere
\end{equation*}
\end{proof}
Note that $\hat{\psi}_2$ is axially symmetric to the $y$-axis.

\begin{lemma}\label{lemma:PropertyPsi2}
  The function $\hat{\psi}_2$ fulfills
  \begin{equation*}
    \hat{\psi}_2^2(\omega-1) 
  + 
    \hat{\psi}^2_2(\omega) 
  + 
    \hat{\psi}^2_2(\omega+1) 
  = 
    1
  \quad\text{for }
    \lvert\omega\rvert\leq 1.
  \end{equation*}
\end{lemma}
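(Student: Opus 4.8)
The plan is to reduce everything to the symmetry identity $v(x)+v(1-x)=1$ proved in the preceding lemma, by carefully tracking which branch of the piecewise definitions of $\hat\psi_2$ and $v$ each of the three shifted arguments falls into. First I would record the squared version of the definition \eqref{eq:Psi2}: since $v(0)=0$ and $v(1)=1$, the two cases agree at $\omega=0$, so
\begin{equation*}
  \hat\psi_2^2(\omega) =
  \begin{cases}
    v(1+\omega) & \text{for } \omega\leq 0,\\
    v(1-\omega) & \text{for } \omega\geq 0.
  \end{cases}
\end{equation*}
From this and the fact that $v\equiv 0$ on $(-\infty,0)$ one sees immediately that $\hat\psi_2^2$ is even, supported in $[-1,1]$, with $\hat\psi_2^2(0)=1$ and $\hat\psi_2^2(\pm1)=0$.

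Next I would observe that the left-hand side of the claimed identity is invariant under $\omega\mapsto-\omega$: using that $\hat\psi_2^2$ is even, $\hat\psi_2^2(-\omega-1)+\hat\psi_2^2(-\omega)+\hat\psi_2^2(-\omega+1)$ is just the same three terms in reverse order. Hence it suffices to verify the identity for $0\leq\omega\leq 1$.

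For $0\leq\omega\leq 1$ I would then evaluate each term. The argument $\omega-1$ lies in $[-1,0]$, so $\hat\psi_2^2(\omega-1)=v\bigl(1+(\omega-1)\bigr)=v(\omega)$. The argument $\omega$ is $\geq 0$, so $\hat\psi_2^2(\omega)=v(1-\omega)$. The argument $\omega+1$ lies in $[1,2]$, in particular is $>0$, so $\hat\psi_2^2(\omega+1)=v\bigl(1-(\omega+1)\bigr)=v(-\omega)$, and since $-\omega\leq 0$ this is $0$. Therefore the sum equals $v(\omega)+v(1-\omega)$, which is $1$ by the symmetry of $v$ with respect to $(0.5,0.5)$. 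Combining this with the symmetry reduction from the previous paragraph completes the proof.

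The only real care needed — the "obstacle", such as it is — is in the bookkeeping at the branch points $\omega=0$ and $\omega=\pm1$, i.e. confirming that the shifted arguments land in exactly the intervals claimed and that the two-case definition of $\hat\psi_2$ is consistent at $\omega=0$; once that is nailed down, the computation collapses to a single application of the symmetry lemma.
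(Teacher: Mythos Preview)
Your proof is correct and follows essentially the same approach as the paper: unwind the piecewise definition of $\hat\psi_2^2$, identify which branch each shifted argument lands in, and reduce the sum to $v(\omega)+v(1-\omega)=1$. The only difference is that you exploit the evenness of $\hat\psi_2^2$ to reduce to the case $0\le\omega\le1$, whereas the paper treats the two cases $0\le\omega\le1$ and $-1\le\omega<0$ separately; this is a cosmetic shortcut rather than a different argument.
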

\begin{proof}
We have
\begin{equation*}
  \hat{\psi}^2_2(\omega) 
=
  \begin{cases}
    v(1+\omega) & \text{for }\omega\leq 0, \\
    v(1-\omega) & \text{for }\omega >0.
  \end{cases}
\end{equation*}
Consequently, we get for $0\leq\omega\leq 1$ that
\begin{align*}
  \hat{\psi}^2_2(\omega-1) + \hat{\psi}^2_2(\omega) + \hat{\psi}^2_2(\omega+1)
&=
  v(1+\omega-1) + v(1-\omega) + v(1-\omega-1)
\\
&=
  v(\omega) + v(1-\omega) + \underbrace{v(-\omega)}_{=0}
=
  1,
\end{align*}
and for $-1\leq\omega<0$  we obtain similarly
\begin{align*}
  \hat{\psi}^2_2(\omega-1) + \hat{\psi}^2_2(\omega) + \hat{\psi}^2_2(\omega+1)
&=
  v(1+\omega-1) + v(1-\omega) + v(1-\omega-1)
\\
&=
  \underbrace{v(-\lvert\omega\rvert)}_{=0} + v(1-\lvert\omega\rvert) + v(\lvert\omega\rvert)
=
  1.
  \qedhere
\end{align*}
\end{proof}
It can be seen in the proof that the sum reduces in both cases to two (different) summands, 
in particular
\begin{equation*}
  1
=
  \hat{\psi}_2^2(\omega-1) + \hat{\psi}^2_2(\omega) + \hat{\psi}^2_2(\omega+1)
=
  \begin{cases}
    \hat{\psi}_2^2(\omega-1) + \hat{\psi}^2_2(\omega)   & \text{for }  0 \leq \omega \leq 1, \\
    \hat{\psi}^2_2(\omega)   + \hat{\psi}^2_2(\omega+1) & \text{for } -1 \leq \omega <    0.
  \end{cases}
\end{equation*}

With these lemmas we can prove the next theorem.
\begin{theorem}\label{thm:PropertyPsi2}
  The function $\hat{\psi}_2$ defined in \eqref{eq:Psi2} fulfills
  \begin{equation}\label{eq:PropertyPsi2}
    \sum_{k=-2^j}^{2^j}\lvert\hat{\psi}_2(k+2^j\omega)\rvert^2
  = 
    1
  \quad\text{for }
    \lvert\omega\rvert\leq 1,\ j\geq 0.
  \end{equation}
\end{theorem}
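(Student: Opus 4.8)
The plan is to identify the finite sum with the full $1$-periodization $\sum_{k\in\ZZ}|\hat\psi_2(k+2^j\omega)|^2$ and to evaluate the latter via the symmetry of $v$ (equivalently, the refined identity stated right after Lemma~\ref{lemma:PropertyPsi2}).

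First I would pin down the support. From \eqref{eq:Psi2} we have $\hat\psi_2^2(\eta)=v(1+\eta)$ for $\eta\le 0$ and $\hat\psi_2^2(\eta)=v(1-\eta)$ for $\eta>0$; since $v$ vanishes on $(-\infty,0]$ this gives $\hat\psi_2^2(\eta)=0$ whenever $|\eta|\ge 1$, i.e. $\supp\hat\psi_2\subseteq[-1,1]$. In particular, for each fixed $t$ only finitely many integers $k$ make $\hat\psi_2^2(k+t)$ nonzero.

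Second, I would establish the periodization identity $\sum_{k\in\ZZ}\hat\psi_2^2(k+t)=1$ for all $t\in\RR$. The sum is $1$-periodic in $t$ (substitute $k\mapsto k-1$), so it suffices to treat $t\in[0,1]$; there the support property leaves only $k=0$ and $k=-1$, and
\begin{equation*}
  \sum_{k\in\ZZ}\hat\psi_2^2(k+t)=\hat\psi_2^2(t)+\hat\psi_2^2(t-1)=v(1-t)+v(t)=1,
\end{equation*}
using \eqref{eq:Psi2} and the symmetry $v(x)+v(1-x)=1$.

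Finally, fix $|\omega|\le 1$ and put $t:=2^j\omega$, so $|t|\le 2^j$. If $k\ge 2^j+1$ then $k+t\ge 1$, and if $k\le-(2^j+1)$ then $k+t\le-1$; in both cases $\hat\psi_2^2(k+t)=0$ by the support property. Hence the periodization collapses onto $|k|\le 2^j$, giving
\begin{equation*}
  \sum_{k=-2^j}^{2^j}|\hat\psi_2(k+2^j\omega)|^2=\sum_{k\in\ZZ}|\hat\psi_2(k+2^j\omega)|^2=1 .
\end{equation*}
The whole argument is routine; the only point requiring a little care is the last step, where one must check that precisely the indices outside $\{-2^j,\dots,2^j\}$ drop out for $|\omega|\le 1$ (the estimate is tight at $\omega=\pm1$). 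A slightly longer alternative is induction on $j$: the base case $j=0$ is exactly Lemma~\ref{lemma:PropertyPsi2}, and in the step $j\to j+1$ one distinguishes $|\omega|\le\frac12$ from $\frac12<|\omega|\le1$, shifts the summation index by $\pm2^{j+1}$ in the second case, and again invokes the support property to discard the terms lying outside the range covered by the inductive hypothesis.
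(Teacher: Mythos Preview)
Your proof is correct and rests on the same ingredients as the paper's: the support property $\supp\hat\psi_2\subseteq[-1,1]$ and the two-term identity of Lemma~\ref{lemma:PropertyPsi2} (equivalently, $v(x)+v(1-x)=1$). The organization differs slightly. The paper works directly with the finite sum: after the substitution $\tilde\omega=2^j\omega$ it fixes $\omega^\star\in[-2^j,2^j]$, distinguishes the cases $\omega^\star\in\ZZ$, $\omega^\star\notin\ZZ$ with $\omega^\star>0$, and $\omega^\star\notin\ZZ$ with $\omega^\star<0$, identifies in each case the one or two surviving indices $k$, and evaluates via Lemma~\ref{lemma:PropertyPsi2}. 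You instead first prove the full $1$-periodization $\sum_{k\in\ZZ}\hat\psi_2^2(k+t)=1$ (reducing to $t\in[0,1]$ by periodicity) and then separately check that for $|t|\le 2^j$ the indices $|k|>2^j$ contribute nothing. Your packaging avoids the threefold case split and makes the role of periodicity explicit; the paper's version stays closer to the concrete index bookkeeping. Either way the content is the same, and your alternative induction sketch is also fine but unnecessary once the periodization is in hand.
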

\begin{proof}
With $\widetilde{\omega} := 2^j\omega$ the assertion in \eqref{eq:PropertyPsi2} becomes
\begin{equation*}
  \sum_{k=-2^j}^{2^j}\lvert\hat{\psi}_2(k+\tilde{\omega})\rvert^2
= 
  1
\quad\text{for }
  \lvert\tilde{\omega}\rvert\leq 2^j,\ j\geq 0.
\end{equation*}
For a fixed (but arbitrary) 
$\omega^\star\in[-2^j,2^j]\subset\RR$
we need 
$-1 \leq \omega^\star + k \leq 1$ 
for 
$\hat{\psi}_2(\omega^\star+k)\neq 0$
since $\supp\hat{\psi}_2 = [-1,1]$.
Thus, for $\omega^\star\in\ZZ$, 
only the summands for
$k\in\{-\omega^\star-1,-\omega^\star,-\omega^\star+1\}$ 
do not vanish.
But for $k=-\omega^\star\pm 1$ we have $\omega^\star + k = \pm 1$ and $\hat{\psi}_2(\pm 1) = 0$.
In this case the entire sum reduces to one summand $k=-\omega^\star$ such that
\begin{equation*}
  \sum_{k=-2^j}^{2^j}\lvert\hat{\psi}_2(k+\omega^\star)\rvert^2
=
  \lvert\hat{\psi}_2(-\omega^\star+\omega^\star)\rvert^2
=
  \lvert\hat{\psi}_2(0)\rvert^2
=
  1.
\end{equation*}
If $\omega^\star\not\in\ZZ$ and $\omega^\star>0$ the only non-zero summands appear for $k\in\{\lfloor\omega^\star\rfloor,\lfloor\omega^\star\rfloor-1\}$.
Thus, 
$0<r^+ := \omega^\star-\lfloor\omega^\star\rfloor<1$,
yields
\begin{equation*}
  \sum_{k=-2^j}^{2^j}\lvert\hat{\psi}_2(k+\omega^\star)\rvert^2
=
  \lvert\hat{\psi}_2(-\lfloor\omega^\star\rfloor+\omega^\star)\rvert^2
  +
  \lvert\hat{\psi}_2(-\lfloor\omega^\star\rfloor-1+\omega^\star)\rvert^2
=
  \lvert\hat{\psi}_2(r^+)\rvert^2 + \lvert\hat{\psi}_2(1-r^+)\rvert^2
\end{equation*}
which is equal to $1$ by Lemma~\ref{lemma:PropertyPsi2}.
Analogously we obtain for $\omega^\star\not\in\ZZ$, $\omega^\star<0$
that the remaining non-zero summands are those for
$
  k
\in
  \{
    \lceil\omega^\star\rceil,
    \lceil\omega^\star\rceil+1
\}$. 
With
$-1<r^-:= \lceil\omega^\star\rceil+\omega^\star<0$ we get
\begin{equation*}
  \sum_{k=-2^j}^{2^j}\lvert\hat{\psi}_2(k+\omega^\star)\rvert^2
=
  \lvert\hat{\psi}_2(\lceil\omega^\star\rceil+\omega^\star)\rvert^2
  +
  \lvert\hat{\psi}_2(\lceil\omega^\star\rceil+1+\omega^\star)\rvert^2
=
  \lvert\hat{\psi}_2(r^-)\rvert^2 + \lvert\hat{\psi}_2(1+r^-)\rvert^2.
\end{equation*}
By Lemma~\ref{lemma:PropertyPsi2} and since 
$\hat{\psi}_2(x) = \hat{\psi}_2(-x)$, 
we finally conclude
\begin{equation*}
  \lvert\hat{\psi}_2(r^-)\rvert^2 + \lvert\hat{\psi}_2(1+r^-)\rvert^2
=
  \lvert\hat{\psi}_2(\lvert r^-\rvert)\rvert^2 + \lvert\hat{\psi}_2(1-\lvert r^-\rvert)\rvert^2
=
  1.
  \qedhere
\end{equation*}
\end{proof}

\subsection{The Continuous Shearlet Transform}
For the shearlet transform we use the dilation matrix $A_a$ and shear matrix $S_s$
For $d=2$ and $\gamma=\frac{1}{2}$ they read
\begin{equation}\label{eq:dilationAndShearMatrix}
  A_a
=
  \begin{pmatrix} a & 0 \\ 0 & \sqrt{a}\end{pmatrix},
\quad
  a\in\RR^+,
\qquad
  S_s
=
  \begin{pmatrix} 1 & s \\ 0 & 1\end{pmatrix},
\quad
  s\in\RR.
\end{equation}
The shearlets $\psi_{a,s,t}$ emerge by dilation, shear and translation of a function 
$\psi\in L_2(\RR^2)$ as before
\begin{equation}\label{eq:psi_ast}
  \psi_{a,s,t}(x)
=
  a^{-\frac{3}{4}}\psi(A_a^{-1}S_s^{-1}(x-t))
=
  a^{-\frac{3}{4}}
  \psi\left(
    \begin{pmatrix}
      \frac{1}{a} & -\frac{s}{a}       \\ 
      0           & \frac{1}{\sqrt{a}}
    \end{pmatrix}
    (x-t)
  \right).
\end{equation}
We assume that $\hat{\psi}$ can be written as 
\begin{equation}\label{eq:shearlet_tensor_product}\index{shearlet}
  \hat{\psi}(\omega_1,\omega_2)
=
  \hat{\psi_1}(\omega_1)\hat{\psi_2}\left(\frac{\omega_2}{\omega_1}\right).
\end{equation} 
Consequently, we obtain for the Fourier transform
\begin{align*}
  \hat{\psi}_{a,s,t}(\omega)
&=
  a^{-\frac{3}{4}}
  \cF\left(
  \psi
  \left(
    \begin{pmatrix} 
      \frac{1}{a} & -\frac{s}{a}       \\ 
      0           & \frac{1}{\sqrt{a}}
    \end{pmatrix}
    (\cdot-t)
  \right)
  \right)(\omega)
\\
&=
  a^{-\frac{3}{4}} 
  \e^{-2\pi i\langle \omega,t \rangle} 
  \cF\left(
  \psi\left(
    \begin{pmatrix}
      \frac{1}{a} & -\frac{s}{a}       \\ 
      0           & \frac{1}{\sqrt{a}}
    \end{pmatrix}
    \cdot
  \right)
  \right)(\omega)
\\
&=
  a^{-\frac{3}{4}} 
  \e^{-2\pi i\langle \omega,t \rangle} 
  (a^{-\frac{3}{2}})^{-1}
  \hat{\psi}
  \left(
    \begin{pmatrix} 
      a         &        0 \\ 
      s\sqrt{a} & \sqrt{a}
    \end{pmatrix}
    \omega
  \right)
\\
&=
  a^{\frac{3}{4}} 
  \e^{-2\pi i\langle \omega,t \rangle} 
  \hat{\psi}
  \left(
    a\omega_1,\sqrt{a}(s\omega_1+\omega_2)
  \right)
\\
&=
  a^{\frac{3}{4}} 
  \e^{-2\pi i\langle \omega,t \rangle} 
  \hat{\psi}_1
  \left(
    a\omega_1
  \right)
  \hat{\psi}_2
  \left(
    a^{-\frac{1}{2}}
    \left(
      \frac{\omega_2}{\omega_1}+s
    \right)
  \right).
\end{align*}

The shearlet transform\index{shearlet transform} $\cSH_\psi (f)$ of $f\in L^2(\RR)$ is given as
\begin{align*}
  \cSH _\psi (f)(a,s,t)
&=
  \langle f,\psi_{a,s,t}\rangle
\\
&=
  \langle \hat{f},\hat{\psi}_{a,s,t} \rangle
\\
&=
  \int_{\RR^2} 
    \hat{f}(\omega) \overline{\hat{\psi}_{a,s,t}(\omega)} 
  \dif{\omega}
  \\
&=
  a^{\frac{3}{4}}
  \int_{\RR^2}
  \hat{f}(\omega)
  \hat{\psi}_1(a\omega_1)
  \hat{\psi}_2\left(a^{-\frac{1}{2}}\left(\frac{\omega_2}{\omega_1}+s\right)\right)
  \e^{2\pi i\langle \omega,t \rangle}
  \dif{\omega}
  \\
&=
  a^{\frac{3}{4}}
  \cF ^{-1}
  \left(
  \hat{f}(\omega)
  \hat{\psi}_1(a\omega_1)
  \hat{\psi}_2\left(a^{-\frac{1}{2}}\left(\frac{\omega_2}{\omega_1}+s\right)\right)
  \right) (t).
\end{align*}
The same formula is derived by interpreting the shearlet transform as a convolution with the function $\psi_{a,s}(x) = \overline{\psi}(-A_a^{-1}S_{s}^{-1}x)$ and using the convolution theorem.

The shearlet transform is invertible if the function $\psi$ fulfills the \emph{admissibility property}
\begin{equation*}
  \int_{\RR^2} 
  \frac{\lvert\hat \psi(\omega_1,\omega_2)\rvert^2}
  {\lvert\omega_1\rvert^2} 
  \dif{\omega_1}\dif{\omega_2} 
< 
  \infty.
\end{equation*}
Easy calculations show that any shearlet of the form \eqref{eq:shearlet_tensor_product},
where $\hat{\psi}_1$ and $\hat{\psi}_2$ are continuous and 
$\supp(\hat{\psi}_1)\subset [-b,-a]\cup [a,b]$ and $\supp(\hat{\psi}_2)\subset [-c,c]$, $a,b,c > 0$, 
is admissible.
Figure~\ref{fig:shearletFourierTime} shows a dilated and sheared shearlet in Fourier and time domain.

\begin{figure}[htbp]
  \centering
  \subfloat[Shearlet in Fourier domain \newline for $a=\frac{1}{4}$ and $s=-\frac{1}{2}$.]{\label{fig:shearletFourier}
  \imageWithBorder{0.28\textwidth}{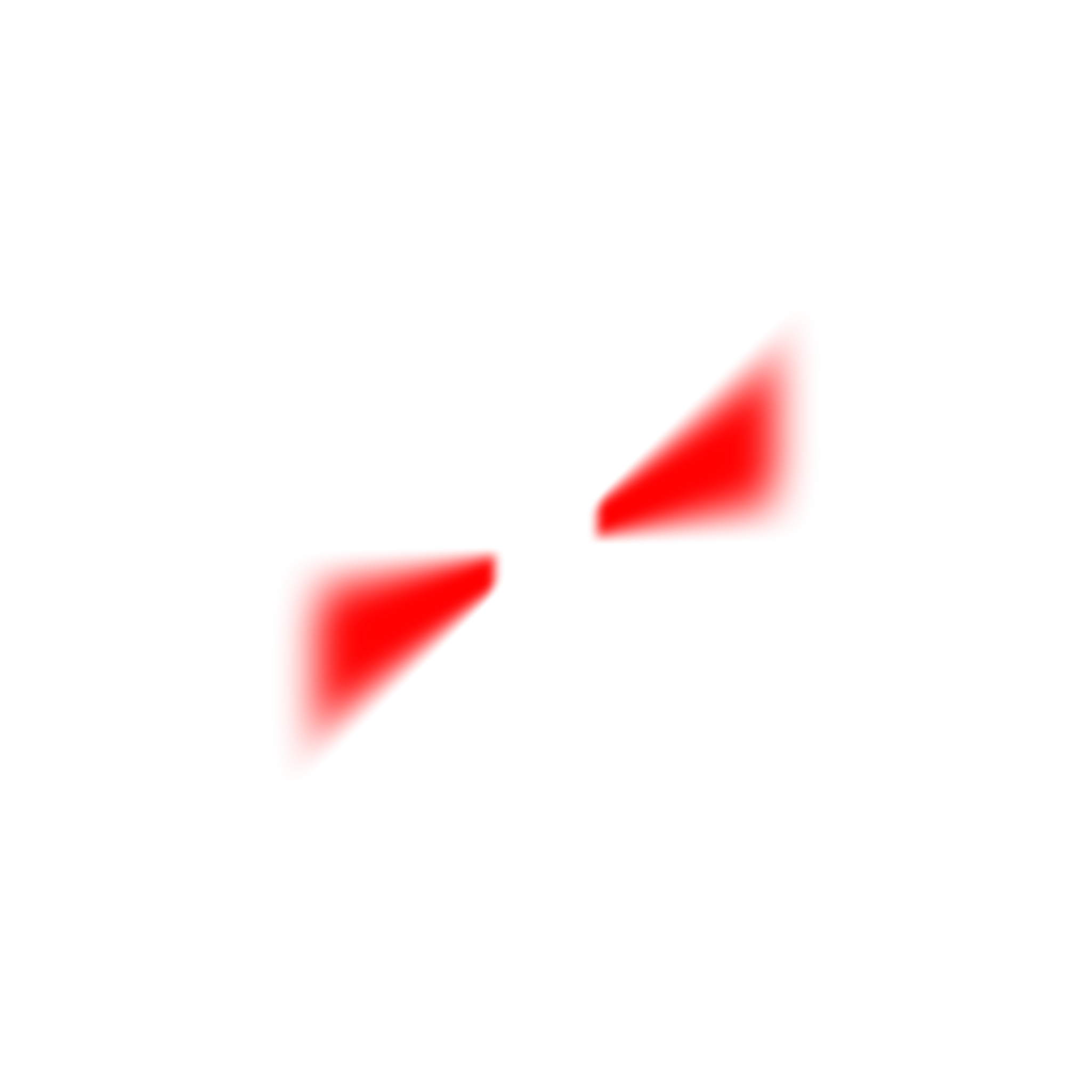}}
  \hspace{0.2cm}
  \subfloat[Same shearlet in time domain (zoomed).]{\label{fig:shearletTime}
  \imageWithBorder{0.28\textwidth}{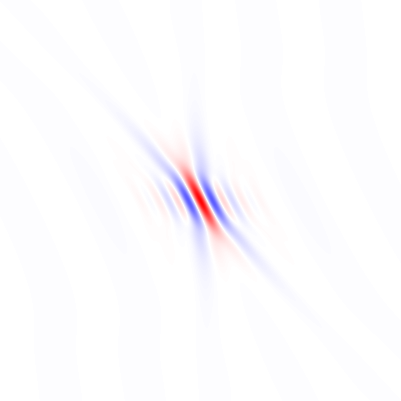}}
  \hspace{0.2cm}
  \subfloat[3D-view of time domain shearlet.]{\label{fig:shearlet_time_3D}
  \imageWithBorder{0.28\textwidth}{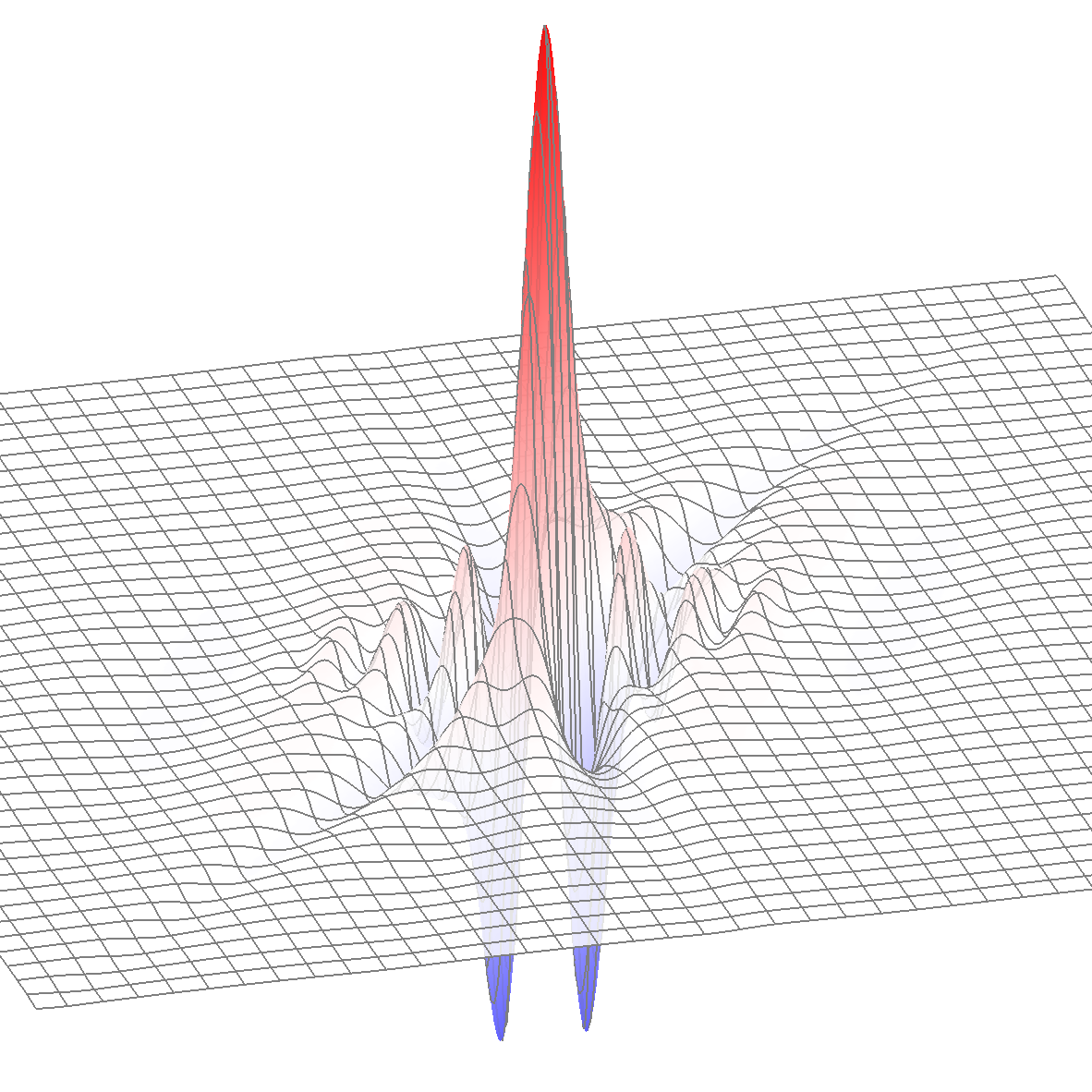}}
  \caption{Shearlet in Fourier and time domain.}
  \label{fig:shearletFourierTime}
\end{figure}

\subsection{Shearlets on the Cone}
Up to now we have said nothing about the support of our shearlet $\psi$.
We use band-limited shearlets, thus, we have compact support in Fourier domain.
In the previous section we assumed that 
$
  \hat{\psi}(\omega_1,\omega_2) 
= 
  \hat{\psi_1}(\omega_1)\hat{\psi}_2\left(\frac{\omega_2}{\omega_1}\right)
$,
where we now define $\psi_1$ and $\psi_2$ as in \eqref{eq:Psi1} and \eqref{eq:Psi2}, respectively. 
With the results shown for $\hat{\psi}_1$ for $\lvert\omega_1\rvert\geq\frac{1}{2}$ and 
$\hat{\psi}_2$ for $\lvert\omega\rvert<1$, i.e., $\lvert\omega_2\rvert<\lvert\omega_1\rvert$, 
it is natural to define the area
\begin{equation*}
    \cC ^h
  :=
    \left\{
      (\omega_1,\omega_2)\in\RR^2 
    : 
      \lvert\omega_1\rvert \geq \tfrac{1}{2}, 
      \lvert\omega_2\rvert<\lvert\omega_1\rvert
    \right\}.
\end{equation*}
We will refer to this set as the \emph{horizontal cone} (see Figure~\ref{fig:cones}). Analogously we define the \emph{vertical cone} as
\begin{equation*}
    \cC ^v
  :=
    \left\{
      (\omega_1,\omega_2)\in\RR^2 
    : 
      \lvert\omega_2\rvert \geq \tfrac{1}{2}, 
      \lvert\omega_2\rvert>\lvert\omega_1\rvert
    \right\}.
\end{equation*}
To cover the entire $\RR^2$ we define two more sets
\begin{align*}
    \cC ^\times
  &:=
    \left\{
      (\omega_1,\omega_2)\in\RR^2 
    : 
      \lvert\omega_1\rvert \geq \tfrac{1}{2}, 
      \lvert\omega_2\rvert \geq \tfrac{1}{2}, 
      \lvert\omega_1\rvert=\lvert\omega_2\rvert
    \right\},
  \\
    \cC ^0
  &:=
    \left\{
        (\omega_1,\omega_2)\in\RR^2 
      : 
        \lvert\omega_1\rvert < 1, 
        \lvert\omega_2\rvert < 1
    \right\},
\end{align*}
where $\cC ^\times$ is the intersection (or the seam lines) of the two cones and 
$\cC ^0$ is the ``low-frequency'' part.
Altogether we have
$
 \RR^2
=
 \cC ^h \cup
 \cC ^v \cup
 \cC ^\times \cup
 \cC ^0
$
with an overlapping domain
\begin{equation}\label{eq:overlappingDomain}
  \cone^\square
:=
  (-1,1)^2\setminus(-\tfrac{1}{2},\tfrac{1}{2})^2.
\end{equation}

\begin{figure}[htbp]
  \centering
  \includegraphics[width=0.4\textwidth]{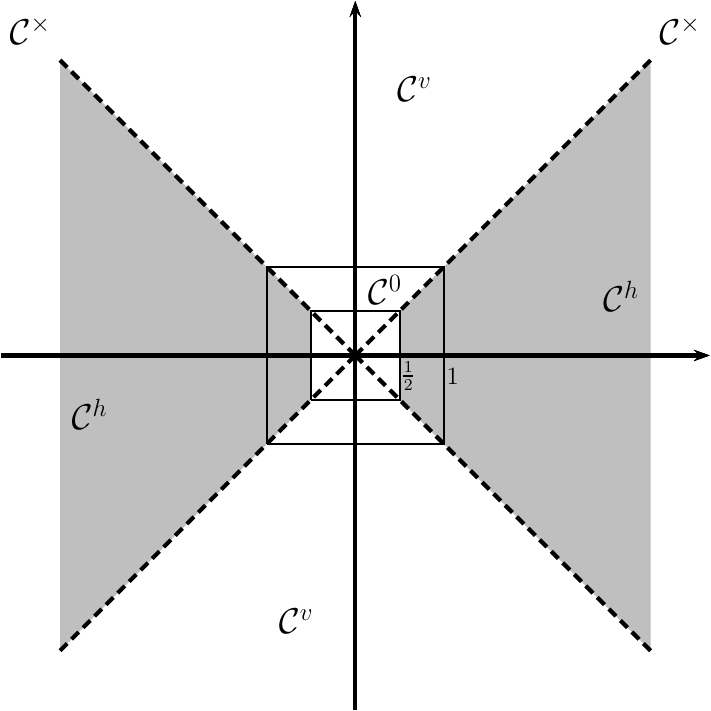}
  \caption{The sets $\cone^h$, $\cone^v$, $\cone^\times$ and $\cone^0$.}
  \label{fig:cones}
\end{figure}

Obviously the shearlet $\psi$ defined above is perfectly suited for the horizontal cone.
For each set $\cone^\kappa$, $\kappa \in \{h,v,\times\}$, we define a characteristic function
$\chi_{\cone^\kappa}(\omega)$ which is equal to 1 for $\omega\in\cone^\kappa$ and 0 for $\omega\not\in\cone^\kappa$.
We need these characteristic functions as cut-off functions at the seam lines. We set
\begin{equation}\label{eq:horizontalShearlet}\index{shearlet!on the cone}
  \hat{\psi}^h(\omega_1,\omega_2)
:=
  \hat{\psi}(\omega_1,\omega_2)
=
  \hat{\psi}_1(\omega_1)
  \hat{\psi}_2\left(\frac{\omega_2}{\omega_1}\right)
  \chi_{\cone^h}.
\end{equation}
For the non-dilated and non-sheared $\hat{\psi}^h$ the cut-off function has no effect
since the support of $\hat{\psi}^h$ is completely contained in $\cone^h$.
But after the dilation and shear we have
\begin{equation*}
  \supp \hat{\psi}_{a,s,0}
\subseteq
  \left\{
  (\omega_1,\omega_2) :
  \frac{1}{2a}\leq\left\lvert\omega_1\right\rvert\leq\frac{4}{a}, \left\lvert s + \frac{\omega_2}{\omega_1}\right\rvert\leq\sqrt{a}
  \right\}.
\end{equation*}
The question arises for which $a$ and $s$ this set remains a subset of the horizontal cone.
For $a>1$ we have that $\omega_1\leq\frac{1}{2}$ is in $\supp \hat{\psi}_{a,s,0}$ 
but not in $\cone^h$.
Thus, we can restrict ourselves to $a\leq 1$.

With $a$ fixed the second condition for $\supp \hat{\psi}_{a,s,0}$ reads
\begin{alignat}{3}\notag
 -\sqrt{a}
&\leq
  s + \frac{\omega_1}{\omega_2}
&\leq&
  \sqrt{a},
\\\label{eq:condition_s}
 -\sqrt{a} - s
&\leq\quad
  \frac{\omega_1}{\omega_2}
  \quad
&\leq&
  \sqrt{a} - s.
\end{alignat}
Since $\left\lvert\frac{\omega_1}{\omega_2}\right\rvert\leq 1$
the right condition becomes $\sqrt{a}-s\leq 1$ and
for the left condition $-\sqrt{a}-s\geq -1$,
hence, we can conclude
\begin{equation*}
 -1+\sqrt{a} 
\leq 
  s 
\leq 
  1 - \sqrt{a}.
\end{equation*}
For such $s$ it holds that $\supp\hat{\psi}_{a,s,0}\subseteq\cone^h$,
in particular the indicator function is not needed for these $s$ (with respective $a$).
One might ask for which $s$ (depending on $a$) the indicator function cuts off only parts of the function,
i.e., $\supp \hat{\psi}_{a,s,0}\cap\cone^h \neq \emptyset$.
We take again \eqref{eq:condition_s} but now we do not use a condition to \emph{guarantee} that 
$\left\lvert\frac{\omega_1}{\omega_2}\right\rvert \leq 1$
but rather ask for a condition that \emph{allows} 
$\left\lvert\frac{\omega_1}{\omega_2}\right\rvert \leq 1$.
Thus, the right bound $\sqrt{a}-s$ should be larger than $-1$ and the left bound $-\sqrt{a}-s$ should be smaller than $1$. 
Consequently, we obtain
\begin{equation*}
  -1-\sqrt{a} \leq s \leq 1 + \sqrt{a}.
\end{equation*}
Summing up, we have for $\lvert s\rvert\leq 1 - \sqrt{a}$ that $\supp\hat{\psi}_{a,s,0}\subseteq\cone^h$.
For $1-\sqrt{a}<\lvert s\rvert<1+\sqrt{a}$ parts of $\supp\hat{\psi}_{a,s,0}$ are also in $\cone^v$,
which are cut off.
For $\lvert s\rvert>1+\sqrt{a}$ the whole shearlet is set to zero by the characteristic function.
If we go back to the definition of $\hat{\psi}_{a,s,0}$ we see that the vertical range is determined by $\hat{\psi}_2$.
By definition $\hat{\psi}_2$ is axially symmetric with respect to the $y$-axis, 
in other words the ``center'' of $\hat{\psi}_2$ is taken for the argument equal to zero, i.e., $a^{-\frac{1}{2}}\left(\frac{\omega_1}{\omega_2}+s\right)=0$.
It follows that for $\lvert s\rvert=1$ the center of $\hat{\psi}_{a,s,0}$ is at the seam-lines. 
Thus, for $\lvert s\rvert=1$ approximately one half of the shearlet is cut off whereas the other half remains.
For larger $s$ larger parts would be cut. 
Consequently, we restrict ourselves to $\lvert s\rvert\leq 1$.

The shearlet for the vertical cone is defined analogously with the roles of $\omega_1$ and $\omega_2$ interchanged, i.e.,
\begin{equation}\label{eq:verticalShearlet}
  \hat{\psi}^v(\omega_1,\omega_2)
:=
  \hat{\psi}(\omega_2,\omega_1)
=
  \hat{\psi}_1(\omega_2)
  \hat{\psi}_2\left(\frac{\omega_1}{\omega_2}\right)
  \chi_{\cone^v}.
\end{equation}
All the results from above apply to this setting.
For $(\omega_1,\omega_2)\in\cone^\times$, i.e., $\lvert\omega_1\rvert=\lvert\omega_2\rvert$, both definitions coincide and we define
\begin{equation}\label{eq:seamlineShearlet}
  \hat{\psi}^\times(\omega_1,\omega_2)
:=
  \hat{\psi}(\omega_1,\omega_2)\chi_{\cone^\times}.
\end{equation}
The shearlets $\hat{\psi}_h$, $\hat{\psi}_v$ (and $\hat{\psi^\times}$) are called \emph{shearlets on the cone}. 
This concept was introduced in \cite{GKL06}.

We have functions to cover three of the four parts of $\RR^2$. 
The remaining part $\cone^0$ will be handled with a scaling function which is presented in the next section.

\subsection{Scaling Function}\label{sec:scaling_function}
For the center part $\cone^0$ (also low-frequency part) we define another set of functions.
To this end, we need the following \emph{scaling function}\index{scaling function}
\begin{equation*}
  \varphi(\omega) 
:=
  \begin{cases}
    1                                                      & \text{for } \lvert\omega\rvert\leq\frac{1}{2}, \\
    \cos\left(\frac{\pi}{2}v(2\lvert\omega\rvert-1)\right) & \text{for } \frac{1}{2}<\lvert\omega\rvert<1,  \\
    0                                                      & \text{otherwise}.
  \end{cases}
\end{equation*}
The full \emph{scaling function} $\phi$ is then defined as
\begin{align}\label{eq:phi}
  \hat{\phi}(\omega_1,\omega_2)
:=&
  \begin{cases}
    \varphi(\omega_1) & \text{for } \lvert\omega_2\rvert\leq\lvert\omega_1\rvert, \\
    \varphi(\omega_2) & \text{for } \lvert\omega_1\rvert<\lvert\omega_2\rvert     \\
  \end{cases}
\\\notag
=&
  \begin{cases}
    1                                                        & \text{for } \lvert\omega_1\rvert\leq\frac{1}{2},\lvert\omega_2\rvert\leq\frac{1}{2},            \\
    \cos\left(\frac{\pi}{2}v(2\lvert\omega_1\rvert-1)\right) & \text{for } \frac{1}{2}<\lvert\omega_1\rvert < 1, \lvert\omega_2\rvert\leq\lvert\omega_1\rvert, \\ 
    \cos\left(\frac{\pi}{2}v(2\lvert\omega_2\rvert-1)\right) & \text{for } \frac{1}{2}<\lvert\omega_2\rvert < 1, \lvert\omega_1\rvert<\lvert\omega_2\rvert,    \\
    0                                                        & \text{otherwise}.
  \end{cases}
\end{align}
The decay of the scaling function $\hat{\phi}$ (respectively $\varphi$) is chosen to match with the increase of $\hat{\psi}_1$.
For $\lvert\omega\rvert\in \left[\frac{1}{2},1\right]$ we have by \eqref{eq:OverlapPsi1_2} that
\begin{equation}\label{eq:overlapPsi1Phi}
  \lvert\hat{\psi}_1(\omega)\rvert^2+\lvert\varphi(\omega)\rvert^2
=
  \sin^2\left(\frac{\pi}{2}v(2\lvert\omega\rvert-1)\right)
  +
  \cos^2\left(\frac{\pi}{2}v(2\lvert\omega\rvert-1)\right)
=
  1.
\end{equation}

\begin{remark}
  Observe that in our setting it would not be useful to define the scaling function as a simple tensor product, namely
  \begin{align}\notag
    \hat{\Phi}(\omega)
  :=&
    \varphi(\omega_1)
    \varphi(\omega_2)
  \\\label{eq:Phi}
  =&
    \begin{cases}
      1                                                        & \text{for } \lvert\omega_1\rvert\leq\frac{1}{2},\lvert\omega_2\rvert\leq\frac{1}{2},        \\
      \cos\left(\frac{\pi}{2}v(2\lvert\omega_1\rvert-1)\right) & \text{for } \frac{1}{2}<\lvert\omega_1\rvert < 1, \lvert\omega_2\rvert\leq\frac{1}{2},      \\ 
      \cos\left(\frac{\pi}{2}v(2\lvert\omega_2\rvert-1)\right) & \text{for } \frac{1}{2}<\lvert\omega_2\rvert < 1, \lvert\omega_1\rvert\leq\frac{1}{2},      \\
      \cos\left(\frac{\pi}{2}v(2\lvert\omega_1\rvert-1)\right) 
      \cos\left(\frac{\pi}{2}v(2\lvert\omega_2\rvert-1)\right) & \text{for } \frac{1}{2}<\lvert\omega_1\rvert\leq 1, \frac{1}{2}<\lvert\omega_2\rvert\leq 1, \\
      0                                                        & \text{otherwise.}
    \end{cases}
  \end{align}
  Figure~\ref{fig:differentScalingFunctions} shows the two different scaling functions.
  Obviously, the first scaling function in Figure~\ref{fig:ConeScaling} aligns much better with the cones than the second in Figure~\ref{fig:TensorScaling}.
  \begin{figure}[htbp]
    \centering
    \subfloat[$\hat{\phi}(\omega)$.]{\label{fig:ConeScaling}
    \imageWithBorder{0.4\textwidth}{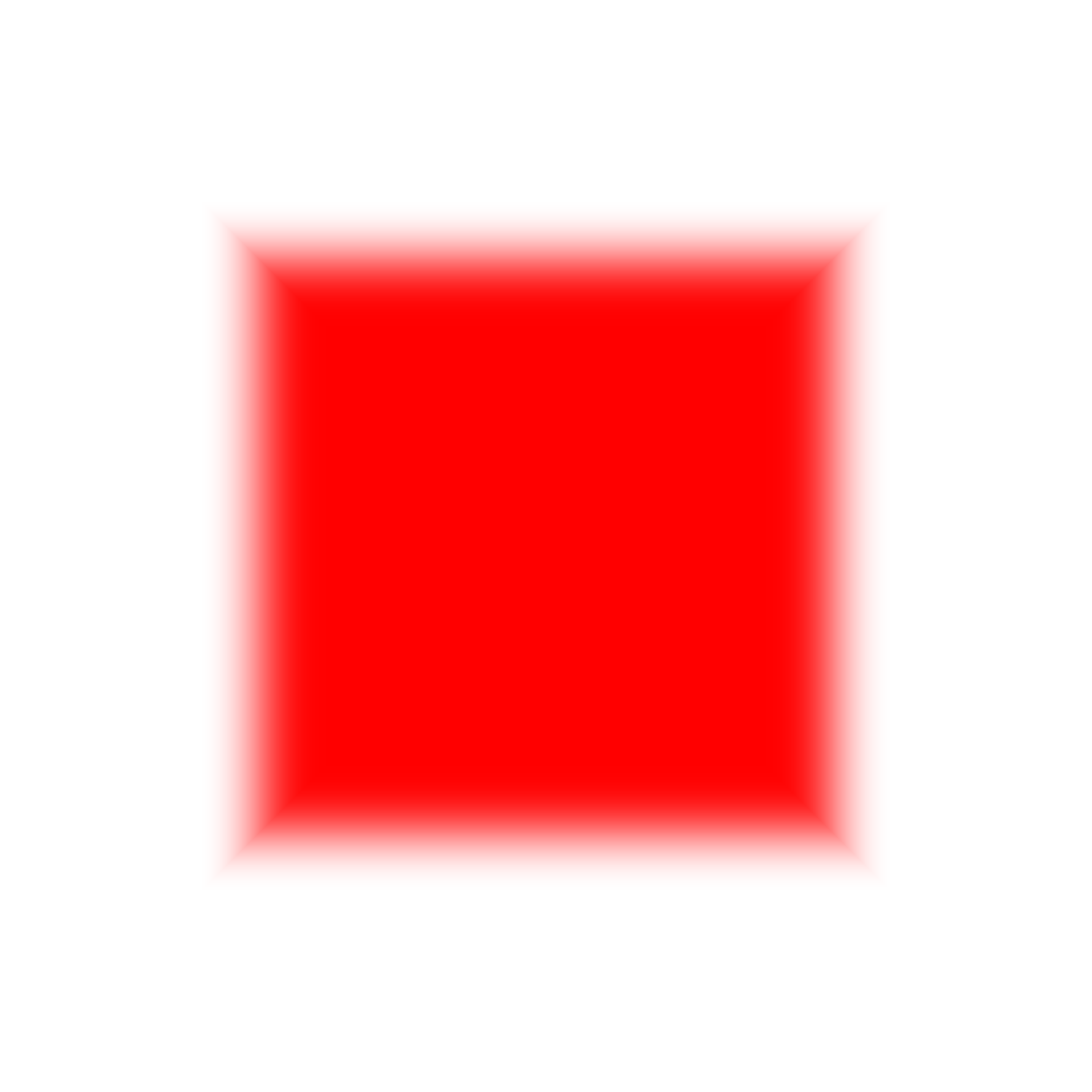}}
    \hspace{0.2cm}
    \subfloat[$\hat{\Phi}(\omega)$.]{\label{fig:TensorScaling}
    \imageWithBorder{0.4\textwidth}{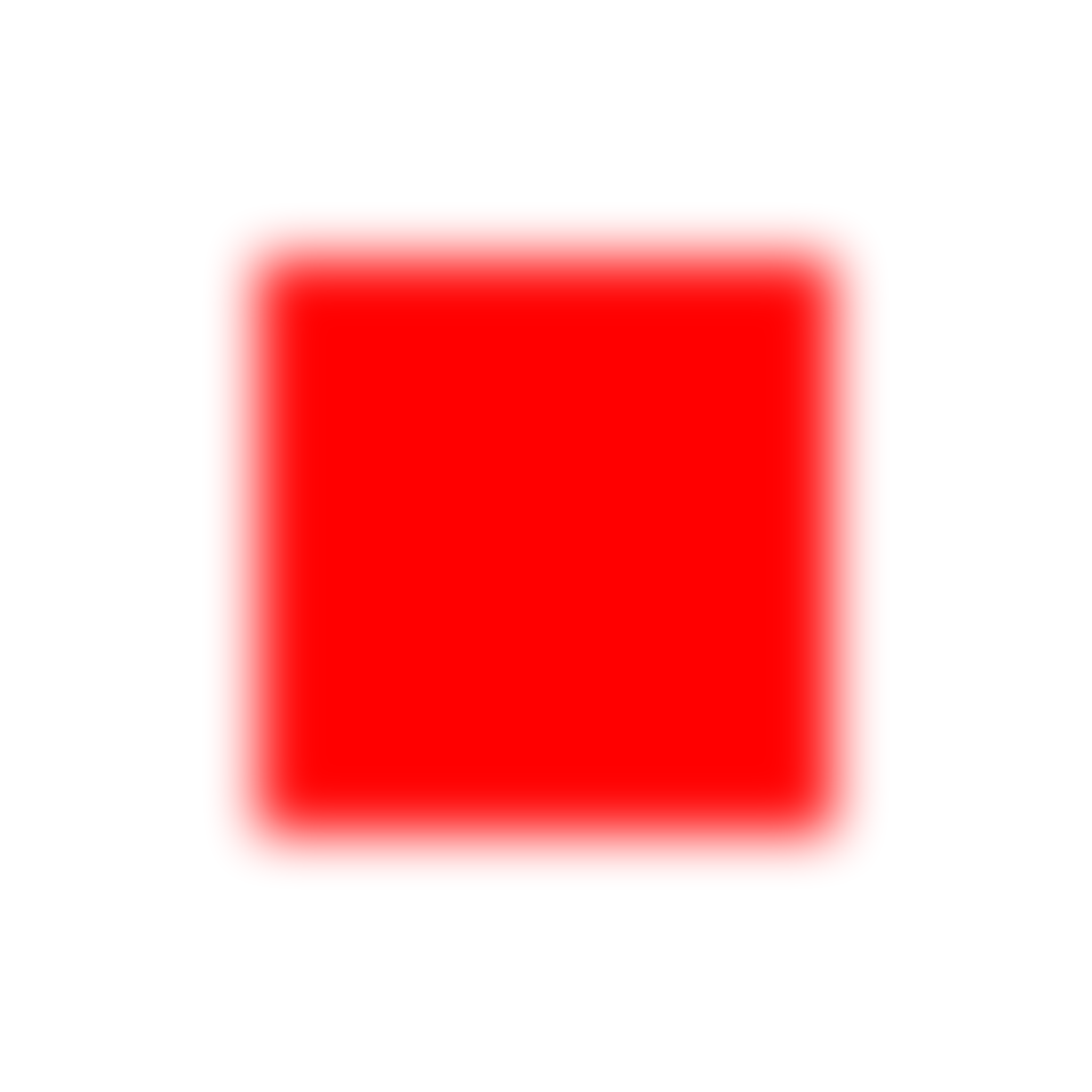}}
    \caption{The different scaling functions in \eqref{eq:phi} and \eqref{eq:Phi}.}
    \label{fig:differentScalingFunctions}
  \end{figure}
  In \cite{GL13} a new shearlet construction was introduced which is based on the scaling function in \eqref{eq:Phi}. 
  We discuss the new construction in Section~\ref{sec:smooth_shearlets}.
\end{remark}

\begin{remark}
  On the other hand it is possible to rewrite the definition of the original $\hat{\phi}$ as a shearlet-like tensor product. 
  We obtain a horizontal scaling function $\hat{\phi}^h$ and a vertical scaling function $\hat{\phi}^v$ as follows
  \begin{equation*}
    \hat{\phi}^h(\omega_1,\omega_2) 
  := 
    \varphi(\omega_1)\varphi\left(\frac{\omega_2}{2\omega_1}\right)
  \quad\text{and}\quad
    \hat{\phi}^v(\omega_1,\omega_2) 
  := 
    \varphi(\omega_2)\varphi\left(\frac{\omega_1}{2\omega_2}\right),
  \end{equation*}
  where
  \begin{equation*}
    \varphi\left(\frac{\omega_2}{2\omega_1}\right)
  =
    \begin{cases}
    1                                                                                              & \text{for }\lvert\omega_2\rvert\leq \lvert\omega_1\rvert,                       \\
    \cos\left(\frac{\pi}{2}v\left(\left\lvert\frac{\omega_2}{\omega_1}\right\rvert-1\right)\right) & \text{for }\lvert\omega_1\rvert < \lvert\omega_2\rvert < 2\lvert\omega_1\rvert, \\
    0                                                                                              & \text{otherwise.}
    \end{cases}
  \end{equation*}
  Thus, $\varphi\left(\frac{\omega_2}{2\omega_1}\right)$ is a continuous extension of the characteristic function of the horizontal cone $\cone^h$.
\end{remark}

We set
\begin{equation*}
  \phi_{a,s,t}(x) 
= 
  \phi_{t}(x) 
= 
  \phi(x-t).
\end{equation*}
Note that there is neither dilation nor shear for the scaling function, only translation. 
Consequently,  the index ``$a,s,t$'' from  the shearlet $\psi$  reduces to ``$t$''. 
We further obtain
\begin{equation*}
  \hat{\phi_t}(\omega) 
= 
  \e^{-2\pi i \langle \omega,t \rangle} \hat{\phi}(\omega).
\end{equation*}
The transform can be obtained similar as before, namely
\begin{equation*}
  \cSH _\phi(f)(a,s,t)
=
  \langle f, \phi_{t} \rangle.
\end{equation*}

\section{Fast Computation of the Finite Discrete Shearlet Transform} \label{sec:computation_discrete_shearlet_transform}
We consider digital images in $\RR^{M\times N}$ as functions sampled on the grid
$\left\{\left(\frac{m_1}{M},\frac{m_2}{N}\right) : (m_1,m_2)\in\cG  \right\}$
with
$\cG := \{(m_1,m_2) : m_1 = 0,\ldots, M-1,\ m_2=0,\ldots N-1\}$
and assume periodic continuation over the boundary.

The discrete shearlet transform is basically known, but in contrast to the existing literature we present here a fully discrete setting. 
That is, we do not only discretize the involved parameters $a$, $s$ and $t$ but also consider only a finite number of discrete translations $t$. 
Additionally, our setting discretizes the translation parameter $t$ on a rectangular grid and independent of the dilation and shear parameter. 
See Section~\ref{section:Remarks} for further remarks on this topic.

\subsection{Finite Discrete Shearlets}
Let $j_0:=\lfloor \frac{1}{2}\log_2 \max\{M,N\}\rfloor$ be the number of considered scales.
To obtain a discrete shearlet transform, we
discretize the dilation, shear and translation parameters as
\begin{align}\notag
  a_j
&:=
  2^{-2j} = \frac{1}{4^j},\quad j= 0,\ldots,j_0-1,
  \\\notag
  s_{j,k}
&:=
  k2^{-j},\quad -2^j\leq k\leq 2^j,
  \\\label{eq:discrete_t}
  t_m
&:=
  \left(\frac{m_1}{M},\frac{m_2}{N}\right),\quad m \in \cG.
\end{align}
With these notations our shearlet becomes\index{shearlet!discrete}
$
  \psi_{j,k,m}(x)
:=
  \psi_{a_j,s_{j,k},t_{m}} (x)
=
  \psi(A_{a_j,\frac{1}{2}}^{-1}S_{s_{j,k}}^{-1}(x-t_{m}))
$.
Observe that compared to the continuous shearlets defined in \eqref{eq:psi_ast} we omit the factor $a^{-\frac{3}{4}}$.
In Fourier domain we obtain
\begin{equation*}
 \hat{\psi}_{j,k,m}(\omega)
=
  \hat{\psi} (A_{a_j}^\tT S_{s_{j,k}}^\tT\omega)
  \e^{-2\pi i \langle \omega, t_m \rangle}
=
  \hat{\psi}_1
  \left(
  4^{-j}\omega_1
  \right)
  \hat{\psi}_2
  \left(
  2^j\frac{\omega_2}{\omega_1} + k
  \right)
  \e^{-2\pi i \langle \omega, \binom{m_1/M}{m_2/N} \rangle},
\quad 
  \omega \in \Omega,
\end{equation*}
where
$
  \Omega
:=
  \left\{ 
    (\omega_1,\omega_2)
  :
    \omega_1 
  =
    -\left\lfloor\frac{M}{2}\right\rfloor, \ldots , \left\lceil\frac{M}{2}\right\rceil -1,
    \omega_2 
  = 
    -\left\lfloor\frac{N}{2}\right\rfloor, \ldots , \left\lceil\frac{N}{2}\right\rceil -1
  \right\}.
$
The chosen discretization of the dilation and shear parameter together with the support properties of $\psi$ induces the frequency tiling shown in Figure~\ref{fig:frequency_tiling_indices}.

We consider these shearlets on the previously introduced cones.
By definition the parameters fulfill $a\leq 1$ and $\lvert s\rvert\leq 1$.
Therefore we see that a cut off due to the cone boundaries happens only for $\lvert k \rvert = 2^j$ where $\lvert s\rvert = 1$.
For both cones we have for $\lvert s\rvert=1$ two ``half'' shearlets with a gap at the seam line.
None of the shearlets is defined on the seam line $\cone^\times$. 
To obtain full shearlets at the seam lines we ``glue'' the three parts together,
that is, we define for $\lvert k \rvert = 2^j$ a sum of shearlets
\begin{equation*}
  \hat{\psi}^{h\times v}_{j,k,m}
:=
  \hat{\psi}^h_{j,k,m}
  +
  \hat{\psi}^v_{j,k,m}
  +
  \hat{\psi}^\times_{j,k,m}.
\end{equation*}
We define the \emph{discrete shearlet transform}\index{shearlet transform!discrete} as
\begin{equation*}
  \cSH (f)(\kappa,j,k,m)
:=
  \begin{cases}
    \langle f,\phi_m \rangle                   & \text{for }\kappa = 0,                             \\
    \langle f,\psi^\kappa_{j,k,m} \rangle      & \text{for }\kappa \in \{h,v\},                     \\
    \langle f,\psi^{h\times v}_{j,k,m} \rangle & \text{for }\kappa = \times, \lvert k \rvert =  2^j
  \end{cases}
\end{equation*}
where $j= 0,\ldots,j_0-1$, $-2^j+1\leq k \leq 2^j-1$ and $m\in \cG $ if not stated otherwise.
The shearlet transform can be efficiently realized by applying the \fftn\ and its inverse \ifftn.

Using Parseval's formula the discrete shearlet transform is computed for $\kappa=h$ as follows 
(observe that $\hat{\psi}$ is real):
\begin{align}\notag
  \cSH (f)(h,j,k,m)  \notag
&=
  \langle f, \psi_{j,k,m}^h \rangle
=
  \frac{1}{MN}
  \langle \hat{f}, \hat{\psi}_{j,k,m}^h \rangle
  \\\notag
&=
  \frac{1}{MN}
  \sum_{\omega\in\Omega}
  \overline{
  \e^{-2\pi i\langle \omega,\binom{m_1/M}{m_2/N} \rangle}
  \hat{\psi}
  (4^{-j}\omega_1,4^{-j}k\omega_1+2^{-j}\omega_2)
  }
  \hat{f}(\omega_1,\omega_2)
  \\\notag
&=
  \frac{1}{MN}
  \sum_{\omega\in\Omega}
  \hat{\psi}
  (4^{-j}\omega_1,4^{-j}k\omega_1+2^{-j}\omega_2)
  \hat{f}(\omega_1,\omega_2)
  \e^{2\pi i\left\langle \omega,\binom{m_1/M}{m_2/N} \right\rangle}.
\end{align}
With $\hat{g}_{j,k}(\omega) := \hat{\psi}(4^{-j}\omega_1,4^{-j}k\omega_1+2^{-j}\omega_2)\hat{f}(\omega_1,\omega_2)$ this becomes
\begin{align}\notag
  \cSH (f)(h,j,k,m)
&=
  \frac{1}{MN}
  \sum_{\omega\in\Omega}
  \hat{g}_{j,k}(\omega)
  \e^{2\pi i\left\langle \omega,\binom{m_1/M}{m_2/N} \right\rangle}.
\end{align}
Since $\hat{g}_{j,k}(\omega)\in\CC^{M\times N}$ final step in computation of the shearlet transform is an inverse FFT of $\hat{g}_{j,k}$, thus
\begin{align}\notag
  \cSH (f)(h,j,k,m)
&=
  \ifftn(\hat{g}_{j,k})
  \\\label{eq:discreteHorizontalTrans}
&=
  \ifftn(\hat{\psi}
  (4^{-j}\omega_1,4^{-j}k\omega_1+2^{-j}\omega_2)
  \hat{f}(\omega_1,\omega_2)).
\end{align}

For the vertical cone, i.e., $\kappa=v$, the transform reads
\begin{equation} \label{eq:discreteVerticalTrans}
  \cSH (f)(v,j,k,m)
=
  \ifftn(\hat{\psi}
  (4^{-j}\omega_2,4^{-j}k\omega_2+2^{-j}\omega_1)
  \hat{f}(\omega_1,\omega_2))
\end{equation}
and for the seam line part with $\lvert k \rvert=2^j$ we use the ``glued'' shearlets leading to
\begin{equation} \label{eq:discreteSeamTrans}
  \cSH (f)_{\psi^{h\times v}}(j,k,m)
=
  \ifftn(\hat{\psi}^{h\times v}
  (4^{-j}\omega_1,4^{-j}k\omega_1+2^{-j}\omega_2)
  \hat{f}(\omega_1,\omega_2)).
\end{equation}
Finally for the low-pass with
$\hat{g}_0(\omega_1,\omega_2):=\hat{\phi}(\omega_1,\omega_2)\hat{f}(\omega_1,\omega_2)$ 
and similar steps as above the transform is computed as
\begin{align}\notag
  \cSH _\phi(f)(m)
&=
  \langle f, \phi_{m} \rangle
  \\\notag
&=
  \frac{1}{MN}
  \langle \hat{f}, \hat{\phi}_{m} \rangle
  \\\notag
&=
  \frac{1}{MN}
  \sum_{\omega\in\Omega}
  \overline{
  \e^{-2\pi i\langle \omega,\binom{m_1/M}{m_2/N} \rangle}
  \hat{\phi}
  (\omega_1,\omega_2)
  }
  \hat{f}(\omega_1,\omega_2)
  \\\notag
&=
  \frac{1}{MN}
  \sum_{\omega\in\Omega}
  \e^{+2\pi i\langle \omega,\binom{m_1/M}{m_2/N}  \rangle}
  \hat{\phi}
  (\omega_1,\omega_2)
  \hat{f}(\omega_1,\omega_2)
  \\\notag
&=
  \frac{1}{MN}
  \sum_{\omega\in\Omega}
  \e^{+2\pi i\langle \omega,\binom{m_1/M}{m_2/N}  \rangle}
  \hat{g}_0(\omega)
  \\\notag
&=
  \ifftn(\hat{g}_0)
  \\\label{eq:discreteLowTrans}
&=
  \ifftn(\hat{\phi}
  (\omega_1,\omega_2)\hat{f}(\omega_1,\omega_2)).
\end{align}

The complete shearlet transform is the combination of \eqref{eq:discreteHorizontalTrans} to \eqref{eq:discreteLowTrans}. We summarize
\begin{small}\begin{equation}\label{eq:shearletTransform}
  \cSH (f)(\kappa,j,k,m)
  =
  \begin{cases}
    \ifftn(\hat{\phi}(\omega_1,\omega_2)\hat{f}(\omega_1,\omega_2))
    &\text{for }\kappa = 0,
    \\
    \ifftn(\hat{\psi}
    (4^{-j}\omega_1,4^{-j}k\omega_1+2^{-j}\omega_2)
    \hat{f}(\omega_1,\omega_2))
    &\text{for }\kappa = h, \lvert k \rvert\leq 2^j-1,
    \\
    \ifftn(\hat{\psi}
    (4^{-j}\omega_2,4^{-j}k\omega_2+2^{-j}\omega_1)
    \hat{f}(\omega_1,\omega_2))
    &\text{for }\kappa = v, \lvert k \rvert\leq 2^j-1,
    \\
    \ifftn(\hat{\psi}^{h\times v}
    (4^{-j}\omega_1,4^{-j}k\omega_1+2^{-j}\omega_2)
    \hat{f}(\omega_1,\omega_2))
    &\text{for }\kappa \neq 0, \lvert k \rvert = 2^j.
  \end{cases}
\end{equation}\end{small}

\subsection{A Discrete Shearlet Frame}
In view of the inverse shearlet transform we prove that our discrete shearlets constitute a Parseval frame of the finite Euclidean space $L_2(\cG)$.
Recall that for a Hilbert space $\cH$ a sequence
$\{u_i : i\in\cI \}$ is a \emph{frame}\index{frame} 
if and only if there exist constants $0 < A\leq B <\infty$ such that
\begin{equation*}
  A\lVert f\rVert_\cH^2
\leq
  \sum_{i\in\cI}
  \lvert\langle
    f, u_i
  \rangle\rvert^2
\leq
  B \lVert f\rVert_\cH^2
\quad\text{for all }
  f\in\cH .
\end{equation*}
The frame is called \emph{tight}\index{frame!tight} if $A=B$ and a \emph{Parseval}\index{frame!Parseval} frame if $A = B = 1$.
Thus, for Parseval frames we have that
\begin{equation*}
  \lVert f\rVert_\cH^2
=
  \sum_{i\in\cI}
  \lvert\langle f,u_i \rangle\rvert^2
\quad\text{for all }
  f\in\cH 
\end{equation*}
which is equivalent to the reconstruction formula
\begin{equation*} 
  f
=
  \sum_{i\in\cI}
  \langle f,u_j\rangle u_j
\quad\text{for all }
  f\in\cH .
\end{equation*}
Further details on frames can be found in \cite{Chr03} and \cite{Mal08}. 
In the $d$-dimensional Euclidean space we can
arrange the frame elements $u_i$, $i = 1,\ldots,\widetilde{d} \geq d$ as rows of a matrix $U$.
Then we have indeed a frame if $U$ has full rank and a Parseval frame if and only if $U^\tT U = I_d$.
Note that $U U^\tT = I_{\widetilde{d}}$ is only true if the frame is an orthonormal basis.
The Parseval frame transform and its inverse read
\begin{equation}\label{eq:shearlet_transform_as_matrix}
  (\langle f,u_i \rangle)_{i =1}^{\widetilde{d}} 
= 
  U f
\quad \text{and} \quad
  f 
= 
  U^\tT (\langle f,u_i \rangle)_{i =1}^{\widetilde{d}}.
\end{equation}
By the following theorem our shearlets provide such a convenient system.

\begin{theorem} \index{frame!discrete shearlet}
  The discrete shearlet system
  \begin{align*}
  &\{
    \psi^h_{j,k,m}(\omega) : j= 0,\ldots,j_0-1, -2^{j}+1\leq k\leq 2^j-1, m\in\cG 
    \}\\
  &\cup
    \{
    \psi^v_{j,k,m}(\omega) : j= 0,\ldots,j_0-1, -2^{j}+1\leq k\leq 2^j-1, m\in\cG 
    \}\\
  &\cup
    \{
    \psi^{h\times v}_{j,k,m}(\omega) : j= 0,\ldots,j_0-1, \lvert k \rvert = 2^j, m\in\cG 
    \}\\
   &\cup
    \{
    \phi_{m}(\omega) : m\in\cG
    \}
  \end{align*}
  provides a Parseval frame for $L^2(\cG)$.
\end{theorem}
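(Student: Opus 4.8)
The plan is to show that the defining Parseval condition $\|f\|^2 = \sum |\langle f, u\rangle|^2$ holds by passing to the Fourier side via the Plancherel formula, and there reducing everything to a single pointwise identity: that the sum of the squared magnitudes of all the (dilated, sheared, but \emph{not} translated) shearlet filters, together with the scaling filter, equals $1$ at every frequency $\omega \in \Omega$. First I would fix $f \in L_2(\mathcal{I})$ and, using the Plancherel formula $\langle f, g\rangle = \frac{1}{MN}\langle \hat f, \hat g\rangle$ together with the fact that each transform coefficient is (up to the $\frac{1}{MN}$ and a modulation) an inverse DFT of $\hat f$ times a filter, apply Parseval for the finite DFT in the translation variable $m$: for each fixed $(\kappa, j, k)$,
\begin{equation*}
  \sum_{m\in\mathcal I} |\mathcal{SH}(f)(\kappa,j,k,m)|^2
  =
  \frac{1}{MN}\sum_{\omega\in\Omega} |\widehat{g}_{\kappa,j,k}(\omega)|^2
  =
  \frac{1}{MN}\sum_{\omega\in\Omega} |\widehat{\psi}^\kappa_{j,k,0}(\omega)|^2\,|\hat f(\omega)|^2 .
\end{equation*}
Summing over all indices $\kappa, j, k$ and interchanging the (finite) sums, the claim $\sum |\langle f, u\rangle|^2 = \|f\|^2 = \frac{1}{MN}\sum_\omega |\hat f(\omega)|^2$ becomes equivalent to the pointwise normalization
\begin{equation*}
  |\hat\phi(\omega)|^2
  + \sum_{j=0}^{j_0-1}\ \sum_{k}\Big( |\widehat{\psi}^h_{j,k,0}(\omega)|^2 + |\widehat{\psi}^v_{j,k,0}(\omega)|^2 + \text{(seam terms)} \Big)
  = 1 \qquad \text{for all } \omega\in\Omega,
\end{equation*}
where the $k$-range and the seam contribution are exactly as in the statement of the theorem.

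The next step is to verify this pointwise identity by splitting $\Omega$ according to the cone decomposition $\RR^2 = \cone^h\cup\cone^v\cup\cone^\times\cup\cone^0$ and the overlap region $\cone^\square$ from \eqref{eq:overlappingDomain}. On the low-frequency part where $|\omega_1|,|\omega_2|\le \tfrac12$, only $\hat\phi \equiv 1$ contributes and all shearlet filters vanish (their support in $\omega_1$, resp.\ $\omega_2$, starts at $\tfrac{1}{2}\cdot 4^j \ge \tfrac12$, and for $j=0$ the factor $\hat\psi_1$ vanishes on $[-\tfrac12,\tfrac12]$ by Theorem~\ref{thm:PropertyPsi1}). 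On the interior of the horizontal cone, $\hat\phi(\omega)=\varphi(\omega_1)$ and the relevant filters are the $\hat\psi^h_{j,k,0}$; here I factor the sum as a product of a radial sum in $\omega_1$ and an angular sum in $\omega_2/\omega_1$. The radial sum is $|\varphi(\omega_1)|^2 + \sum_{j\ge 0}|\hat\psi_1(4^{-j}\omega_1)|^2$, which equals $1$ by \eqref{eq:overlapPsi1Phi} for $|\omega_1|\in[\tfrac12,1]$ and by Theorem~\ref{thm:PropertyPsi1} for $|\omega_1|>1$; the angular sum is $\sum_{k=-2^j}^{2^j}|\hat\psi_2(2^j \tfrac{\omega_2}{\omega_1}+k)|^2 = 1$ by Theorem~\ref{thm:PropertyPsi2}, since $|\omega_2/\omega_1|\le 1$ on $\cone^h$. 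The vertical cone is handled identically with the roles of $\omega_1$ and $\omega_2$ interchanged, using \eqref{eq:verticalShearlet}.

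The main obstacle — and the part needing genuine care rather than bookkeeping — is the seam-line behavior and the overlap region $\cone^\square$, i.e.\ the frequencies where $|\omega_1| = |\omega_2|$ or where a dilated/sheared $\hat\psi^h_{j,k,0}$ with $|k|$ near $2^j$ leaks across into $\cone^v$. The construction deliberately truncates each half-shearlet at the seam by the cutoff $\chi_{\cone^\kappa}$ and then reassembles $\hat\psi^{h\times v}_{j,k,m} = \hat\psi^h_{j,k,m} + \hat\psi^v_{j,k,m} + \hat\psi^\times_{j,k,m}$ for $|k| = 2^j$; one must check that these three pieces have (essentially) disjoint supports up to the measure-zero seam, so that $|\hat\psi^{h\times v}_{j,k,0}|^2 = |\hat\psi^h_{j,k,0}|^2 + |\hat\psi^v_{j,k,0}|^2 + |\hat\psi^\times_{j,k,0}|^2$ pointwise, and that with this accounting the angular sum over $k\in\{-2^j,\dots,2^j\}$ on each cone still telescopes to $1$ without double-counting the $|k|=2^j$ terms — this is exactly why the $h$- and $v$-filters in the frame are restricted to $|k|\le 2^j-1$ while the glued filter carries $|k| = 2^j$. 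I would treat $\cone^\times$ as a null set for the integral/sum (it contributes zero to $\frac{1}{MN}\sum_\omega$ only if it genuinely meets grid points, so more honestly: on the finite grid $\Omega$ one checks directly that the definitions \eqref{eq:horizontalShearlet}, \eqref{eq:verticalShearlet}, \eqref{eq:seamlineShearlet} are arranged so that the total squared filter mass at every grid frequency — seam points included — is $1$), and then conclude that the global Parseval identity holds, which by the matrix reformulation $U^\tT U = I$ is equivalent to the Parseval-frame property and yields the reconstruction formula $f = \sum \langle f, u\rangle u$.
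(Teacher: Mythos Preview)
Your proposal is correct and follows essentially the same route as the paper: reduce via Plancherel and the DFT Parseval identity in $m$ to the pointwise filter normalization, then verify that normalization cone-by-cone using Theorem~\ref{thm:PropertyPsi1} for the radial sum, Theorem~\ref{thm:PropertyPsi2} for the angular sum, and \eqref{eq:overlapPsi1Phi} on the overlap $\cone^\square$. The one place where the paper is more explicit than your sketch is the seam: rather than treating $\cone^\times$ as negligible, the paper observes that on $\cone^\times$ (where $\omega_1=\pm\omega_2$ and $|k|=2^j$) the second argument of $\hat\psi$ collapses to zero, so the filter value is exactly $\hat\psi_1(4^{-j}\omega_1)\hat\psi_2(0)=\hat\psi_1(4^{-j}\omega_1)$, and the seam contribution slots cleanly into the same radial sum---this is how one avoids any hand-waving about ``measure zero'' on the finite grid $\Omega$.
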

\begin{proof}
We have to show that
\begin{align*}
  \lVert f\rVert_{L^2(\cG)}^2
&=
  \sum_{\kappa\in\{h,v\}} 
  \sum_{j=0}^{j_0-1} 
  \sum_{k=-2^j+1}^{2^j-1} 
  \sum_{m\in \cG}
    \lvert\langle f,\psi_{j,k,m}^\kappa \rangle\rvert^2
+
  \sum_{j=0}^{j_0-1}
  \sum_{k=\pm 2^j} 
  \sum_{m\in \cG}
    \lvert\langle f,\psi_{j,k,m}^{h\times  v} \rangle\rvert^2
+
  \sum_{m\in \cG}
  \lvert\langle f,\phi_{m} \rangle\rvert^2 
\\
&=: 
  C.
\end{align*}
Since 
$\lVert f\rVert_{L^2(\cG)} = \lVert f\rVert_F^2 = \frac{1}{MN}\lVert\hat{f}\rVert_F^2$ 
(Parseval's formula)
it is sufficient to show that $C$ is equal to $\frac{1}{MN}\lVert\hat{f}\rVert_F^2$.

By \eqref{eq:discreteHorizontalTrans} we know that
\begin{align*}
  \langle f,\psi_{j,k,m}^h \rangle
&=
  \frac{1}{MN}
  \sum_{\omega\in\Omega}
  \e^{2\pi i\langle \omega,\binom{m_1/M}{m_2/N} \rangle}
  \hat{g}_{j,k}(\omega)
=
  g_{j,k}(m).
\end{align*}
We further obtain
\begin{align*}
  \sum_{m\in\cG}
  \lvert\langle
    f, \psi_{j,k,m}^h
  \rangle\rvert^2
&=
  \sum_{m\in\cG}
  \lvert g_{j,k}(m)\rvert^2
=
  \lVert g_{j,k}\rVert_F^2.
\end{align*}
Consequently, with Parseval's formula
\begin{align*}
  \lVert g_{j,k}\rVert_F^2
&=
  \frac{1}{MN}
  \lVert\hat{g}_{j,k}\rVert_F^2
  =
  \frac{1}{MN}
  \sum_{\omega\in\Omega}
  \lvert \hat{g}_{j,k}(\omega)\rvert^2
  \\
&=
  \frac{1}{MN}
  \sum_{\omega\in\Omega}
  \lvert
  \hat{\psi}
  (4^{-j}\omega_1,4^{-j}k\omega_1+2^{-j}\omega_2)\hat{f}(\omega_1,\omega_2)
  \rvert^2
  \\
&=
  \frac{1}{MN}
  \sum_{\omega\in\Omega}
  \lvert
  \hat{\psi}
  (4^{-j}\omega_1,4^{-j}k\omega_1+2^{-j}\omega_2)
  \rvert^2
  \lvert
  \hat{f}(\omega_1,\omega_2)
  \rvert^2.
\end{align*}
Analogously we obtain for the vertical part
\begin{equation*}
  \sum_{m\in\cG}
  \lvert\langle
    f,\psi_{j,k,m}^v
  \rangle\rvert^2
  =
  \frac{1}{MN}
  \sum_{\omega\in\Omega}
  \lvert
    \hat{\psi}
    (4^{-j}\omega_2,4^{-j}k\omega_2+2^{-j}\omega_1)
  \rvert^2
  \lvert
    \hat{f}(\omega_1,\omega_2)
  \rvert^2.
\end{equation*}
Using these results we can conclude for the seam-line part
\begin{align*}
  \sum_{m\in\cG}
  \lvert\langle f,\psi_{j,k,m}^{h\times  v} \rangle\rvert^2
&=
  \frac{1}{MN}
  \sum_{\omega\in\Omega}
  \lvert
    \hat{\psi}
    (4^{-j}\omega_1,4^{-j}k\omega_1+2^{-j}\omega_2)
  \rvert^2
  \lvert
    \hat{f}(\omega_1,\omega_2)
  \rvert^2
  \chi_{\cone^h}
\\
&\phantom{-}+
  \frac{1}{MN}
  \sum_{\omega\in\Omega}
  \lvert
    \hat{\psi}
    (4^{-j}\omega_2,4^{-j}k\omega_2+2^{-j}\omega_1)
  \rvert^2
  \lvert
    \hat{f}(\omega_1,\omega_2)
  \rvert^2
  \chi_{\cone^v}
\\
&\phantom{-}+
  \frac{1}{MN}
  \sum_{\omega\in\Omega}
  \lvert
    \hat{\psi}
    (4^{-j}\omega_1,4^{-j}k\omega_1+2^{-j}\omega_2)
  \rvert^2
  \lvert
    \hat{f}(\omega_1,\omega_2)
  \rvert^2
  \chi_{\cone^\times}.
\end{align*}
For the remaining low-pass part we get similarly
\begin{align*}
  \sum_{m\in\cG}
  \lvert\langle
  f, \phi_{m}
  \rangle\rvert^2
&=
  \frac{1}{MN}
  \sum_{m\in\cG}
  \lvert\langle
  \hat{f}, \hat{\phi}_{m}
  \rangle\rvert^2
  \\
&=
  \frac{1}{MN}
  \sum_{m\in\cG}
  \Bigl\lvert
  \sum_{\omega\in\Omega}
  \overline{
  \hat{\phi}_{m}(\omega)
  }
  \hat{f}(\omega)
  \Bigr\rvert^2
  \\
&=
  \frac{1}{MN}
  \sum_{m\in\cG}
  \Bigl\lvert
  \sum_{\omega\in\Omega}
  \e^{2\pi i\langle \omega,\binom{m_1/M}{m_2/N} \rangle}
  \hat{\phi}
  (\omega_1,\omega_2)\hat{f}(\omega_1,\omega_2)
  \Bigr\rvert^2
  \\
\intertext{with $\hat{g}_0(\omega) := \hat{\phi}(\omega_1,\omega_2)\hat{f}(\omega_1,\omega_2)$}
  \sum_{m\in\cG}
  \lvert\langle
  f, \phi_{m}
  \rangle\rvert^2
&=
  \sum_{m\in\cG}
  \Bigl\lvert
  \frac{1}{MN}
  \sum_{\omega\in\Omega}
  \e^{2\pi i\langle \omega,\binom{m_1/M}{m_2/N} \rangle}
  \hat{g}_0(\omega)
  \Bigr\rvert^2
  \\
&=
  \sum_{m\in\cG}
  \lvert
  g_0(m)
  \rvert^2
  =
  \lVert g_0\rVert_F^2
  =
  \frac{1}{MN}
  \lVert\hat{g}_0\rVert_F^2
  \\
&=
  \frac{1}{MN}
  \sum_{\omega\in\Omega}
  \lvert
  \hat{\phi}(\omega_1,\omega_2)\hat{f}(\omega_1,\omega_2)
  \rvert^2
  \\
&=
  \frac{1}{MN}
  \sum_{\omega\in\Omega}
  \lvert
  \hat{\phi}(\omega_1,\omega_2)
  \rvert^2
  \lvert
  \hat{f}(\omega_1,\omega_2)
  \rvert^2.
\end{align*}
Let us put the pieces together:
\begin{align*}
  C
&=
  \sum_{\kappa\in\{h,v\}}\sum_{j= 0}^{j_0-1}\sum_{k=-2^j+1}^{2^j-1}\sum_{m\in\cG}
  \lvert\langle f,\psi_{j,k,m}^\kappa \rangle\rvert^2
  +
  \sum_{j= 0}^{j_0-1}\sum_{k=\pm 2^j}\sum_{m\in\cG}
  \lvert\langle f,\psi_{j,k,m}^{h\times  v} \rangle\rvert^2
  +
  \sum_{m\in\cG}
  \lvert\langle f,\phi_{m} \rangle\rvert^2
\\
&=
  \sum_{j= 0}^{j_0-1}
  \sum_{k=-2^j+1}^{2^j-1}
  \frac{1}{MN}
  \sum_{\omega\in\Omega}
  \lvert
  \hat{\psi}
  (4^{-j}\omega_1,4^{-j}k\omega_1+2^{-j}\omega_2)
  \rvert^2
  \lvert
  \hat{f}(\omega_1,\omega_2)
  \rvert^2
\\
&\quad+
  \sum_{j= 0}^{j_0-1}
  \sum_{k=-2^j+1}^{2^j-1}
  \frac{1}{MN}
  \sum_{\omega\in\Omega}
  \lvert
  \hat{\psi}
  (4^{-j}\omega_2,4^{-j}k\omega_2+2^{-j}\omega_1)
  \rvert^2
  \lvert
  \hat{f}(\omega_1,\omega_2)
  \rvert^2
\\
&\quad+
  \sum_{j= 0}^{j_0-1}\sum_{k=\pm 2^j}
  \left(\frac{1}{MN}\right.
  \sum_{\omega\in\Omega}
  \lvert
  \hat{\psi}
  (4^{-j}\omega_1,4^{-j}k\omega_1+2^{-j}\omega_2)
  \rvert^2
  \lvert
  \hat{f}(\omega_1,\omega_2)
  \rvert^2
  \chi_{\cone^h}
\\
&\quad+
  \frac{1}{MN}
  \sum_{\omega\in\Omega}
  \lvert
  \hat{\psi}
  (4^{-j}\omega_2,4^{-j}k\omega_2+2^{-j}\omega_1)
  \rvert^2
  \lvert
  \hat{f}(\omega_1,\omega_2)
  \rvert^2
  \chi_{\cone^v}
\\
&\quad+
  \left.
  \frac{1}{MN}
  \sum_{\omega\in\Omega}
  \lvert
    \hat{\psi}
    (4^{-j}\omega_1,4^{-j}k\omega_1+2^{-j}\omega_2)
  \rvert^2
  \lvert
    \hat{f}(\omega_1,\omega_2)
  \rvert^2
  \chi_{\cone^\times}
  \right)
\\
&\quad+
  \frac{1}{MN}
  \sum_{\omega\in\Omega}
  \lvert
  \hat{\phi}(\omega_1,\omega_2)
  \rvert^2
  \lvert
  \hat{f}(\omega_1,\omega_2)
  \rvert^2.
\end{align*}
We can group the sums by the different sets and obtain
\begin{align*}
  C
&=
  \frac{1}{MN}
  \sum_{\omega\in\Omega}
  \sum_{j= 0}^{j_0-1}
  \sum_{k=-2^j}^{2^j}
  \lvert
  \hat{\psi}
  (4^{-j}\omega_1,4^{-j}k\omega_1+2^{-j}\omega_2)
  \rvert^2
  \lvert
  \hat{f}(\omega_1,\omega_2)
  \rvert^2
  \chi_{\cone^h}
\\
&\quad+
  \frac{1}{MN}
  \sum_{\omega\in\Omega}
  \sum_{j=0}^{j_0-1}
  \sum_{k=-2^j}^{2^j}
  \lvert
  \hat{\psi}
  (4^{-j}\omega_2,4^{-j}k\omega_2+2^{-j}\omega_1)
  \rvert^2
  \lvert
  \hat{f}(\omega_1,\omega_2)
  \rvert^2
  \chi_{\cone^v}
\\
&\quad+
  \frac{1}{MN}
  \sum_{\omega\in\Omega}
  \sum_{j= 0}^{j_0-1}
  \lvert
  \hat{f}(\omega_1,\omega_2)
  \rvert^2
  \lvert
  \underbrace{
  \hat{\psi}
  (4^{-j}\omega_1,0)
  }_{
  = 1
  }
  \rvert^2
  \chi_{\cone^\times}
  +
  \frac{1}{MN}
  \sum_{\omega\in\Omega}
  \lvert
  \hat{\phi}(\omega_1,\omega_2)
  \rvert^2
  \lvert
  \hat{f}(\omega_1,\omega_2)
  \rvert^2.
\end{align*}
Using the definition of $\hat{\psi}$ in \eqref{eq:horizontalShearlet} (or \eqref{eq:verticalShearlet} and \eqref{eq:seamlineShearlet}, respectively), we can conclude
\begin{align*}
  C
&=
  \frac{1}{MN}
  \sum_{\omega\in\cone^h}
  \lvert
  \hat{f}(\omega_1,\omega_2)
  \rvert^2
  \underbrace{
  \sum_{j= 0}^{j_0-1}
  \lvert
    \hat{\psi}_1(4^{-j}\omega_1)
  \rvert^2
  }_{
  \substack{
    = 1\text{ for }\lvert\omega_1\rvert\geq 1
    \\
    \text{ (see Theorem~\ref{thm:PropertyPsi1})}
  }
  }
  \underbrace{
  \sum_{k=-2^j}^{2^j}
  \lvert
    \hat{\psi}_2(2^j\frac{\omega_2}{\omega_1} + k)
  \rvert^2
  }_{
  = 1 \text{ (see Theorem~\ref{thm:PropertyPsi2})}
  }
\\
&\quad+
  \frac{1}{MN}
  \sum_{\omega\in\cone^v}
  \lvert
  \hat{f}(\omega_1,\omega_2)
  \rvert^2
  \underbrace{
  \sum_{j=0}^{j_0-1}
  \lvert
    \hat{\psi}_1(4^{-j}\omega_2)
  \rvert^2
  }_{
  = 1\text{ for }\lvert\omega_2\rvert\geq 1
  }
  \underbrace{
  \sum_{k=-2^j}^{2^j}
  \lvert
    \hat{\psi}_2(2^j\frac{\omega_1}{\omega_2} + k)
  \rvert^2
  }_{
  = 1
  }
\\
&\quad+
  \frac{1}{MN}
  \sum_{\omega\in\cone^\times}
  \lvert
  \hat{f}(\omega_1,\omega_2)
  \rvert^2
  +
  \frac{1}{MN}
  \sum_{\omega\in\Omega}
  \quad
  \lvert
  \underbrace{
    \hat{\phi}(\omega_1,\omega_2)
  }_{
    \makebox[0mm][c]{ 
    \begin{scriptsize}$= 1$ for $\omega\in[-\frac{1}{2},\frac{1}{2}]^2$\end{scriptsize}}
  }
  \rvert^2
  \lvert
  \hat{f}(\omega_1,\omega_2)
  \rvert^2.
\end{align*}
With the properties of $\hat{\psi}_1$ and $\hat{\psi}_2$ 
(see Theorems~\ref{thm:PropertyPsi1} and \ref{thm:PropertyPsi2}) 
we obtain two sums, 
one for the overlapping domain $\cone^\square$ (see \eqref{eq:overlappingDomain}) 
and one for the remaining part
\begin{align*}
  C
&=
  \frac{1}{MN}
  \sum_{\omega\in\Omega\setminus\cone^\square}
  \lvert
    \hat{f}(\omega_1,\omega_2)
  \rvert^2
\\
&\phantom{=\ }
  +
  \sum_{\omega\in\cone^\square}
  \lvert
  \hat{f}(\omega_1,\omega_2)
  \rvert^2
  \left(
    \sum_{j=0}^{j_0-1}
    \lvert
    \hat{\psi}_1(4^{-j}\omega_1)
    \rvert^2
    +
    \sum_{j=0}^{j_0-1}
    \lvert
    \hat{\psi}_1(4^{-j}\omega_2)
    \rvert^2
    +
    \lvert
    \hat{\phi}(\omega_1,\omega_2)
    \rvert^2
  \right)
\end{align*}
where we can split up the second sum as
\begin{align*}
  C
&=
  \frac{1}{MN}
  \sum_{\omega\in\Omega\setminus\cone^\square}
  \lvert
  \hat{f}(\omega_1,\omega_2)
  \rvert^2
\\
&\phantom{=\ }
  +
  \frac{1}{MN}
  \sum_{\omega\in\cone^h\cap\cone^\square}
  \lvert
  \hat{f}(\omega_1,\omega_2)
  \rvert^2
  \sin^2\left(\frac{\pi}{2}v(2\lvert\omega_1\rvert-1)\right)
\\
&\phantom{=\ }
  +
  \frac{1}{MN}
  \sum_{\omega\in\cone^v\cap\cone^\square}
  \lvert
  \hat{f}(\omega_1,\omega_2)
  \rvert^2
  \sin^2\left(\frac{\pi}{2}v(2\lvert\omega_2\rvert-1)\right)
\\
&\phantom{=\ }
  +
  \frac{1}{MN}
  \sum_{\omega\in\cone^h\cap\cone^\square}
  \lvert
  \hat{f}(\omega_1,\omega_2)
  \rvert^2
  \cos^2\left(\frac{\pi}{2}v(2\lvert\omega_1\rvert-1)\right)
\\
&\phantom{=\ }
  +
  \frac{1}{MN}
  \sum_{\omega\in\cone^v\cap\cone^\square}
  \lvert
  \hat{f}(\omega_1,\omega_2)
  \rvert^2
  \cos^2\left(\frac{\pi}{2}v(2\lvert\omega_2\rvert-1)\right).
\end{align*}
With the overlap (see \eqref{eq:overlapPsi1Phi}) we can continue
\begin{align*}
  C
&=
  \frac{1}{MN}
  \sum_{\omega\in\Omega\setminus\cone^\square}
  \lvert
  \hat{f}(\omega_1,\omega_2)
  \rvert^2
\\
&\quad+
  \frac{1}{MN}
  \sum_{\omega\in\cone^h\cap\cone^\square}
  \lvert
  \hat{f}(\omega_1,\omega_2)
  \rvert^2
  \underbrace{
  \left(
    \sin^2\left(\frac{\pi}{2}v(2\lvert\omega_1\rvert-1)\right) + \cos^2\left(\frac{\pi}{2}v(2\lvert\omega_1\rvert-1)\right)
  \right)
  }_{
  = 1\text{ (see \eqref{eq:overlapPsi1Phi})}
  }
\\
&\quad+
  \frac{1}{MN}
  \sum_{\omega\in\cone^v\cap\cone^\square}
  \lvert
  \hat{f}(\omega_1,\omega_2)
  \rvert^2
  \underbrace{
  \left(
    \sin^2\left(\frac{\pi}{2}v(2\lvert\omega_2\rvert-1)\right) + \cos^2\left(\frac{\pi}{2}v(2\lvert\omega_2\rvert-1)\right)
  \right)
  }_{
  = 1\text{ (see \eqref{eq:overlapPsi1Phi})}
  }.
\end{align*}
Finally, we obtain
\begin{equation*}
  C
=
  \frac{1}{MN}
  \sum_{\omega\in\Omega}
  \lvert
  \hat{f}(\omega_1,\omega_2)
  \rvert^2
=
  \frac{1}{MN}
  \lVert\hat{f}\rVert_F^2
=
  \lVert f\rVert_F^2.
  \qedhere
\end{equation*}
\end{proof}

\subsection{Inversion of the Shearlet Transform}
Having the discrete Parseval frame the inversion of the shearlet transform is straightforward: 
multiply each coefficient with the respective shearlet and sum over all involved parameters. 
As inversion formula we obtain
\begin{equation*}\index{shearlet transform!inverse}
  f
=
  \sum_{\kappa\in\{h,v\}}\sum_{j= 0}^{j_0-1}\sum_{k=-2^j+1}^{2^j-1}\sum_{m\in\cG}
  \langle f,\psi_{j,k,m}^\kappa \rangle \psi_{j,k,m}^\kappa
+
  \sum_{j= 0}^{j_0-1}\sum_{k=\pm 2^j}\sum_{m\in\cG}
  \langle f,\psi_{j,k,m}^{h\times  v} \rangle \psi_{j,k,m}^{h\times  v}
+
  \sum_{m\in\cG}
  \langle f,\phi_{m} \rangle \phi_{m}.
\end{equation*}
The actual computation of $f$ from given coefficients $c(\kappa,j,k,m)$ is done in Fourier domain. 
Due to the linearity of the Fourier transform this is
\begin{equation*}
  \hat{f}
=
  \sum_{\kappa\in\{h,v\}}\sum_{j= 0}^{j_0-1}\sum_{k=-2^j+1}^{2^j-1}\sum_{m\in\cG}
  \langle f,\psi_{j,k,m}^\kappa \rangle \hat{\psi}_{j,k,m}^\kappa
+
  \sum_{j=0}^{j_0-1}\sum_{k=\pm 2^j}\sum_{m\in\cG}
  \langle f,\psi_{j,k,m}^{h\times  v} \rangle \hat{\psi}_{j,k,m}^{h\times  v}
+
  \sum_{m\in\cG}
  \langle f,\phi_{m} \rangle \hat{\phi}_{m}.
\end{equation*}
We take a closer look at the part for the horizontal cone where we have
\begin{align*}
  \hat{f}(\omega)\chi_{\cone^h}
&=
  \sum_{j=0}^{j_0-1}
  \sum_{k=-2^j+1}^{2^j-1}
  \sum_{m\in\cG}
  \langle
  f,
  \psi_{j,k,m}
  \rangle
  \hat{\psi}^h_{j,k,m}(\omega)
  \\
&=
  \sum_{j=0}^{j_0-1}
  \sum_{k=-2^j+1}^{2^j-1}
  \sum_{m\in\cG}
  c(h,j,k,m)
  \e^{-2\pi i \langle \omega,\binom{m_1/M}{m_2/N} \rangle}
  \hat{\psi}
  (4^{-j}\omega_1,4^jk\omega_1+2^{-j}\omega_2).
\end{align*}
The inner sum can be interpreted as a two-dimensional discrete Fourier transform and is computed with a FFT and thus we may write
\begin{equation*}\label{eq:inverseHorizontal}
  \hat{f}(\omega)\chi_{\cone^h}
=
  \sum_{j=0}^{j_0-1}
  \sum_{k=-2^j+1}^{2^j-1}
  \fftn(c(h,j,k,\,\cdot\,))(\omega_1,\omega_2)
  \hat{\psi}
  (4^{-j}\omega_1,4^jk\omega_1+2^{-j}\omega_2).
\end{equation*}
Hence,
$\hat{f}$ can be computed by simple multiplications
of the Fourier-transformed shearlet coefficients
with the dilated and sheared spectra of $\psi$ and
afterwards summing over all ``parts'', scales $j$ and all shears $k$, respectively.
In detail we have
\begin{align}\label{eq:inverseShearletTransform}
\begin{split}
  \hat{f}(\omega_1,\omega_2)
  =&
  \fftn(c(0,\,\cdot\,))
  \hat{\phi}(\omega_1,\omega_2)
  \\
  +&
  \sum_{j=0}^{j_0-1}
  \sum_{k=-2^j+1}^{2^j-1}
  \fftn(c(h,j,k,\cdot))
  \hat{\psi}
  (4^{-j}\omega_1,4^{-j}k\omega_1+2^{-j}\omega_2)
  \\
  +&
  \sum_{j=0}^{j_0-1}
  \sum_{k=-2^j+1}^{2^j-1}
  \fftn(c(v,j,k,\cdot))
  \hat{\psi}
  (4^{-j}\omega_2,4^{-j}k\omega_2+2^{-j}\omega_1)
  \\
  +&
    \sum_{j=0}^{j_0-1}
  \sum_{k=\pm 2^j}
  \fftn(c(h\times v,j,k,\cdot))
  \hat{\psi}
  (4^{-j}\omega_1,4^{-j}k\omega_1+2^{-j}\omega_2).
\end{split}
\end{align}
Finally, we get $f$ itself by $f = \ifftn(\hat{f})$.

\subsection{Smooth Shearlets}\label{sec:smooth_shearlets}
In many theoretical (and sometimes also practical) purposes one needs smooth shearlets in Fourier domain because such shearlets provide well-localized shearlets in time domain.
In \cite{GL13} a new shearlet construction is proposed that provides smooth shearlets for all scales $a$ and respective shears $s$.
Our shearlets are smooth for all scales and for all shears $\lvert s\rvert\neq 1$.
Our ``diagonal'' shearlets $\psi^{h\times v}$ are continuous by construction but they are not smooth. 
This is illustrated in Figure~\ref{fig:diagonalShearletOld}.
\begin{figure}[htbp]
  \centering
  \subfloat[Diagonal shearlet in our construction.]{\label{fig:diagonalShearletOld}
  \imageWithBorder{0.4\textwidth}{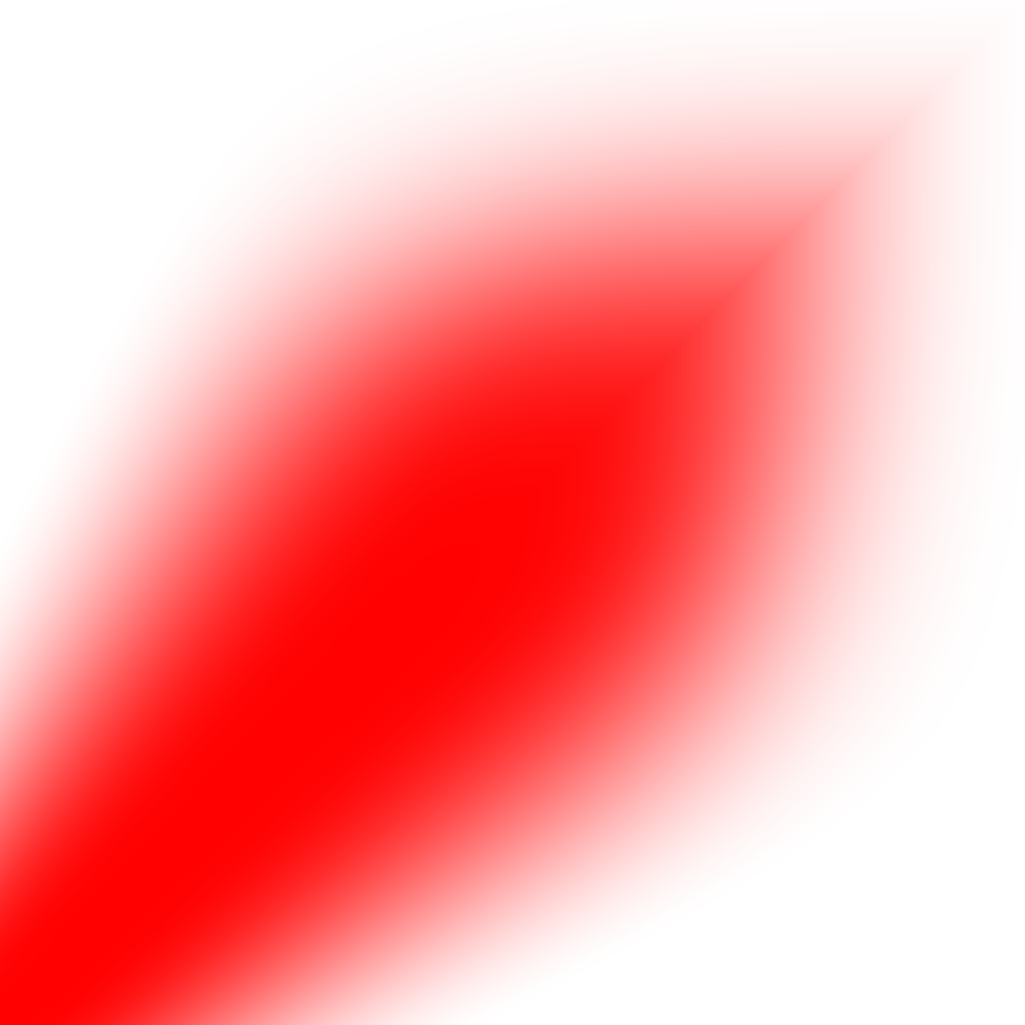}}
  \hspace{1cm}
  \subfloat[Diagonal shearlet in the new construction.]{\label{fig:diagonalShearletNew}
  \imageWithBorder{0.4\textwidth}{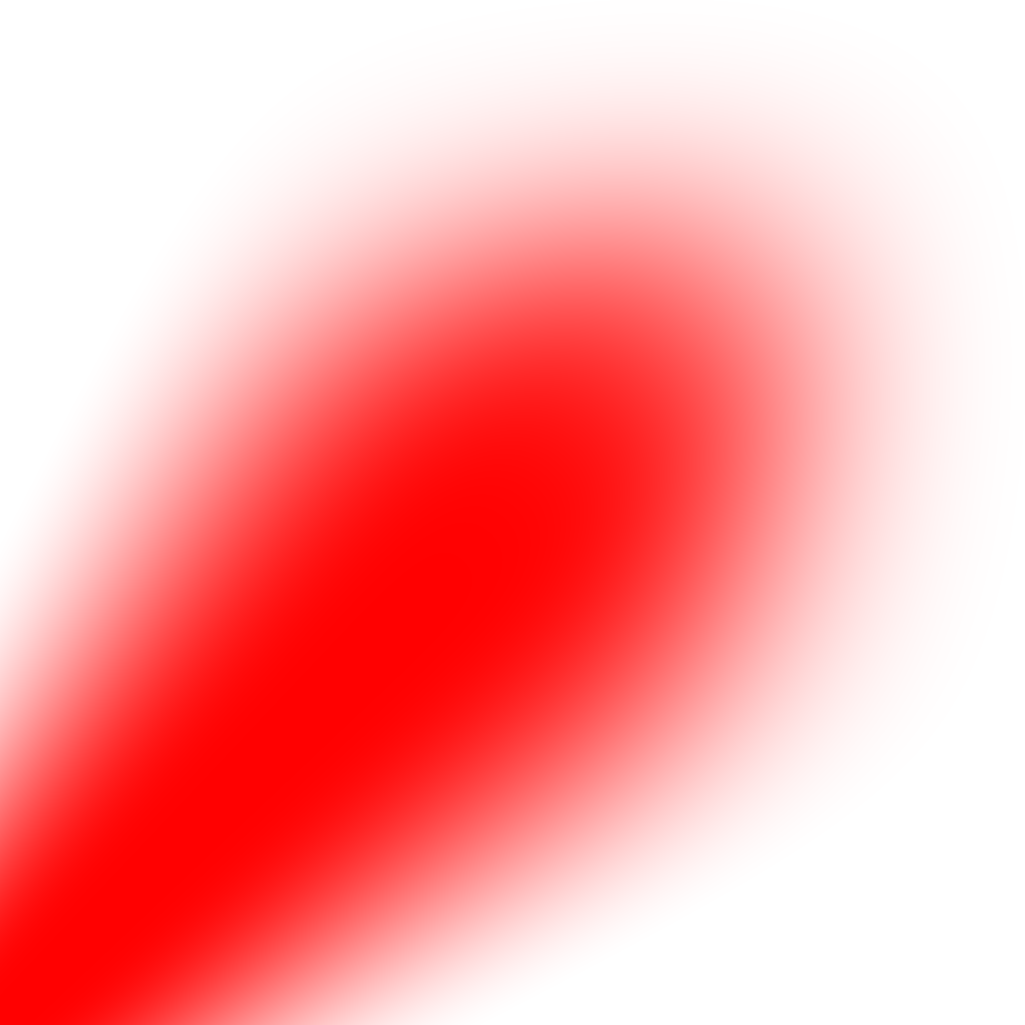}}
  \caption{Diagonal shearlets in our construction and in the new, smooth construction (Fourier domain).}
  \label{fig:diagonalShearletsOldVSNew}
\end{figure}

Obviously our construction is not smooth in points on the diagonal.
The new construction circumvents this with ``round'' corners.
To this end, we get back to the two different scaling functions which we discussed in Section~\ref{sec:scaling_function}.
While we chose the scaling function matching our cone-construction, the new construction is based on the tensor-product scaling function
$\hat{\Phi}(\omega)=\varphi(\omega_1)\varphi(\omega_2)$. 
We transfer the basic steps presented in \cite{GL13} to our setting. 
In fact, we only need to modify the function $\psi_1$.
We set
\begin{equation}\label{eq:newPsi1}\index{shearlet!smooth}
  \hat{\Psi}_1(\omega)
:=
  \sqrt{\hat{\Phi}^2(2^{-2}\omega_1,2^{-2}\omega_2)-\hat{\Phi}^2(\omega_1,\omega_2)}.
\end{equation}
Clearly, $\hat{\Psi}_1(\omega)$ fulfills $\sum_{j\geq 0}\hat{\Psi}_1^2(2^{-2j}\omega)=1$ for all
$\omega\in\Omega\setminus[-1,1]^2$.
We further have
\begin{equation*}
  \hat{\Phi}^2(\omega)
+
  \sum_{j\geq 0}\hat{\Psi}_1^2(2^{-2j}\omega)
=
  1
\quad
  \text{for all }\omega\in\Omega,
\end{equation*}
i.e., this setting provides also a Parseval frame.
Figure~\ref{fig:newPsi1} shows $\hat{\Psi}_1$.
Note that $\hat{\Psi}_1$ is supported in the Cartesian corona 
$[-4,4]^2\setminus [-\frac{1}{2},\frac{1}{2}]^2$.
\begin{figure}[htbp]
  \centering
  \imageWithBorder{0.4\textwidth}{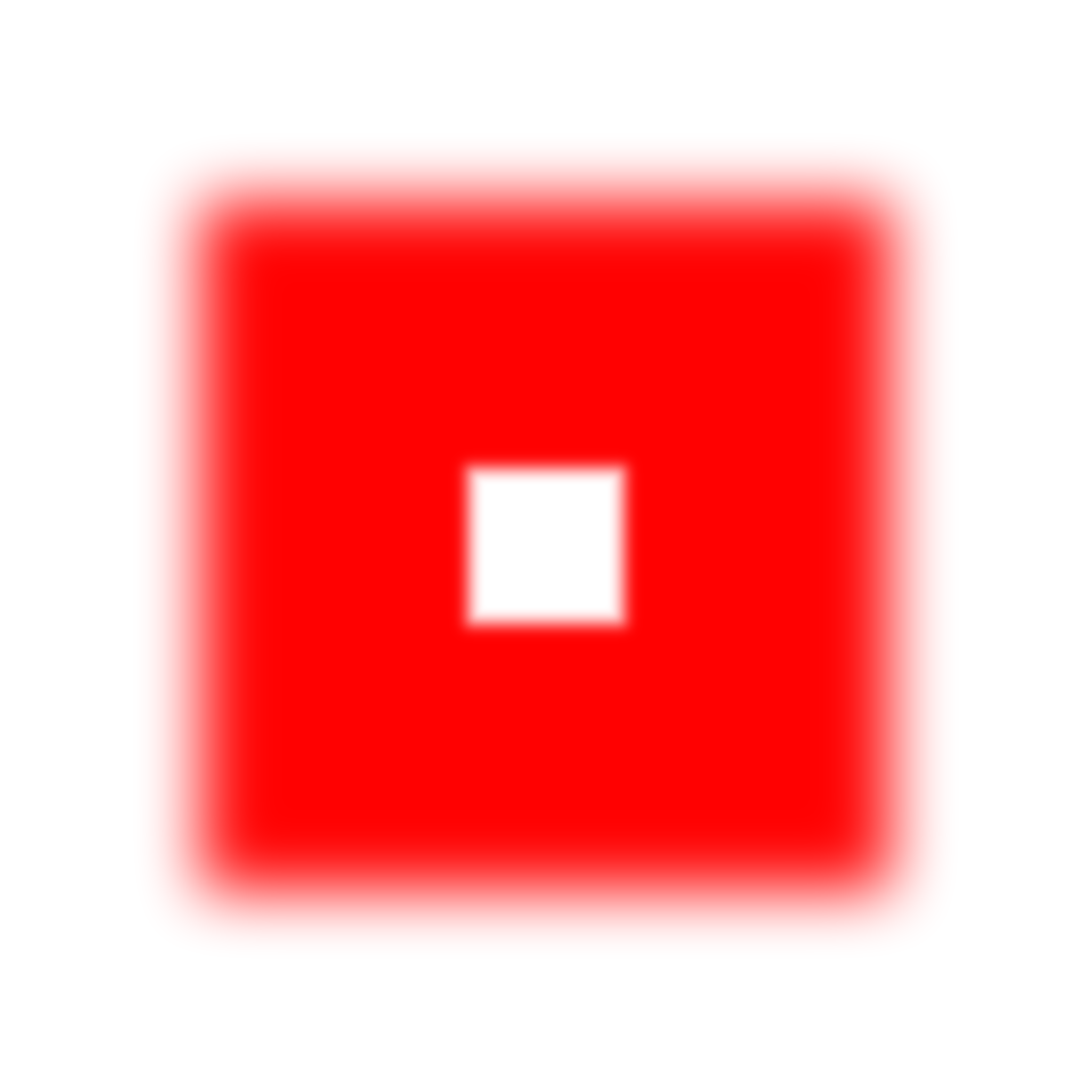}
  \caption[The new function Psi.]{The new function $\hat{\Psi}_1$ (see \eqref{eq:newPsi1}).}
  \label{fig:newPsi1}
\end{figure}
The full shearlet $\Psi$ reads similar as before:
\begin{equation} \label{eq:smoothPsi}
  \hat{\Psi}(\omega_1,\omega_2) 
= 
  \hat{\Psi}_1(\omega_1,\omega_2)\hat{\psi}_2\left(\frac{\omega_2}{\omega_1}\right).
\end{equation}
The construction of the horizontal, vertical and ``diagonal'' shearlets is the same as before, besides that the diagonal shearlets are smooth now, see Figure~\ref{fig:diagonalShearletNew}.

Before we examine the smoothness of the diagonal shearlets we discuss the differentiability of the remaining shearlets. 
Due to the construction we only need to analyze the functions $\hat{\psi}_1$ and $\hat{\psi_2}$. 
We have
\begin{equation*}
  \hat{\psi}_1(\omega_1)
=
  \sqrt{b^2(2\omega_1)+b^2(\omega_1)}
=
  \begin{cases}
  0                                                                             & \text{for }\lvert\omega_1\rvert \leq \frac{1}{2},  \\
  \sin\left(\frac{\pi}{2}v(2\lvert\omega_1\rvert-1)\right)                      & \text{for }\frac{1}{2} < \lvert\omega_1\rvert < 1, \\
  1                                                                             & \text{for }1 \leq \lvert\omega_1\rvert \leq 2,     \\
  \cos\left(\frac{\pi}{2}v\left(\frac{1}{2}\lvert\omega_1\rvert-1\right)\right) & \text{for }2 < \lvert\omega_1\rvert < 4 ,          \\
  0                                                                             & \text{for }\lvert\omega_1\rvert \geq 4
  \end{cases}
\end{equation*}
and with straightforward differentiation
\begin{equation*}
  \hat{\psi}_1'(\omega_1)
=
  \begin{cases}
  0                                                                             & \text{for }\lvert\omega_1\rvert \leq \frac{1}{2},  \\
  \pi 
  v'(2\lvert\omega_1\rvert-1)
  \cos\left(\frac{\pi}{2}v(2\lvert\omega_1\rvert-1)\right)                      & \text{for }\frac{1}{2} < \lvert\omega_1\rvert < 1, \\
  0                                                                             & \text{for }1 \leq \lvert\omega_1\rvert \leq 2,     \\
  -\frac{\pi}{2}  
  v'\left(\frac{1}{2}\lvert\omega_1\rvert-1\right)
  \sin\left(\frac{\pi}{2}v\left(\frac{1}{2}\lvert\omega_1\rvert-1\right)\right) & \text{for }2 < \lvert\omega_1\rvert < 4,           \\
  0                                                                             & \text{for }\lvert\omega_1\rvert \geq 4.
  \end{cases}
\end{equation*}
The derivative is continuous if and only if the values at the critical points 
$\{\tfrac{1}{2},1,2,4\}$ coincide (for symmetry reasons we can restrict ourselves to the positive range). 
We have  
$v'\left(2\cdot\frac{1}{2}-1\right) = v'(0) = v'\left(\frac{1}{2}\cdot 2-1\right)$ 
and 
$v'(2\cdot 1-1) = v'(1) v'\left(\frac{1}{2}\cdot 4-1\right)$.
Consequently, $\hat{\psi}_1'$ is continuous and in particular $\hat{\psi}_1\in C^1$ if and only if $v'(0) = 0 = v'(1)$.
By induction we see 
$\hat{\psi}_1^{(n)}\in C^{n}$ 
if and only if 
$v^{(n)}(0)=0$ and $v^{(n)}(1)=0$, $n\geq 1$.

For our $v$ in \eqref{eq:v} we have $v^{(3)}(1) = 0$ but $v^{(4)}(1)\neq 0$, i.e., $\hat{\psi}_1\in C^3$.

Similarly, we obtain for $\hat{\psi}_2$ that
\begin{equation*}
  \frac{\partial\hat{\psi}_2}{\partial \omega_1}(\omega_1,\omega_2)
=
  \begin{cases}
  -\frac{\omega_2}{\omega_1^2}v'(1+\frac{\omega_2}{\omega_1})\frac{1}{2\sqrt{v(1+\frac{\omega_1}{\omega_2})}} & \text{for }\frac{\omega_2}{\omega_1} \leq 0, \\
   \frac{\omega_2}{\omega_1^2}v'(1-\frac{\omega_2}{\omega_1})\frac{1}{2\sqrt{v(1-\frac{\omega_1}{\omega_2})}} & \text{for }\frac{\omega_2}{\omega_1} > 0,
  \end{cases}
\end{equation*}
where we see that $v'(0)=0$ in order for the derivative to exist. 
Thus, the shearlet $\hat{\psi}$ is $C^n$ if $v^{(n)}(0) = v^{(n)}(1) = 0$. 
This is also valid for the dilated and sheared shearlet $\hat{\psi}_{j,k,m}^h$ (and $\hat{\psi}_{j,k,m}^v$) for $\lvert k \rvert\neq 2^j$. 
We take a closer look at the diagonal shearlet for $k=-2^j$ where we have
\begin{equation*}
  \hat{\psi}_{j,-2^j,m}^{h\times v}(\omega)
=
  \begin{cases}
  \hat{\psi}_{j,-2^j,m}^h(\omega),      &\text{for }\omega\in\cone^h,     \\
  \hat{\psi}_{j,-2^j,m}^v(\omega),      &\text{for }\omega\in\cone^v,     \\
  \hat{\psi}_{j,-2^j,m}^\times(\omega), &\text{for }\omega\in\cone^\times.
  \end{cases}
\end{equation*}
Naturally, $\hat{\psi}_{j,-2^j,m}^{h\times v}$ is smooth for $\omega\in\cone^h$ and $\omega\in\cone^v$. 
Additionally, $\hat{\psi}_{j,-2^j,m}^{h\times v}(\omega)$ is continuous at the seam lines, 
but not differentiable there since we have for the partial derivatives of $\hat{\psi}_{j,-2^j,m}^{h}$ and $\hat{\psi}_{j,-2^j,m}^{v}(\omega)$ that
\begin{align*}
  \frac{\partial\hat{\psi}_{j,-2^j,m}^h}{\partial \omega_1}(\omega)
&=
  \hat{\psi}_2\left(2^j\left(\frac{\omega_2}{\omega_1} - 1 \right)\right)
  \e^{-\frac{2\pi i}{N}(\omega_1 m_1 + \omega_2 m_2)}\cdot
  2^{-2j}\frac{\partial\hat{\psi}_1}{\partial\omega_1}(2^{-2j}\omega_1)
  \\
&\quad  +
  \hat{\psi}_1(2^{-2j}\omega_1)
  \e^{-\frac{2\pi i}{N}(\omega_1 m_1 + \omega_2 m_2)}
  \left(-2^j\frac{\omega_2}{\omega_1^2}\right)
  \frac{\partial\hat{\psi}_2}{\partial\omega_1}\left(2^j\left(\frac{\omega_2}{\omega_1}-1\right)\right)
  \\
&\quad  +
  \hat{\psi}_1(2^{-2j}\omega_1)
  \hat{\psi}_2\left(2^j\left(\frac{\omega_2}{\omega_1}-1\right)\right)
  \left(-\frac{2\pi i}{N} m_1\right)
  \e^{-\frac{2\pi i}{N}\left(\omega_1 m_1 + \omega_2 m_2\right)}
\intertext{and}
  \frac{\partial\hat{\psi}_{j,-2^j,m}^v}{\partial \omega_1}(\omega)
&=
  \hat{\psi}_1(2^{-2j}\omega_2)
  \left(
  \e^{-\frac{2\pi i}{N}(\omega_1 m_1 + \omega_2 m_2)}
  \left(\frac{2^j}{\omega_2}\right)
  \frac{\partial\hat{\psi}_2}{\partial\omega_1}\left(2^j\left(\frac{\omega_1}{\omega_2}-1\right)\right)
  \right.
  \\
&\quad  +
  \left.
  \hat{\psi}_2\left(2^j\left(\frac{\omega_1}{\omega_2}-1\right)\right)
  \left(-\frac{2\pi i}{N} m_1\right)
  \e^{-\frac{2\pi i}{N}(\omega_1 m_1 + \omega_2 m_2)}
  \right).
\end{align*}
For $\omega_1 = \omega_2$ this reads
\begin{align*}
  \frac{\partial\hat{\psi}_{j,-2^j,m}^h}{\partial \omega_1}(\omega_1,\omega_1)
&=
  \e^{-\frac{2\pi i}{N}\omega_1 (m_1 + m_2)}
  \Biggl(
  \underbrace{\hat{\psi}_2(0)}_{=1}
  2^{-2j}\frac{\partial\hat{\psi}_1}{\partial\omega_1}(2^{-2j}\omega_1)
  \Bigr.
  \\
&\quad  -
  \hat{\psi}_1(2^{-2j}\omega_1)
  \left(\frac{2^j}{\omega_1}\right)
  \underbrace{\frac{\partial\hat{\psi}_2}{\partial\omega_1}(0)}_{=0}
  -
  \Bigl.
  \hat{\psi}_1(2^{-2j}\omega_1)
  \underbrace{\hat{\psi}_2(0)}_{=1}
  \left(\frac{2\pi i}{N} m_1\right)
  \Biggr)
  \\
&=
  \e^{-\frac{2\pi i}{N}\omega_1 (m_1 + m_2)}
  \left(
  2^{-2j}\frac{\partial\hat{\psi}_1}{\partial\omega_1}(2^{-2j}\omega_1)
  -
  \left(\frac{2\pi i}{N} m_1\right)
  \hat{\psi}_1(2^{-2j}\omega_1)
  \right)
\intertext{and}
  \frac{\partial\hat{\psi}_{j,-2^j,m}^v}{\partial \omega_1}(\omega_1,\omega_1)
&=
  \e^{-\frac{2\pi i}{N}\omega_1 (m_1 + m_2)}\!
  \Biggl(\!
  \hat{\psi}_1(2^{-2j}\omega_1)\!
  \left(\frac{2^j}{\omega_1}\right)\!
  \underbrace{\frac{\partial\hat{\psi}_2}{\partial\omega_1}(0)}_{=0}
  -
  \hat{\psi}_1(2^{-2j}\omega_1)
  \underbrace{\hat{\psi}_2(0)}_{=1}\!
  \left(\frac{2\pi i}{N} m_1\right)\!\!
  \Biggr)
  \\
&=
  \e^{-\frac{2\pi i}{N}\omega_1 (m_1 + m_2)}
  \left(
  -
  \left(\frac{2\pi i}{N} m_1\right)
  \hat{\psi}_1(2^{-2j}\omega_1)
  \right)
  .
\end{align*}
Obviously, both derivatives do not coincide, consequently, our shearlet construction is not smooth for the diagonal shearlets. 
Considering the new construction, we get for the both partial derivatives
\begin{align*}
  \frac{\partial\hat{\Psi}_{j,-2^j,m}^h}{\partial \omega_1}(\omega)
&=
  2^{-2j}
  \frac{\partial\hat{\Psi}_1}{\partial\omega_1}(2^{-2j}\omega)
  \hat{\psi}_2\left(2^j\left(\frac{\omega_2}{\omega_1} - 1 \right)\right)
  \e^{-\frac{2\pi i}{N}(\omega_1 m_1 + \omega_2 m_2)}
  \\
&\quad  -
  2^j\frac{\omega_2}{\omega_1^2}
  \hat{\Psi}_1(2^{-2j}\omega)
  \frac{\partial\hat{\psi}_2}{\partial\omega_1}\left(2^j\left(\frac{\omega_2}{\omega_1}-1\right)\right)
  \e^{-\frac{2\pi i}{N}(\omega_1 m_1 + \omega_2 m_2)}
  \\
&\quad  -
  \frac{2\pi i}{N} m_1
  \hat{\Psi}_1(2^{-2j}\omega)
  \hat{\psi}_2\left(2^j\left(\frac{\omega_2}{\omega_1}-1\right)\right)
  \e^{-\frac{2\pi i}{N}(\omega_1 m_1 + \omega_2 m_2)}
\intertext{and}
  \frac{\partial\hat{\Psi}_{j,-2^j,m}^v}{\partial \omega_1}(\omega)
&=
  2^{-2j}
  \frac{\partial\hat{\Psi}_1}{\partial\omega_1}(2^{-2j}\omega)
  \hat{\psi}_2\left(2^j\left(\frac{\omega_1}{\omega_2} - 1 \right)\right)
  \e^{-\frac{2\pi i}{N}(\omega_1 m_1 + \omega_2 m_2)}
  \\
&\quad  +
  \left(\frac{2^j}{\omega_2}\right)
  \hat{\Psi}_1(2^{-2j}\omega)
  \frac{\partial\hat{\psi}_2}{\partial\omega_1}\left(2^j\left(\frac{\omega_1}{\omega_2}-1\right)\right)
  \e^{-\frac{2\pi i}{N}(\omega_1 m_1 + \omega_2 m_2)}
  \\
&\quad  -
  \frac{2\pi i}{N} m_1
  \hat{\Psi}_1(2^{-2j}\omega)
  \hat{\psi}_2\left(2^j\left(\frac{\omega_1}{\omega_2}-1\right)\right)
  \e^{-\frac{2\pi i}{N}(\omega_1 m_1 + \omega_2 m_2)}.
\end{align*}
With $\omega_1=\omega_2$ we compute further
\begin{align*}
  \frac{\partial\hat{\Psi}_{j,-2^j,m}^h}{\partial \omega_1}(\omega_1,\omega_1)
&=
  2^{-2j}
  \frac{\partial\hat{\Psi}_1}{\partial\omega_1}(2^{-2j}\omega_1,2^{-2j}\omega_1)
  \underbrace{\hat{\psi}_2(0)}_{=1}
  \e^{-\frac{2\pi i}{N}\omega_1 (m_1 + m_2)}
  \\
&\quad  -
  2^j\frac{\omega_2}{\omega_1^2}
  \hat{\Psi}_1(2^{-2j}\omega_1,2^{-2j}\omega_1)
  \underbrace{\frac{\partial\hat{\psi}_2}{\partial\omega_1}(0)}_{=0}
  \e^{-\frac{2\pi i}{N}\omega_1 (m_1 + m_2)}
  \\
&\quad  -
  \frac{2\pi i}{N} m_1
  \hat{\Psi}_1(2^{-2j}\omega_1,2^{-2j}\omega_1)
  \underbrace{\hat{\psi}_2(0)}_{=1}
  \e^{-\frac{2\pi i}{N}\omega_1 (m_1 + m_2)}
\intertext{and}
  \frac{\partial\hat{\Psi}_{j,-2^j,m}^v}{\partial \omega_1}(\omega_1,\omega_1)
&=
  2^{-2j}
  \frac{\partial\hat{\Psi}_1}{\partial\omega_1}(2^{-2j}\omega_1,2^{-2j}\omega_1)
  \underbrace{\hat{\psi}_2(0)}_{=1}
  \e^{-\frac{2\pi i}{N}\omega_1 (m_1 + m_2)}
  \\
&\quad  +
  \left(\frac{2^j}{\omega_2}\right)
  \hat{\Psi}_1(2^{-2j}\omega_1,2^{-2j}\omega_1)
  \underbrace{\frac{\partial\hat{\psi}_2}{\partial\omega_1}(0)}_{=0}
  \e^{-\frac{2\pi i}{N}\omega_1 (m_1 + m_2)}
  \\
&\quad  -
  \frac{2\pi i}{N} m_1
  \hat{\Psi}_1(2^{-2j}\omega_1,2^{-2j}\omega_1)
  \underbrace{\hat{\psi}_2(0)}_{=1}
  \e^{-\frac{2\pi i}{N}\omega_1 (m_1 + m_2)}.
\end{align*}
It can be easily seen that both derivatives coincide if and only if $\frac{\partial\hat{\psi}_2}{\partial\omega_1}(0)=0$ since the second term vanishes. 
The same result is obtained for the partial derivative with respect to $\omega_2$. 
Consequently, the new construction is smooth everywhere.

\begin{remark}\label{remark:construction_v}
As we have seen the smoothness of the shearlets depends strongly on the smoothness of the function $v$. 
The function $v$ we have used was constructed to provide shearlets in $C^3$. 
The first three derivatives at $0$ and $1$ should be equal to zero, i.e., $v'(x) = c x^3(x-1)^3$. 
With $v(1)=1$ and straightforward integration we obtain $c=-140$ and the function $v$ as in \eqref{eq:v}.

Higher grades of smoothness are easily constructed with a new function $v$ by setting $v'(x) = c x^k(x-1)^k$. 
These shearlets would be in $C^k$.
To obtain shearlets in $C^\infty$ one needs another function $v$ with $v^{(n)}(0) = 0 = v^{(n)}(1)$ for all $n\geq 1$. 
The authors of \cite{MP10} propose
\begin{equation*}
  v(x)
=
  \frac{s(x-1)}{s(x-1) + s(x)}
\quad\text{where}\quad
  s(x)
=
  \e^{-\left(\frac{1}{(1+x)^2} + \frac{1}{(1-x)^2}\right)}.
\end{equation*}
\end{remark}

Note that due to our discretization $t=m$ we have a unique handling of both the horizontal and the vertical cone and do not have to make any adjustments for the diagonal shearlets.
This is in contrast to the discretization $t=A_{a_j}S_{s_{jk}}m$ where one has different discretizations for $t$ in the horizontal and in the vertical cone. 
Consequently, some adjustments for the diagonal shearlets are necessary.

Smooth shearlets are well-located in time. 
To show the difference we present in 
Figure~\ref{fig:diagonalShearletTime_old} the ``old'' shearlet in time domain and in comparison in 
Figure~\ref{fig:diagonalShearletTime_new} the new construction in time domain.
\begin{figure}[htbp]
  \centering
  \subfloat[Diagonal shearlet in our construction.]{\label{fig:diagonalShearletTime_old}
  \imageWithBorder{0.4\textwidth}{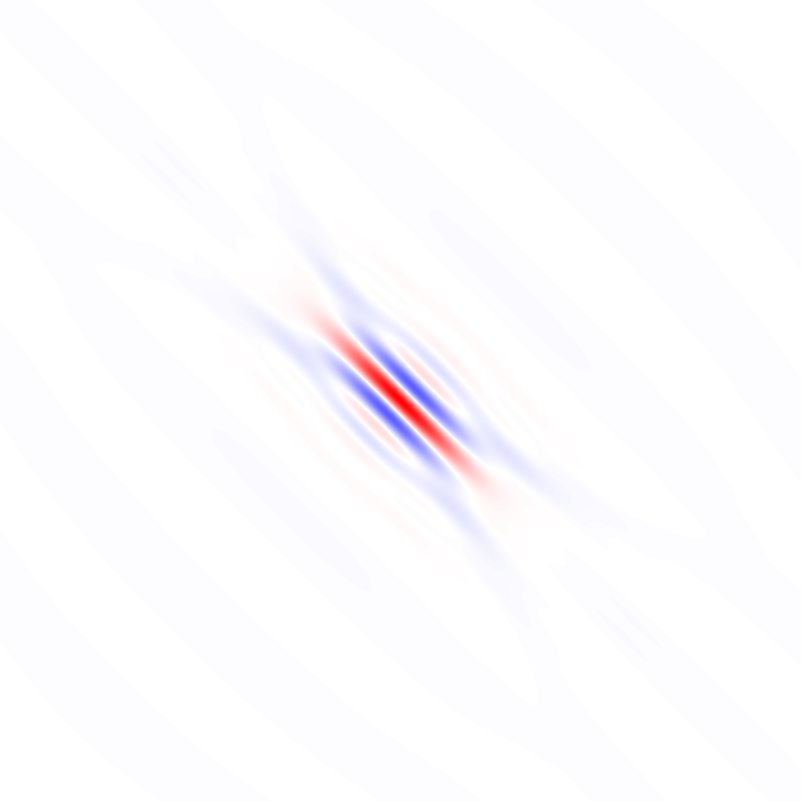}}
  \hspace{1cm}
  \subfloat[Diagonal shearlet in the new construction.]{\label{fig:diagonalShearletTime_new}
  \imageWithBorder{0.4\textwidth}{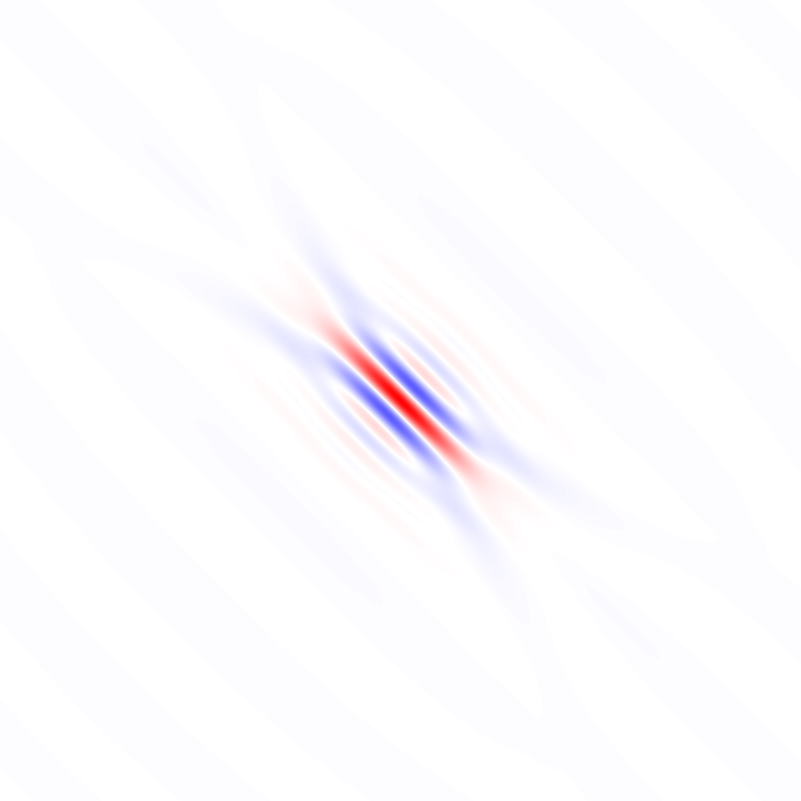}}
  \caption{Diagonal shearlets in our construction and in the new, smooth construction (time domain).}
  \label{fig:diagonalShearletsTimeOldVSNew}
\end{figure}
The non-smooth construction is slightly worse located. 
The shearlet coefficients of, e.g., a diagonal line, show only marginal differences such that for most practical applications it is irrelevant which construction is used.

\subsection{FFST: Fast Finite Shearlet Transform}
The implementation of the shearlet transform follows very closely the details described in the previous sections.
As we see in \eqref{eq:shearletTransform} and \eqref{eq:inverseShearletTransform}
for both the transform and the inverse transform the spectra of $\psi$ and $\phi$ are needed for all scales $j$ and all shears $s$ on
``all'' sets.
We precompute these spectra to use them for both directions of the transform.

Having the spectra the shearlet coefficients can easily be computed using \eqref{eq:shearletTransform} 
and also the reconstruction for given coefficients is straightforward using \eqref{eq:inverseShearletTransform}.

We will also comment on the efficient (or at least easily accessible) storage of the computed coefficients.

\subsubsection{Computation of Spectra}
We compute the spectra $\hat{\psi}_{j,k,m}$ as discrete versions of the continuous functions, i.e., 
we compute the values on a finite discrete lattice $\Xi \subset [-Y,Y] \times [-X,X]$ of size $M\times N$. 
Let without loss of generality $X\geq Y$ such that we focus on $X$ in the following.
The resulting $M\times N$ matrix will be element-wise multiplied with the Fourier transform of the given image.
This image is given as samples on (another) grid $\Omega$ and it might not be reasonable to choose $\Xi = \Omega$. To the contrary we will compute $\Xi$ and interpret the given point of the image as evaluated on the grid $\Xi$.

The choice of the range of the grid is not straightforward. 
Since we only consider finite images and a finite number of scales we have to ensure that the frame property remains valid, i.e., 
for all $\omega$ the sum $\sum_{j = 0}^{j_0-1}\lvert\hat{\psi}_1(2^{-2j}\omega)\rvert^2$ still equals $1$.

Recall that for $\omega\geq 0$: $\supp\hat{\psi}_1(\omega) = [\frac{1}{2},4] = [2^{-1},2^2]$ and $\hat{\psi}_1 \equiv 1$ for $\omega\in [1,2]=[2^0,2^1]$.
For the scaled version we further have $\supp\hat{\psi}_1(2^{-2j}\omega) = [2^{2j-1},2^{2j+2}]$ and $\hat{\psi}_1(2^{-2j}\omega)= 1$ for $\omega\in [2^{2j},2^{2j+1}]$.
We obtain
\begin{equation}\label{eq:sum_over_finite_scales}
  \sum_{j = 0}^{j_0-1}
  \lvert\hat{\psi}_1(2^{-2j}\omega)\rvert^2
  =
  \begin{cases}
    0                                                        & \text{for } \lvert\omega\rvert\leq \frac{1}{2},                \\
    \sin^2\left(\frac{\pi}{2}v(2\omega-1)\right)             & \text{for } \frac{1}{2}<\lvert\omega\rvert<1,                  \\
    1                                                        & \text{for } 1\leq \lvert\omega\rvert\leq 2^{2(j_0-1)+1},       \\
    \cos^2\left(\frac{\pi}{2}v(2^{-2j_0-1}\omega - 1)\right) & \text{for } 2^{2(j_0-1)+1}<\lvert\omega\rvert< 2^{2(j_0-1)+2}, \\
    0                                                        & \text{for } \lvert\omega\rvert\geq 2^{2(j_0-1)+2}.
  \end{cases}
\end{equation}
Thus, the sum is equal to 1 in a wide range of $\omega$.
As described above the part for $\lvert\omega\rvert<1$ where the sum increases from $0$ to $1$ matches with the decreasing part of the scaling function (compare \eqref{eq:overlapPsi1Phi}).
But we also have a decay for $\lvert\omega\rvert> 2^{2(j_0-1)+1}$ without compensation to $1$ due to the limited number of scales.
Keeping this decay would violate the frame property.

Figure~\ref{fig:shearlet_overlap} shows the dilated $\hat{\psi}_1$ for the highest considered scale $j=j_0-1$ (solid line) and for the two neighboring scales $j=j_0-2$ (dotted line) and $j=j_0$ (dashed line).
\begin{figure}[htbp]
  \centering
  \includegraphics[width=0.8\textwidth]{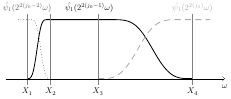}
  \caption{Overlapping shearlets.}
  \label{fig:shearlet_overlap}
\end{figure}
Now, the question is which of these $\omega$ our grid $\Xi$ should cover. 
We have marked four possible points $X_i$, $i=1,\ldots,4$, that we want to discuss in detail. 
The last one, $X_4$ might be a natural choice since the largest selected scale would be completely covered. 
However, this would destroy the frame property as the decay between $X_3$ and $X_4$ is not compensated by a higher scale.
Possible and recommended  is any choice between $X_2$ and $X_3$. The latter one is the last point for which the shearlet is equal to $1$ 
and the first one $X_2$ is the first point where the shearlet is equal to $1$. 
Choosing $X_3$ provides the largest possible finest scale whereas $X_2$ provides the smallest (reasonable) finest scale. 
From a theoretical point of view any point between $X_1$ and $X_2$ is also possible but in this case the finest scale would be really small and additionally the second finest scale would also get smaller. Finally, choosing $X_1$ would reduce the number of scales since the finest scale is not considered at all.

Following \eqref{eq:sum_over_finite_scales} the chosen $X$ must be less or equal than $2^{2(j_0-1)+1} = 2^{2j_0-1}$ ($X_3$ in Figure~\ref{fig:shearlet_overlap}) for the decay to be ``outside'' the image and on the other hand $X$ must be greater or equal than $2^{2(j_0-2)+2} = 2^{2j_0-2}$ ($X_2$ in Figure~\ref{fig:shearlet_overlap}).
To analyze the relation between grid size and image size we set $X=2^{2j_0-1}$.

To compute the grid and the spectra we assume that $M=2m+1$ and $N=2n+1$ are odd.
Then, we have a symmetric grid around $0$, hence, we have $n$ (respectively $m$) grid points in the negative range and $n$ (respectively $m$) grid points in the positive range and one grid point at $0$.
If the given $N$ (or $M$) is even we increase it by $1$. 
After computing grid and spectra we neglect the last row and/or column to retain the original image size.
We compute the number of considered scales based on the larger dimension. 
This leads to rectangular frequency bands. Without loss of generality we assume that $N=\max\{M,N\}$.
Having $n=\frac{N-1}{2}$ grid points for the positive range and the maximal distance between two grid points $\Delta=1$ we get
\begin{equation*}
  X = 2^{2j_0-1}=\frac{N-1}{2}
\Longrightarrow
  j_0 = \frac{1}{2}\log_2(N-1).
\end{equation*}
We set for the number of scales (as used above) $j_0:=\lfloor \frac{1}{2}\log_2(N)\rfloor$.
In the following table we list the number of scales for all image sizes $N=4\ldots,1024$:
\begin{center}
\begin{tabular}{l|c|c|c|c|c}
$N$ & $4,\ldots,15$ & $16,\ldots,63$ & $64,\ldots,255$ & $256,\ldots,1023$ & $1024$  \\\hline
$j_0$ & 1 & 2 & 3 & 4 & 5
\end{tabular}.
\end{center}

With $j_0$ fixed we can compute the distance $\Delta$ between two points.
As we have seen the largest value in the grid should be $X=2^{2j_0-1}$. 
For an odd $N$ the grid ranges from $[-X,X]$ and for an even grid we have the range $[-X,X) = [-X,X-\Delta]$. 
We assume again an odd $N$, such that the interval $[-X,X]$ should be divided in $N$
grid points including the bounds $-X$ and $X$ leading to $N-1$ subintervals and
\begin{equation*}
  \Delta
=
  \frac{2\cdot X}{N-1}
=
  \frac{2\cdot 2^{2j_0-1}}{N-1}
=
  \frac{2^{2j_0}}{N-1}
\end{equation*}
where $\Delta=1$ if $N=2^{2j_0}+1$ and $\Delta > \frac{1}{4}$, i.e., $\frac{1}{4}<\Delta\leq 1$. 
Thus, for the same number of scales we obtain a better resolution with increasing image size.

It seems a little awkward to discretize $\hat{f}$ and $\hat{\psi}$ on different lattices. 
However, with this auxiliary construction the definition and properties of the shearlet $\hat{\psi}$ are much more convenient. 
Additionally, the shearlets are now independent of the parameter $\Delta$ (or other grid properties). 
Anyway, to circumvent the imperfection with two lattices we could formerly also discretize $\hat{\psi}(\Delta\omega)$ on $\Omega$ instead of $\hat{\psi}(\omega)$ on $\Xi$ and obtain the same spectra.

\subsubsection{Indexing}\label{section:indexing}
To reduce the number of parameters
we introduce one index $i$ which replaces the parameters $\kappa$, $j$ and $k$.
We set $i=1$ for the low-pass part. We continue with the lowest frequency band, i.e., $j=0$.
The different cones and shear parameters represent the different directions of the shearlet.
Imagine the shearlet in Fourier domain to be a line which is rotated counter-clockwise around the center and assign the index $i$ accordingly.
In each frequency band we start in the horizontal position,
i.e., $\kappa = h$ and $k = 0$, and increase $i$ by one.
For each $k = -1,\ldots, -2^j+1$ we continue increasing the index by one.
The line is now almost in a 45\textdegree\ angle (or a line with slope $1$).
The next index is assigned to the combined shearlet
``$h\times v$'' at the seam line which covers the ``diagonal'' for $k=-2^j$.
We continue in the vertical cone for $k=-2^j+1,\ldots,2^j-1$.
Next is again the combined shearlet for $k=2^j$.
With decreasing shear, i.e., $k=2^j,\ldots,1$,
we finish the indexing for this frequency band and continue with the next one.
Figure~\ref{fig:frequency_tiling_indices} illustrates the indices for the first two scales.
\begin{figure}[htbp]
  \centering
  \includegraphics[width=0.6\textwidth]{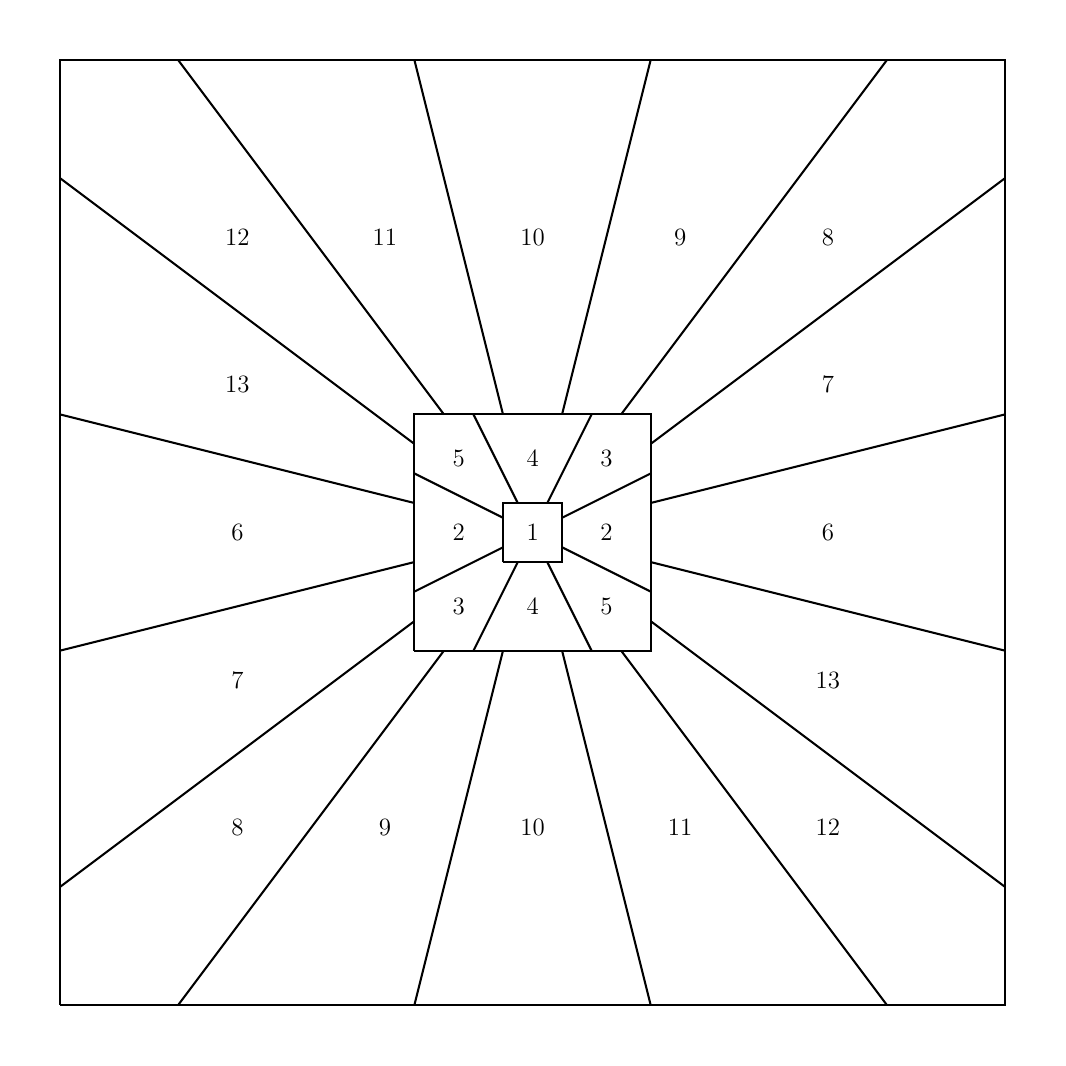}
  \caption{Frequency tiling with respective indices $i$.}
  \label{fig:frequency_tiling_indices}
\end{figure}

Summarizing the described procedure we always have one index for the low-pass part.
In each frequency band we have two indices (or shearlets) for the diagonals ($k=\pm 2^j$)
and in each cone we have $1 + 2\cdot(2^j-1) = 2^{j+1}-1$ shearlets.
For scale $j$ we have $2\cdot(2^{j+1}-1) + 2 = 2^{j+2}$ shearlets.
The following table lists the number of shearlets for each $j$:
\begin{center}
  \begin{tabular}{c|c|c|c}
    low-pass & $j=0$ & $j=1$ & $j=2$ \\ \hline
    1        & 4     & 8     & 16    
  \end{tabular}.
\end{center}
With a maximum scale $j_0-1$ the number of all indices $\eta$ is
\begin{equation*} 
  \eta 
= 
  1 + \sum_{j=0}^{j_0-1} 2^{j+2} 
= 
  1 + 4\sum_{j=0}^{j_0-1}2^j 
= 
  1 + 4\cdot (2^{j_0}-1) 
= 
  2^{j_0+2} - 3.
\end{equation*}
For each index the spectrum is computed on a grid of size $M\times N$.
We store all indices in a three-dimensional matrix of size $M\times N \times \eta$.
The first both components refer to the $\omega_2$ and $\omega_1$ coordinates and the third component is the respective index.
Consequently,
an image $f$ of size $M\times N$ is oversampled to an image of size $M\times N\times \eta$.
In particular we have an oversampling factor of $\eta$. 
The following table shows $\eta$ for $j_0=1,\ldots,4$:
\begin{center}
  \begin{tabular}{l|c|c|c|c}
    $j_0$  & 1 &  2 &  3 &  4  \\\hline
    $\eta$ & 5 & 13 & 29 & 61 
  \end{tabular}.
\end{center}
Note that $j_0$ is the \emph{number} of scales, the highest scale parameter $j$ is always $j_0-1$, i.e., we have the scale parameters $0,\ldots,j_0-1$. 
The function \lstinline{helper/shearletScaleShear}
provides various possibilities to compute the index $i$ from $a$ and $s$ or from $j$ and $k$ and vice versa. 
See the documentation inside the file for more information.

Figure~\ref{fig:shearlet_stack} shows shearlet coefficients and spectra stored as the described stack.
\begin{figure}[htbp]
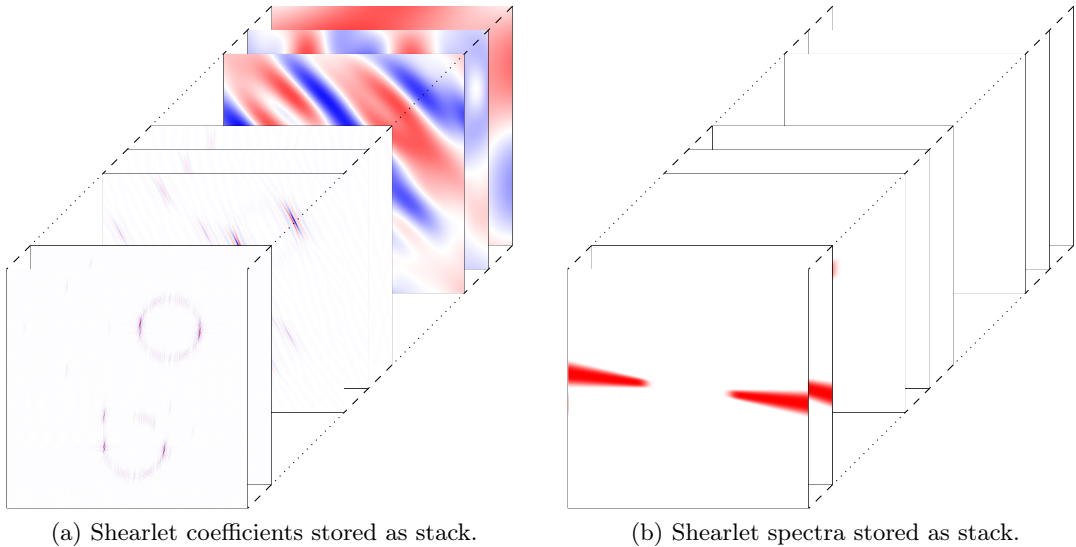

  \centering
  \subfloat[Shearlet coefficients stored as stack.]{\label{fig:shearlet_coefficients_stack}
    \input{images/indexing_coefficients.tikz}
  }
  \hspace{0.2cm}
  \subfloat[Shearlet spectra stored as stack.]{\label{fig:shearlet_spectra_stack}
    \input{images/indexing_spectra.tikz}
  }
  \caption{Shearlets and shearlet coefficients stored as stack.}
  \label{fig:shearlet_stack}
\end{figure}
A useful overview over the shearlet coefficients $c\in\RR^{M\times N\times \eta}$
is the mean value for each for each index $i$ over all translations, i.e., we compute the vector $d\in\RR^\eta$ as
\begin{equation}\label{eq:shearlet_meanvalue}
  d(i)
=
  \frac{1}{MN}
  \sum_{m=1}^M
  \sum_{n=1}^N
  \lvert c(m,n,i) \rvert,
\quad
  i=1,\ldots,\eta.
\end{equation}
For the test image in Figure~\ref{fig:formen} $d$ is shown in Figure~\ref{fig:shearlet_meanvalue}.
The dashed vertical lines represent the different scales, starting with the low-pass on the left and on the right the finest scale. The horizontal, vertical and diagonal directions are symbolized by a small rotated bar at the horizontal axis. We see that all directions appear with similar value in the second coarsest scale---due to the circle. The small peaks in both finest scale for both diagonal directions are due to the diamond.
\begin{figure}[htbp]
  \centering
  \includegraphics[width=0.8\textwidth]{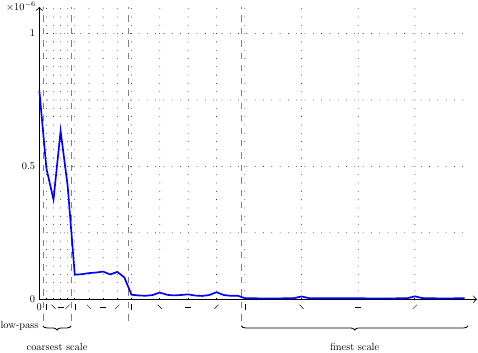}
  \caption{Mean value of shearlet coefficients for each index $i$ (scale and shear).}
  \label{fig:shearlet_meanvalue}
\end{figure}

\subsubsection{Short Documentation}
Every file contained in the package is commented, see there for details on the arguments, return values and examples. 
We only want to comment on the two most important functions.

The transform for an image \lstinline{A}
$\in\RR^{M\times N}$ is called with the following command
\begin{lstlisting}
  [ST,Psi] = shearletTransformSpect(A,numOfScales,realCoefficients)
\end{lstlisting}
where 
\lstinline{numOfscales}
and
\lstinline{realCoefficients}
are optional arguments.
If not given the number of scales $j_0$ is computed from the size of \lstinline{A},
i.e., $j_0 = \lfloor \frac{1}{2}\log_2(\max\{M,N\})\rfloor$
As default real shearlets are computed using 
the shearlet defined in \eqref{eq:Psi1} and \eqref{eq:Psi2}.
On the other hand \lstinline{numOfScales}
can be used two-fold.
If given as a scalar value it simply states the number of scales to consider.
On the other hand we can provide precomputed shearlet spectra which are then used for the computation of the transform.

The variable \lstinline{ST}
contains the shearlet coefficients as a three-dimensional matrix of size $M\times N \times \eta$ with the third dimension ordered as described in section \ref{section:indexing}. 
\lstinline{Psi}
is of same size and contains the respective shearlet spectra $\hat{\psi}_{j,k,0}^\kappa$.

With the additional parameters \lstinline{shearletSpect}
and \lstinline{shearletArg}
other shearlets can be used to compute the spectra.
Included in the software is \lstinline{meyerShearletSpect}
as default shearlet (based on \eqref{eq:Psi1} and \eqref{eq:Psi2}) and \lstinline{meyerSmoothShearletSpect}
for the new smooth construction (see \eqref{eq:smoothPsi}).
The value of the parameters are strings or directly the respective function handle.
To compute shearlet coefficients using the smooth shearlets, call
\begin{lstlisting}
  [ST,Psi] = shearletTransformSpect(A,numOfScales,realCoefficients,'shearletSpect',@meyerSmoothShearletSpect)
\end{lstlisting}
The parameter \lstinline{shearletArg}
can be used in both cases to provide the function handle (or function name as string) of an alternative auxiliary function, see also \lstinline{examples.m}.

The usage of different shearlet spectra is straight forward. 
One the one hand one can simply compute them externally in the matrix \lstinline{Psi} 
and provide them as the parameter \lstinline{numOfScales}. 
On the other hand it is possible to provide an own function '\lstinline{myShearletSpect}'
(with arbitrary name) with the function head
\begin{lstlisting}
  Psi = Psi = meyerShearletSpect( x, y, a, s, realCoefficients, shearletArg, scaling)
\end{lstlisting}
that computes the spectrum \lstinline{Psi} 
for given (meshgrids) \lstinline{x} 
and \lstinline{y} 
for scalar scale \lstinline{a} 
and shear \lstinline{s}
and (optional) parameter \lstinline{shearletArg}. 
For \lstinline{scaling='scaling'} 
it should return the scaling function. 
To obtain a reasonable transform the shearlet should provide a Parseval frame.
To check this just compute (and plot) \lstinline{sum(abs(Psi).^2,3)-1}. 
The values should be close to zero (see Figure~\ref{fig:frameTightness}) and \lstinline{examples.m}. 
Call the shearlet transform with the new shearlet spectrum by setting the parameter \lstinline{sherletSpect} 
to 
\lstinline{@myShearletSpect} 
or whatever you chose as the name of your shearlet function.

Further parameters are \lstinline{realReal}
(default $1$) that guarantees real coefficients, see Section~\ref{section:complex_shearlets_realreal}
and \lstinline{maxScale}
(default \lstinline{'max'})
that controls the size of the finest scale (either \lstinline{'min'}
or
\lstinline{'max'}), 
see Figure~\ref{fig:shearlet_overlap}.

The inverse transform is called with the command
\begin{lstlisting}
  A = inverseShearletTransformSpect(ST,Psi)
\end{lstlisting}
for the shearlet coefficients \lstinline{ST}.
As the second argument the shearlet spectra \lstinline{Psi}
should be provided for faster computations, if not given, the spectra are computed with default values or given parameters (as for \lstinline{shearletTransformSpect.m}).


\subsubsection{Download \& Installation}
The \textsc{Matlab}-Version of the toolbox is available for free download at
\begin{center}
  \url{http://www.mathematik.uni-kl.de/imagepro/software/}
\end{center}
The zip-file contains all relevant files and folders. Simply unzip the archive and add the folder (with subfolders!) to your \textsc{Matlab}-path.

The folder \textbf{FFST} contains the main files for the two directions of the transform. 
The included shearlets are stored in the folder \textbf{shearlets}. 
The folder \textbf{helper} contains some helper functions. 
To create  simple geometric structures some functions are provided in \textbf{create}. 
See \lstinline{contents.m} 
and the comments in each file for more information.

The following listing shows the subdirectories and the respective files
\dirtree{%
.1 FFST/.
.2 create/.
.3 myBall.m.
.3 myPicture.m.
.3 myPicture2.m.
.3 myRhombus.m.
.3 mySquare.m.
.2 helper/.
.3 checkInputs/.
.4 checkCoefficients.m.
.4 checkImage.m.
.4 checkLength.m.
.4 checkNumOfScales.m.
.4 checkShearletSpect.m.
.4 defaultNumberOfScales.m.
.3 parseShearletParameterInputs.m.
.3 scalesShearsAndSpectra.m.
.3 shearletScaleShear.m.
.2 shearlets/.
.3 bump.m.
.3 meyeraux.m.
.3 meyerScaling.m.
.3 meyerShearletSpect.m.
.3 meyerSmoothShearletSpect.m.
.3 meyerWavelet.m.
.2 contents.m.
.2 examples.m.
.2 inverseShearletTransformSpect.m.
.2 shearletTransformSpect.m.
.2 simple\_example.m.
}

If everything is installed correctly run \lstinline{simple_example} for testing. The result should look like Figure~\ref{fig:simple_example}.
\begin{figure}[htbp]
	\centering
	\includegraphics[width=0.5\textwidth]{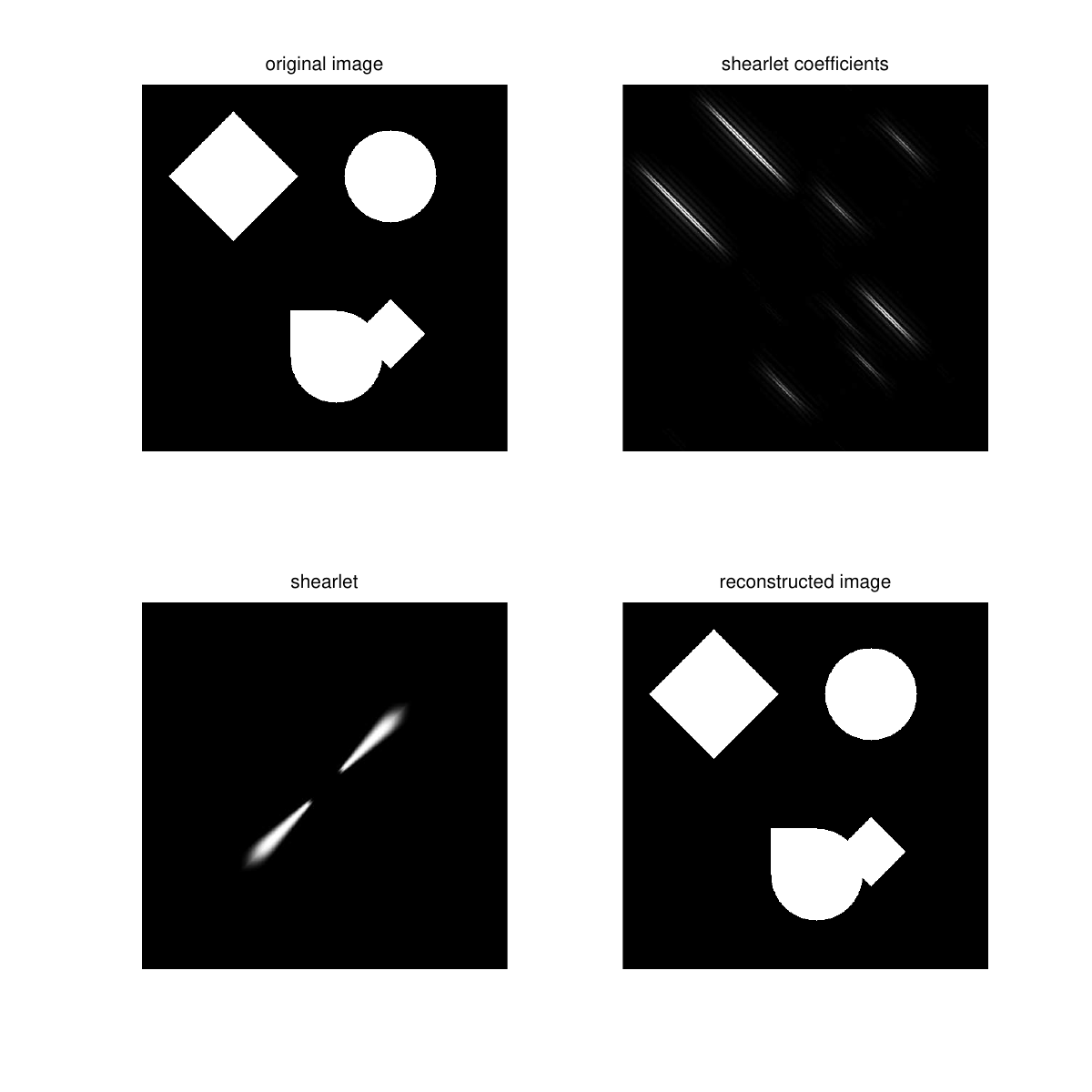}
	\caption{Result of script \lstinline{simple_example}.}
	\label{fig:simple_example}
\end{figure}

\subsection{Performance}
To evaluate the performance and the exactness of our implementation we present the following figures:
In Figure~\ref{fig:frameTightness} we investigate the numerical tightness of the frame.
The figure shows the difference between the square sum of the shearlets and $1$, i.e.,
\begin{equation*}
  \sum_{\kappa\in\{h,v\}}
  \sum_{j=0}^{j_0-1}
  \sum_{k=-2^j}^{2^j}
  \lvert
    \hat{\psi}^\kappa_{j,k,0}
  \rvert^2
  +
  \sum_{j=0}^{j_0-1}
  \sum_{k=\pm 2^j}
  \lvert
    \hat{\psi}^{h\times v}_{j,k,0}
  \rvert^2
  +
  \lvert
    \hat{\phi}_0
  \rvert^2
  -
  1.
\end{equation*}
The largest deviation is about $8\cdot 10^{-15}$ which is $40$ times the machine precision.
Figure~\ref{fig:transformExactness} shows the difference between a random image and after transform and inverse transform,
i.e., the exactness of the forward and backwards transform.
Here the biggest difference is about $2\cdot 10^{-15}$ or approximately 10 times the machine precision.
Surprisingly, this is even better than the tightness of the used frame.
\begin{figure}[htbp]
  \centering
  \subfloat[Frame tightness.]{\label{fig:frameTightness}
  \imageWithRedBlueColorbarExp{0.3\textwidth}{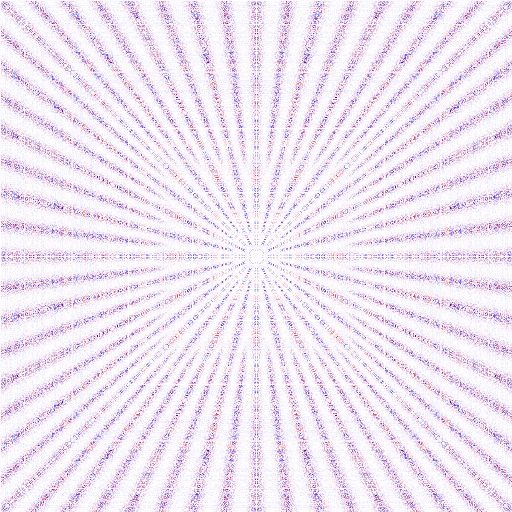}{9.10}{{0.2197,0.4394,0.6591,0.8788}}{-15}}  %
  \hspace{1.5cm}
  \subfloat[Transform exactness.]{\label{fig:transformExactness}
  \imageWithRedBlueColorbarExp{0.3\textwidth}{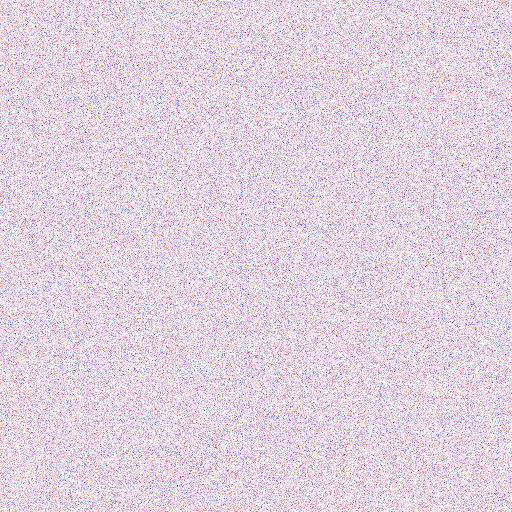}{2.22}{{0.2252,0.4504,0.6755,0.9007}}{-15}}  %
  \caption{Frame tightness and exactness of the implemented shearlet transform.}
  \label{fig:tightnessAndExactness}
\end{figure}

Most of the computation time is needed to precompute the spectra.
Having the spectra the transform and the inverse can be computed efficiently.
The running time of all mentioned implementations including ours is comparable
but depends strongly on the used frequency tiling and thus of the oversampling factor.

\subsection{Remarks}\label{section:Remarks}
\begin{enumerate}[(i)]
  \item 
    In \cite{KSZ12} and the respective implementation \emph{ShearLab}
    a pseudo-polar Fourier transform is used to implement a discrete (or digital) shearlet transform.
    For the dilation $a$ and shear $s$ the same discretization as before is used.
    But for the translation $t$ the authors set $t_{j,k,m}:=A_{a_j,\frac{1}{2}}S_{s_{j,k}}m$
    where we in contrast simply set $t_m:=m$ (see \eqref{eq:discrete_t}).
    Thus, their discrete shearlet becomes
    \begin{equation*}
      \hat{\widetilde{\psi}}_{j,k,m}(\omega)
    =
      \hat{\psi}
      \left(A_{a_j,\frac{1}{2}}S_{s_{j,k}}^\tT\omega\right)
      \e^{-2\pi \i\bigl\langle \omega, A_{a_j,\frac{1}{2}}S_{s_{j,k}}m \bigr\rangle}
    =
      \hat{\psi}
      \left(A_{a_j,\frac{1}{2}}S_{s_{j,k}}^\tT\omega\right)
      \e^{-2\pi \i\bigl\langle S_{s_{j,k}}^\tT A_{a_j,\frac{1}{2}}\omega, m \bigr\rangle}
      .
    \end{equation*}
    Since the operation $S_{s_{j,k}}^\tT A_{a_j,\frac{1}{2}}\omega$ would destroy the pseudo-polar grid a ``slight'' adjustment is made and the exponential term is replaced by
    \begin{equation*}
      \e^{
        -2\pi \i
        \bigl\langle 
          \bigl(\theta\circ S_{s_{j,k}}^{-\tT}\bigr)
          S_{s_{j,k}}^\tT A_{a_j,\frac{1}{2}}\omega, m 
        \bigr\rangle
      }
    \end{equation*}
    with $\theta\colon \RR\setminus\{0\}\times\RR \to \RR\times\RR$ and 
    $\theta(x,y) = (x,\frac{y}{x})$ such that
    \begin{equation*}
      \e^{
        -2\pi \i
        \bigl\langle 
          \bigl(\theta\circ S_{s_{j,k}}^{-\tT}\bigr)
          S_{s_{j,k}}^\tT A_{a_j,\frac{1}{2}}\omega, m 
        \bigr\rangle
      }
    =
      \e^{
        -2\pi \i
        \bigl\langle 
          \bigl(a_j\omega_1, \sqrt{a_j}\frac{\omega_2}{\omega_1}\bigr),
          m 
        \bigr\rangle
      }.
    \end{equation*}
    With this adjustment the last step of the shearlet transform can be obtained with a standard inverse fast Fourier transform
    (similar as in our implementation).
    Unfortunately, this is no longer related to translations of the shearlets in time domain.

  \item 
    We are aware of our larger oversampling factor in comparison with, e.g., \emph{ShearLab}.
    Having four scales we obtain 61 images of the same size as the original image.
    But since shearlets are designed to detect edges in images we like to avoid any down-sampling and keep translation invariance.
    A possibility to reduce the memory usage is to use the compact support of the shearlets in the frequency domain and only compute them on a ``relevant'' region.
    This approach is called \emph{wrapping} in \cite{CDDY06} and used in the respective implementation of CurveLab.
    But we then also have to store the position and size of each region which decreases the memory savings and makes the implementation a lot more complicated.
\end{enumerate}

\subsection{Complex Shearlets} 
\subsubsection{Guaranteeing Real Shearlet Coefficients}\label{section:complex_shearlets_realreal}
\index{shearlet!real}
The inner product between two real-valued functions or vectors is again real-valued. 
However, when computing the shearlet coefficients in the way described above, 
we obtain complex coefficients for the finest scale if the image is of even length.
Figure~\ref{fig:mean_value_imaginary_part_original} shows the mean value of the absolute value of the imaginary part for direction in each scale 
(see Figure~\ref{fig:shearlet_meanvalue} for an explanation of these kind of figures). 
We clearly see that the imaginary part is non-zero for all non-axis aligned directions. 
The reason is that we destroy the intrinsic symmetry of the Fourier coefficients by cutting out the different parts of the Fourier spectra.
\begin{figure}[htbp]
  \centering
  \includegraphics[width=0.6\textwidth]{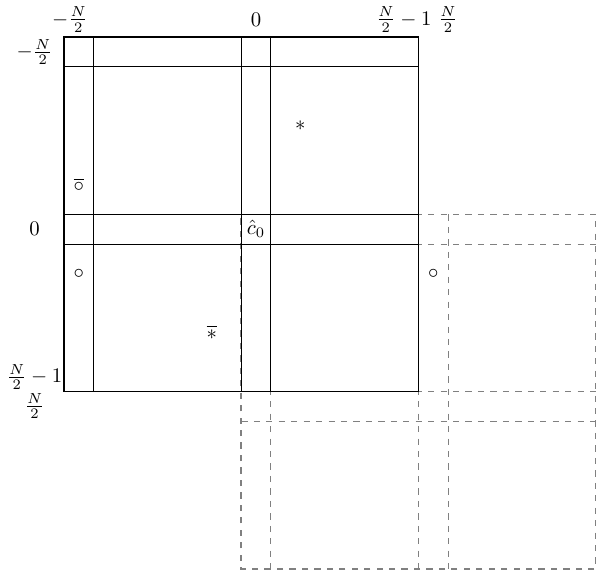}
  \caption{Symmetry of Fourier spectra. The dashed line is the order of the Fourier coefficients provided by \textsc{Matlab}. With \lstinline{fftshift}
  we get the order marked by the solid lines. The point $*$ is an arbitrary point in the interior.
  Due to the symmetry it is complex conjugated to the point $\overline{*}$. The same holds true for the points $\circ$ and $\overline{\circ}$ but they lie on the border of the spectrum.}
  \label{fig:symmetry_Fourier_spectra}
\end{figure}
In Figure~\ref{fig:symmetry_Fourier_spectra} we sketch the Fourier coefficients of a real image of size $N\times N$ where $N$ is even.
The dashed gray square in the lower right part is the output provided by \textsc{Matlab}.
The zero coefficient is stored in the top left corner followed by the positive frequencies up to $\frac{N}{2}-1$ and then the negative frequencies from $\frac{N}{2}$ to $-1$.
By applying \lstinline{fftshift}
the frequencies are swapped such that we obtain the black solid square.
Since the Fourier coefficients are $N$-periodic this can also be seen as shifting the window through the periodic Fourier coefficients.

The Fourier coefficients are symmetric in the following sense: 
Take for example the point symbolized by $*$. 
It has the same real part as the point marked with $\overline{*}$ but with negative imaginary part, i.e., they are complex conjugated. 
This holds true for most of the points.
 
The only points where one has to be careful are those in the first column and the first row since they do not have a corresponding symmetry point. 
As an example we take the point symbolized by $\circ$.
Due to the periodicity of the coefficients $\circ$ appears in the first column but also in the column $\frac{N}{2}$ that does not belong to the image (it only has length $N$). 
But by symmetry the point marked by $\overline{\circ}$ has to be complex conjugated to $\circ$.

By applying the differently oriented (not horizontal or vertical) shearlets we cut out only one of the points $\circ$ or $\overline{\circ}$ such that we loose the symmetry. When we now apply the inverse Fourier transform we get complex shearlet coefficients. 

We propose the following strategy to circumvent this effect:
we modify the shearlets on the finest scale slightly to keep the symmetry. 
Roughly spoken we take the first column (respectively row) of the spectrum of shearlets and mirror it around the central zero axis. 
A complex conjugation is not necessary since we consider only real-valued spectra.
To keep a frame we multiply them by $\frac{1}{\sqrt{2}}$. Figure~\ref{fig:mirroring} illustrates this.
Note that the vertical and horizontal shearlet are kept unchanged.
\begin{figure}[htbp]
  \centering
  \subfloat[Example first row.]{
    \begin{tabular}{|c|c|c|c|c|c|c|c|c|}
    \hline
      8 & 6 & 4 & 2 & 0 & 0 & 0 & 0 & 0 \\
    \hline
    \end{tabular}
  }
  \hspace{0.2cm}
  %
  %
  \subfloat[First row after mirroring and multiplying by $\frac{1}{\sqrt{2}}$.] {
    $\frac{1}{\sqrt{2}}\ \times\ $\begin{tabular}{|c|c|c|c|c|c|c|c|c|}
    \hline
      $8$ & $6$ & $4$ & $2$ & $0$ & $2$ & $4$ & $6$ & $8$ \\
    \hline
    \end{tabular}
    \hspace*{0.2cm}
  }
  \caption{Toy example to illustrate the adjustments to obtain real shearlet spectra.}
  \label{fig:mirroring}
\end{figure}
When we now multiply the complex Fourier coefficients of the given image with the modified spectra, the symmetry is kept.
This leads to real-valued shearlet coefficients or at least to an negligible imaginary part, see Figure~\ref{fig:mean_value_imaginary_part_improved}.

\begin{figure}[htbp]
  \centering
  \subfloat[Mean value of absolute value of the imaginary part of the shearlet coefficients over translations for each scale and shear.]{\label{fig:mean_value_imaginary_part_original}
  \includegraphics[width=0.4\textwidth]{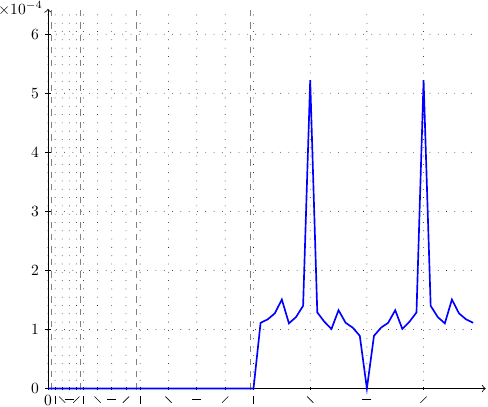}
  \hspace*{0.6cm}}
  \hspace{0.4cm}
  \subfloat[Mean value of absolute value of the imaginary part of the shearlet coefficients over translations for each scale and shear after applying the above described modification.]{\label{fig:mean_value_imaginary_part_improved}
  \includegraphics[width=0.4\textwidth]{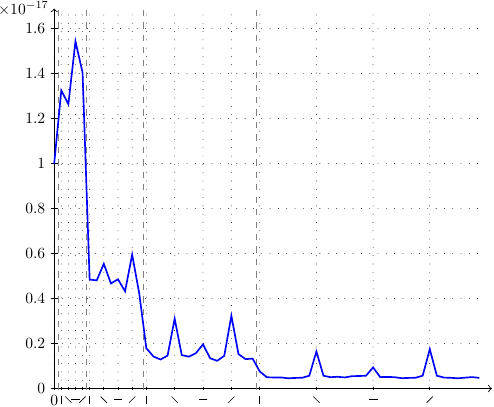}
  \hspace*{0.6cm}}
  \caption{Imaginary part of shearlet coefficients.}
  \label{fig:imaginary_part_shearlet_coefficients}
\end{figure}

\begin{remark}
  Since for odd-sized images the symmetry is always kept, it is also possible to extend the Fourier coefficients by mirroring for even-sized images. 
  But then a FFT of an odd-sized image has to be computed which is in general significantly slower than the one of an even-sized image.
\end{remark}

\subsubsection{Complex Shearlets and Shearlet Coefficients}
In some situations one actually wants \emph{complex shearlets}\index{shearlet!complex} and complex coefficients, in particular for the analysis of the phase of the coefficients (and not only the absolute value).

With our construction complex shearlets (and thus complex shearlet coefficients) can be build straightforward. 
We obtain complex shearlets in time domain by considering one-sided shearlets in frequency domain, see Figure~\ref{fig:complex_shearlet_Fourier}. The resulting shearlets in time domain are shown with their real and imaginary part in Figure~\ref{fig:omplex_shearlets_time}.
The computations and frame properties are the same as in the real-valued case.

\begin{figure}[htbp]
  \centering
  \imageWithBorder{0.4\textwidth}{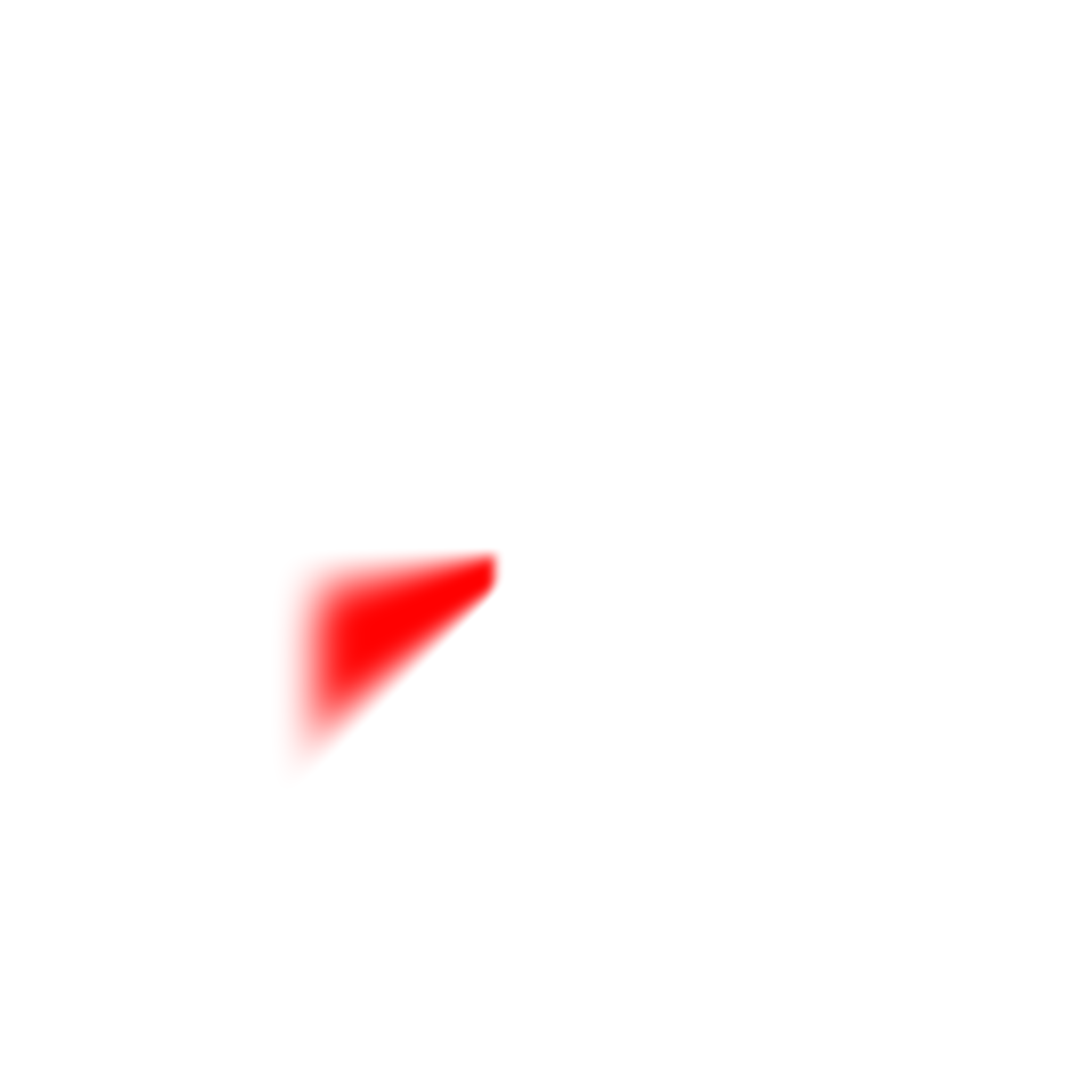}
  \caption{One-sided shearlet in Fourier domain.}
  \label{fig:complex_shearlet_Fourier}
\end{figure}
\begin{figure}[htbp]
  \centering
  \subfloat[Real part of complex shearlet in time domain.]{\label{fig:complex_shearlet_time_realPart}
  \imageWithBorder{0.4\textwidth}{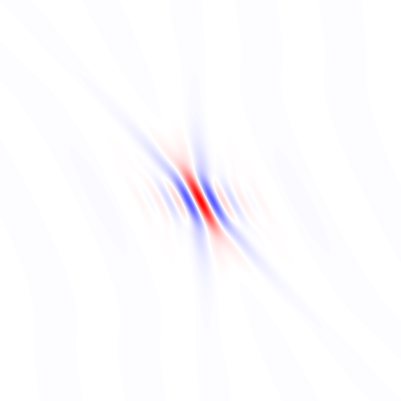}}
  \hspace{0.2cm}
  \subfloat[Imaginary part of complex shearlet in time domain.]{\label{fig:complex_shearlet_time_imagPart}
  \imageWithBorder{0.4\textwidth}{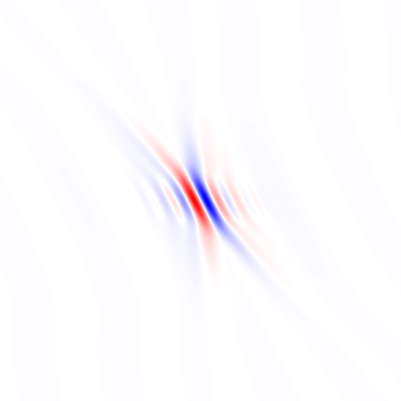}}
\\
  \subfloat[3D-view of real part of complex shearlet in time domain.]{\label{fig:complex_shearlet_time_realPart_3D}
  \imageWithBorder{0.4\textwidth}{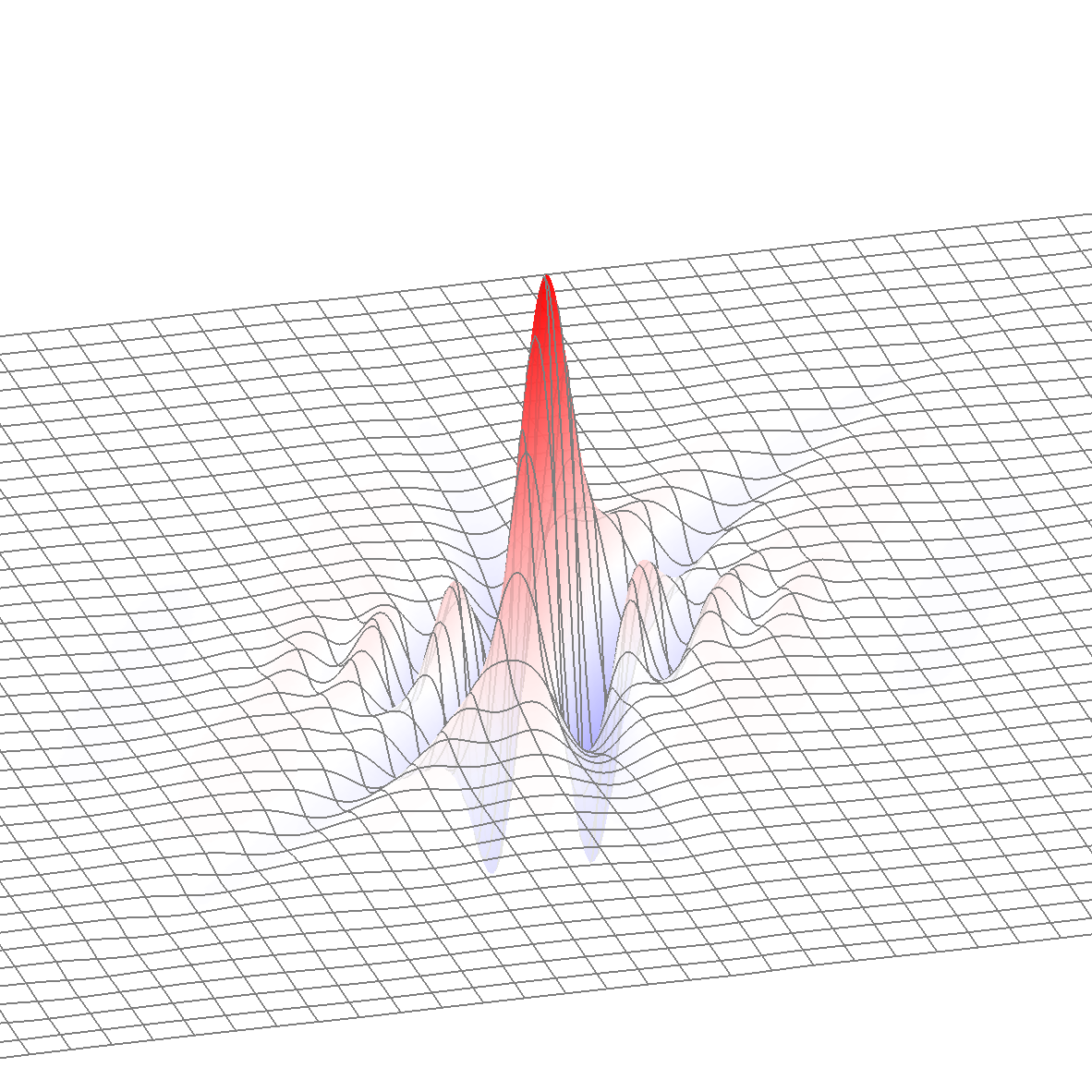}}
  \hspace{0.2cm}
  \subfloat[3D-view of imaginary part of complex shearlet in time domain.]{\label{fig:complex_shearlet_time_imagPart_3D}
  \imageWithBorder{0.4\textwidth}{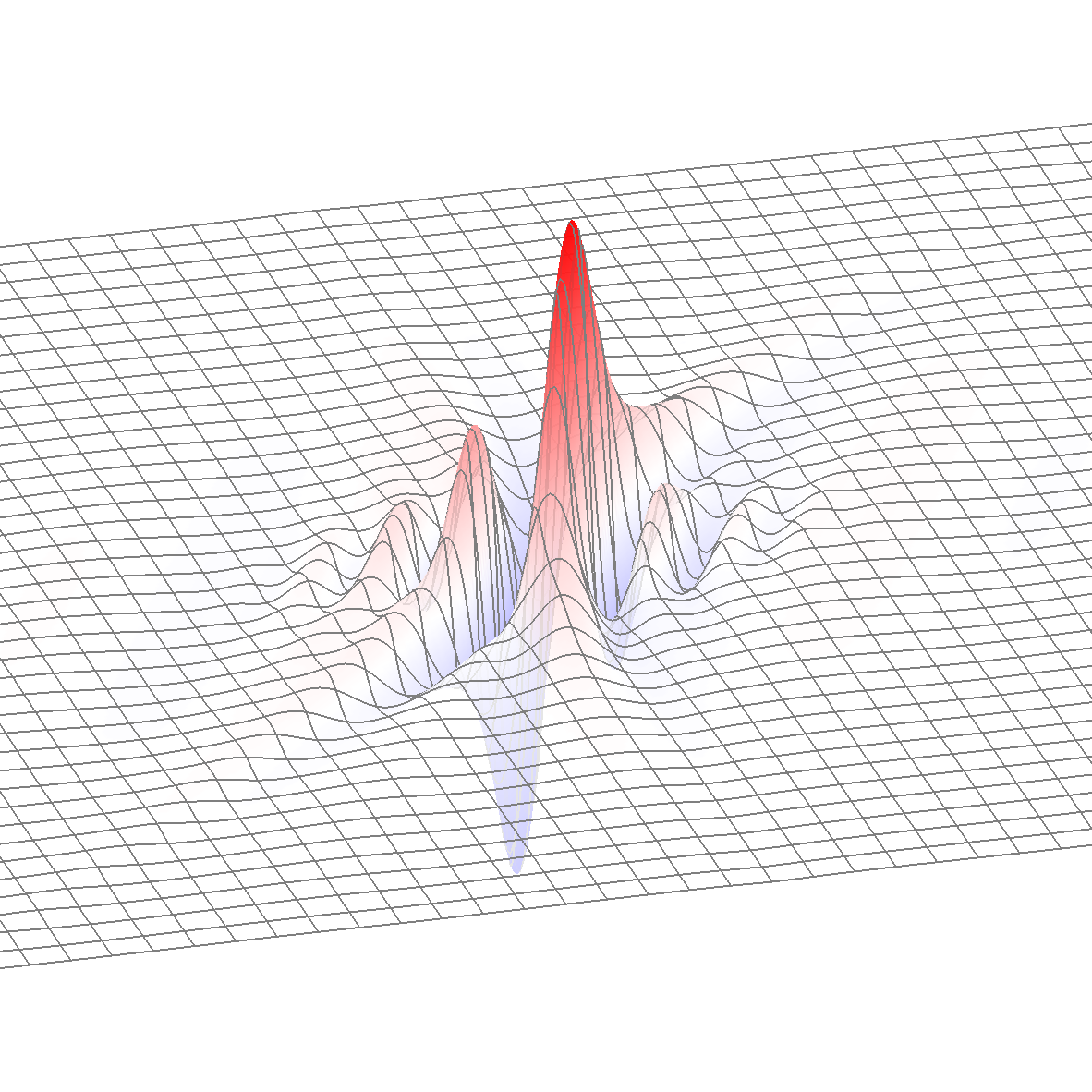}}
  \caption{Complex shearlet in time domain.}
  \label{fig:omplex_shearlets_time}
\end{figure}
\section*{Acknowledgement}
The first author thanks Tomas Sauer (University of Gie\ss{}en) for his support at the beginning of this project.

\bibliographystyle{abbrv}
\bibliography{../diss/latex/bib/extracted}

\end{document}